\theoremstyle{plain}
\newtheorem{theorem}{Theorem}[section]
\newtheorem{proposition}[theorem]{Proposition}
\newtheorem{lemma}[theorem]{Lemma}
\newtheorem{corollary}[theorem]{Corollary}
\newtheorem{conjecture}[theorem]{Conjecture}
\theoremstyle{definition}
\newtheorem{definition}[theorem]{Definition}
\newtheorem*{theorem*}{Theorem}
\newtheorem*{proposition*}{Proposition}
\newtheorem*{lemma*}{Lemma}
\theoremstyle{remark}
\newtheorem{remark}[theorem]{Remark}
\numberwithin{equation}{section}
\newcommand{\FF}{\mathbb F}
\newcommand{\QQ}{\mathbb Q}
\newcommand{\OO}{\mathcal O}
\newcommand{\ZZ}{\mathbb Z}
\newcommand{\on}{\operatorname}
\newcommand{\ol}{\overline}
\newcommand{\wt}{\widetilde}
\newcommand{\wh}{\widehat}
\newcommand*{\defeq}{\mathrel{\rlap{%
                     \raisebox{0.3ex}{$\m@th\cdot$}}%
                     \raisebox{-0.3ex}{$\m@th\cdot$}}%
                     =}
\newcommand{\id}{\on{id}}
\newcommand\ab{\on{ab}}
\newcommand{\Hom}{\on{Hom}}
\newcommand{\hyperell}{\mathscr{X}}
\newcommand{\hyperellSource}{\mathscr{C}}
\newcommand\hyperelliptic[2]{\mathscr X_{#1, #2}} % moduli stack of hyperelliptic curves of genus g over field K, takes in genus g, field K
\newcommand\hyperellipticSource[2]{\mathscr C_{#1, #2}} % universal family of hyperelliptic curves of genus g over field K, takes in genus g, field K
\newcommand\grouphyp[2]{\widetilde{\mathrm{G} \mathcal {S}}_{2{#1}+2, #2}} % group of hyperelliptic monodromy for degree 2g+2 polynomials, takes in genus g, field K
\newcommand\grouphypboth[3]{\widetilde{\mathrm{G} \mathcal {S}}_{2{#1}+2-{#2}, #3}} % convenient notation for previous two groups together (both S_{2g+1} and S_{2g+2}), takes in genus g, shift index i (either 1 or 2), field K
\newcommand\standardFamily[3]{\mathscr Y_{#1, #3}^{(#2)}} %notation for the source of the standard families of hyperelliptic curves, takes in genus g, index i (from 1 to 4), field K
\newcommand\standardTarget[3]{\mathscr W_{#1, #3}^{(#2)}} %notation for the target of the standard families of hyperelliptic curves, takes in genus g, index i (from 1 to 4), field K
\newcommand{\gify}[2]{\mathrm{G}\Phi_{#1 \to #2}}
\newcommand{\fy}[2]{\Phi_{#1 \to #2}}
\newcommand{\hyphat}{\wt{\mathcal{S}}} %Adelic preimage of S_2g+2 in Sp
\newcommand{\hypaddict}{\mathcal S} %2-adic preimage of S_2g+2 in Sp
\newcommand\bc{{\mathbb C}}
\newcommand\bq{{\mathbb Q}}
\newcommand\bz{{\mathbb Z}}
\newcommand\ba{{\mathbb A}}
\newcommand{\mc}{\mathcal}
\newcommand{\zh}{\widehat{\mathbb Z}}
\DeclareMathOperator\Sp{Sp}
\newcommand\mono{H}
\DeclareMathOperator\GSp{GSp}
\DeclareMathOperator\GL{GL}
\DeclareMathOperator\Gal{Gal}
\DeclareMathOperator\mult{mult}
\DeclareMathOperator\disc{disc}
\newcommand \ra{\rightarrow}
\newcommand{\mf}{\mathfrak}
\DeclareMathOperator\charpoly{ch}
\newcommand{\frob}{\operatorname{Frob}}
\def\@bignumber#1#2{%
  \ifx#2\end
    #1\let\next\@gobble
  \else
    #1\hspace{0pt plus 1pt}\let\next\@bignumber
  \fi
  \next#2}
\newcommand{\bignumber}[1]{\@bignumber#1\end}
\newcommand{\spec}{\on{Spec}}
\newcommand{\cyc}{\on{cyc}}
\newcommand{\customlabel}[2]{%
   \protected@write \@auxout {}{\string \newlabel {#1}{{#2}{\thepage}{#2}{#1}{}} }%
   \hypertarget{#1}{#2}
}
\title[Hyperelliptic Curves with Maximal Galois Action on Torsion of Jacobian]{Hyperelliptic Curves with Maximal Galois Action on the Torsion Points of their Jacobians}
\date{\today}
\author[Aaron Landesman]{Aaron Landesman}
\author[Ashvin A. Swaminathan]{Ashvin A. Swaminathan}
\author[James Tao]{James Tao}
\author[Yujie Xu]{Yujie Xu}
\begin{document}

\begin{abstract}
In this article, we show that in each of four standard families of hyperelliptic curves, there is a density-$1$ subset of members with the property that their Jacobians have adelic Galois representation with image as large as possible. This result constitutes an explicit application of a general theorem on arbitrary rational families of abelian varieties to the case of families of Jacobians of hyperelliptic curves.
Furthermore, we provide explicit examples
of hyperelliptic curves of genus $2$ and $3$ over $\mathbb Q$
whose Jacobians have such maximal adelic Galois representations.
\end{abstract}

\maketitle

\section{Introduction}
\subsection{Background}
\label{subsection:intro-background}
Let $A$ be a principally polarized abelian variety (PPAV) of dimension $g \geq 1$
over a number field $K$.
Fix an algebraic closure $\ol{K}$ of $K$, and let $G_K \defeq \Gal(\ol{K}/K)$ be the absolute Galois group.
The action of $G_K$ on the torsion points of $A(\ol{K})$ gives rise to the
\emph{adelic} Galois representation
$$\rho_A \colon G_K \to\GSp_{2g}(\wh{\ZZ}).$$
For prime numbers $\ell$, the \emph{mod-$\ell$ Galois representation} $\rho_{A,\ell} \colon G_K \to \GSp_{2g}(\bz/\ell\bz)$ is defined by reducing the image of $\rho_A$ modulo $\ell$.
See~\cite[Section 2.2]{seaweed} and
\cite[Section 3.1]{landesman-swaminathan-tao-xu:rational-families} for more detailed descriptions of these representations.

In 1972, Jean-Pierre Serre proved the celebrated Open Image Theorem (see~\cite{causalrelationship}), which states that for an elliptic curve $E/K$ without complex multiplication, $\rho_E(G_K)$ is an open subgroup of, and hence has finite index in, the profinite group $\GSp_{2}(\wh{\ZZ})$.
While the Open Image Theorem implies that the adelic Galois representation maps onto a large subgroup of $\GSp_{2g}(\wh{\ZZ})$, the image of this representation is not always equal to $\GSp_{2g}(\wh{\ZZ})$.
Indeed, Serre observed in~\cite[Proposition 22]{causalrelationship} that for every elliptic curve $E/\QQ$, the image of $\rho_E$ has even index in $\GSp_2(\wh{\ZZ})$.
Nonetheless, in~\cite[Sections 5.5.6-8]{causalrelationship},
Serre constructs several examples of elliptic curves over $\bq$
whose Galois representations have ``maximal image'' among all elliptic curves, in the sense that the index of the \mbox{image in $\GSp_2(\widehat{\mathbb Z})$ is equal to $2$.}

The obstruction faced by elliptic curves over $\QQ$ to having surjective adelic Galois representation no longer exists when $\QQ$ is replaced by a larger number field.
In~\cite{greasy}, Greicius constructs an example of an elliptic curve over a cubic extension of $\bq$ whose Galois representation
has image equal to $\GSp_2(\widehat{\mathbb Z})$.
Furthermore, in~\cite{seaweed}, Zywina constructs an example of a non-hyperelliptic curve of genus $3$ over $\bq$ whose Jacobian has adelic Galois image equal to $\GSp_6(\widehat{\mathbb Z})$.
While there are explicit examples in genera $1$ and $3$, to the authors' knowledge,
there are no examples in the literature of curves of genus
$2$ with associated Galois representation having maximal image among such curves.
Additionally, there are no known examples of hyperelliptic curves of genus $3$ whose Galois image is maximal.
Nevertheless, there are a few examples that come close:
	In~\cite[Theorem 5.4]{dooleyfat},
	Dieulefait gives an example of a
	genus-$2$ curve over $\mathbb Q$
	whose Jacobian has mod-$\ell$
	monodromy equal to $\GSp_4(\mathbb Z/\ell \mathbb Z)$
	for $\ell \ge 5$.
	Similarly, in~\cite[Corollary 1.1]{anni2016residual},
	an example of a genus-$3$ hyperelliptic curve
	over $\bq$ whose Jacobian has mod-$\ell$ Galois image equal to $\GSp_6(\mathbb Z/\ell \mathbb Z)$
	for primes $\ell \geq 3$ is constructed.
	However, in both of these cases, it is easy to check
	that these examples have mod-2
	Galois image that is not maximal
	among all hyperelliptic curves of genus
	$2$ or $3$.
	In Theorem~\ref{exemplinongratia},
	we improve on the results of~\cite{dooleyfat} and~\cite{anni2016residual},
	giving explicit examples of hyperelliptic
	curves of genus $2$ and $3$ over $\QQ$ with maximal adelic Galois image.
	The reader may wish to also refer to the related recent paper~\cite{anni2017constructing}, which constructs
	hyperelliptic curves with maximal mod-$\ell$ Galois image in all genera $g$ with the property that $2g+2$ can be expressed as of sum of two primes in two different ways, with none of the primes being the largest prime less than $2g + 2$.\footnote{Note that~\cite{anni2017constructing}
	therefore does not address the cases $g = 2, 3$,
	which we cover in this paper.}

In addition to finding explicit examples of PPAVs with maximal Galois image, there are a number of results in the literature concerning how many members of a given family of PPAVs have maximal adelic Galois image.
The first key result in this direction is due to Duke, who proved in~\cite{duke:elliptic-curves-with-no-exceptional-primes} that ``most'' elliptic curves $E/\QQ$ in the standard family with Weierstrass equation $y^2 = x^3 + ax + b$ have the property that $\rho_{E,\ell}(G_{\QQ}) = \GSp_2(\ZZ/\ell \ZZ)$ for every prime number $\ell$; here, the term ``most'' means a density-$1$ subset of curves ordered by na\"{i}ve height. Building upon the work of Duke, Jones proved in~\cite[Theorem 4]{josofabank} that $[\GSp_{2g}(\wh{\ZZ}) : \rho_E(G_K)] = 2$ for most elliptic curves $E$ in the standard family over $\QQ$.
In~\cite[Theorem 1.15]{zywina2010hilbert}, Zywina generalized
the above results, showing that most members of every non-isotrivial rational family of elliptic curves over an arbitrary number field have maximal adelic Galois image, subject to the constraints that arise from the arithmetic and geometric properties of the family under consideration.
Additional results over $\mathbb Q$ were
obtained in~\cite{grant:a-formula-for-the-number-of-elliptic-curves-with-exceptional-primes},~\cite{cojocaruH:uniform-results-for-serres-theorem-for-elliptic-curves}, and~\cite{cojocaruGJ:one-parameter-families-of-elliptic-curves}
(see~\cite[p.~6]{zywina2010hilbert} for a more detailed overview).
In Theorem~\ref{mainbldg}, we give an explicit version of~\cite[Theorem 1.1]{landesman-swaminathan-tao-xu:rational-families} -- a result that generalizes Zywina's results to rational families of higher-dimensional PPAVs --
for many common families of hyperelliptic curves.
This yields a generalization of \cite[Theorem 1.2]{zywina2010elliptic}
and~\cite[Theorem 4]{josofabank} to hyperelliptic curves of higher genus.

\subsection{Main Results}

In this paper, we primarily consider those PPAVs that arise as Jacobians of hyperelliptic curves belonging to one of the following four standard families; we restrict our consideration to curves of genus at least $2$ because the results of Zywina in~\cite{zywina2010hilbert} completely handle the case of elliptic curves.
\begin{definition}
	\label{definition:standard-families}
Let $g \geq 2$ be an integer, and for $i \in \{1, 2, 3, 4 \}$ define $\standardTarget g i K$ by
\begin{align*}
&\standardTarget g 1 K = \mathbb A^{2g+1}_{[a_0, \ldots, a_{2g}]} \setminus \Delta^{(1)}, \,\,\,\,\, \standardTarget g 2 K = \mathbb A^{2g+2}_{[a_0, \ldots, a_{2g+1}]}\setminus \Delta^{(2)}, \\
&\,\,\,\,\,  \standardTarget g 3 K = \mathbb A^{2g}_{[a_0, \ldots, a_{2g-1}]}\setminus \Delta^{(3)}, \,\,\,\,\,  \standardTarget g 4 K = \mathbb A^{2g+1}_{[a_0, \ldots, a_{2g}]} \setminus \Delta^{(4)},
\end{align*}
where each $\Delta^{(i)}$ is the discriminant locus, on which the indicated polynomial has at least one multiple root:
\[
\begin{array}{cc}
x^{2g+1} + a_{2g}x^{2g} + \cdots + a_0  &\rightsquigarrow  \Delta^{(1)}  \\
x^{2g+2} + a_{2g+1}x^{2g+1} + \cdots + a_0 & \rightsquigarrow  \Delta^{(2)}\\
x^{2g+1} + a_{2g-1}x^{2g-1} + \cdots + a_0  &\rightsquigarrow \Delta^{(3)}\\
x^{2g+2} + a_{2g}x^{2g} + \cdots + a_0 &\rightsquigarrow \Delta^{(4)}.
\end{array}
\]
Consider the following vanishing loci, and view them as families over $\standardTarget g i K$ via projection onto the first factor:
\begin{align*}
V(y^2 - x^{2g+1} - a_{2g}x^{2g} - \cdots - a_0) &\hookrightarrow \standardTarget g 1 K \times \mathbb A^2_{[x,y]} \to \standardTarget g 1 K, \\
V(y^2 - x^{2g+2} - a_{2g+1}x^{2g+1} - \cdots - a_0) &\hookrightarrow \standardTarget g 2 K \times \mathbb A^2_{[x,y]} \to \standardTarget g 2 K,  \\
V(y^2 - x^{2g+1} - a_{2g-1}x^{2g-1} - \cdots - a_0) &\hookrightarrow \standardTarget g 3 K\times \mathbb A^2_{[x,y]} \to  \standardTarget g 3 K, \\
V(y^2 - x^{2g+2} - a_{2g}x^{2g} - \cdots - a_0) &\hookrightarrow \standardTarget g 4 K \times \mathbb A^2_{[x,y]} \to \standardTarget g 4 K.
\end{align*}
For $1 \leq i \leq 4$, define $\standardFamily g i K$, the \emph{standard families} of genus-$g$ hyperelliptic curves by completing the above smooth affine
curve over $\standardTarget g i K$ to a smooth projective curve over $\standardTarget g i K$. The definition of $\Delta^{(i)}$ ensures that these are indeed genus-$g$ hyperelliptic curves.
For a $K$-valued point $u \in \standardTarget g i K (K)$, we denote by $A_u$ the Jacobian (which is necessarily a $g$-dimensional PPAV) of the fiber over $u$ of the corresponding standard family.
\end{definition}

As we show in Section~\ref{symbed},
the mod-$2$ Galois image of the Jacobian of a member of $\standardFamily{g}{i}{K}$ always lands in a certain copy of the symmetric group $S_{2g+2-(i \bmod 2)} \subset \Sp_{2g}(\ZZ/2\ZZ)$. Denote by $\grouphypboth{g}{(i \bmod 2)}{K}$ the intersection of the following two subgroups of $\GSp_{2g}(\wh{\ZZ})$: (1) the subgroup of those matrices with multiplier landing in $\chi(K) \subset \wh{\ZZ}^\times$, where $\chi$ denotes the cyclotomic character, and (2) the preimage of $S_{2g+2-(i \bmod 2)}$ under the projection map $\GSp_{2g}(\wh{\ZZ}) \to \Sp_{2g}(\ZZ/2\ZZ)$.
Let $\on{Ht}\colon\mathbb P^r(\overline K) \rightarrow \mathbb R_{>0}$
denote the absolute multiplicative height on projective space, and define a height function $\| - \|$ on the lattice $\mathcal O^r_K$
sending $\left( t_1, \ldots, t_r \right) \mapsto \max_{\sigma,i}|\sigma(t_i)|$,
where $\sigma$ varies over all field embeddings $\sigma\colon K \hookrightarrow \mathbb C$.
Having fixed this notation, our first main theorem may be stated as follows:

\begin{theorem}\label{mainbldg}	
Let $B> 0$, $i \in \{1,2,3,4\}$, $g \geq 2$, and let $n$ be an arbitrarily positive integer. Let $\delta_\QQ = 2$, and let $\delta_K = 1$ for $K \neq \QQ$. Then $[ \grouphypboth g {(i \bmod 2)}{K} : \rho_{A_u}(G_K) ] \geq \delta_K$
for all $u \in \standardFamily g i K(K)$,
and we have the following asymptotic statements, with the implied constants depending only on $n$, $g$, and $K$:
			\[
				\frac{|\{u \in \standardTarget g i K(\OO_K) : \lVert u \rVert \le B,\, [ \grouphypboth g {(i \bmod 2)}{K} : \rho_{A_u}(G_K) ] = \delta_K \}|}{|\{u \in \standardTarget g i K(\OO_K): \lVert u \rVert \le B\}|} = 1 + O((\log B)^{-n}),
			\]
\[
	\frac{|\{u \in \standardTarget g i K(K) : \on{Ht}(u) \leq B,\, [ \grouphypboth g {(i \bmod 2)}{K} : \rho_{A_u}(G_K) ] = \delta_K \}|}{|\{u \in \standardTarget g i K(K) : \on{Ht}(u) \le B\}|} = 1 + O((\log B)^{-n}).
			\]
\noindent Furthermore, the statement above applies
if we take $i = 2$ and replace
$\standardTarget g 2 K$ by any rational family
of hyperelliptic curves
dominating the moduli of hyperelliptic curves,
so long as the map to the moduli of hyperelliptic
curves has geometrically connected generic fiber.
\end{theorem}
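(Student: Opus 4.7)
The strategy is to reduce Theorem~\ref{mainbldg} to \cite[Theorem 1.1]{landesman-swaminathan-tao-xu:rational-families}, which asserts that any rational family of PPAVs with sufficiently large geometric monodromy satisfies the density-$1$ conclusion with quantitative error $1 + O((\log B)^{-n})$ in both the integral and projective-height orderings. Two things must be verified: (i) the maximal possible adelic Galois image for members of $\standardFamily{g}{i}{K}$ is exactly $\grouphypboth{g}{(i \bmod 2)}{K}$, with the stated index $\delta_K$; and (ii) the geometric monodromy of each standard family achieves the maximum compatible with (i).

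For (i), the Weil pairing forces the symplectic similitude multiplier of $\rho_{A_u}$ to equal the cyclotomic character, giving the first defining condition of $\grouphypboth{g}{(i \bmod 2)}{K}$. The second condition -- that the mod-$2$ image lies in $S_{2g+2-(i \bmod 2)}$ -- is established in Section~\ref{symbed} via the permutation action on Weierstrass points; for odd-degree defining polynomial ($i \in \{1, 3\}$) the point at infinity is a fixed Weierstrass point, cutting the symmetric group down to $S_{2g+1}$. The extra factor $\delta_\QQ = 2$ for $K = \QQ$ reflects a Serre-type obstruction: the quadratic character cut out by the sign of the mod-$2$ image coincides with the extension $\QQ(\sqrt{\disc(f_u)})$, and since this abelian quadratic extension of $\QQ$ lies in a cyclotomic field, it produces a nontrivial congruence with the cyclotomic character that forces the index to be at least $2$. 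For (ii), the classical result (A'Campo in the topological setting, adapted to the étale setting) is that the geometric monodromy of the universal family of smooth genus-$g$ hyperelliptic curves over $\ol{K}$ is the full preimage in $\Sp_{2g}(\wh{\mathbb Z})$ of $S_{2g+2} \subset \Sp_{2g}(\mathbb Z/2\mathbb Z)$; it then suffices to check that each standard family dominates the moduli of hyperelliptic curves with geometrically connected generic fiber, which is immediate for $i \in \{1, 2\}$ via the Weierstrass form, and for $i \in \{3, 4\}$ follows by observing that imposing the vanishing of the subleading coefficient normalizes the residual translation action in the $x$-variable.

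For the final sentence of the theorem, exactly the same argument goes through: any rational family dominating the moduli of hyperelliptic curves with geometrically connected generic fiber inherits the maximal geometric monodromy by pullback, and the arithmetic constraints established in (i) depend only on the underlying family of hyperelliptic curves, so the hypotheses of \cite[Theorem 1.1]{landesman-swaminathan-tao-xu:rational-families} are met in the same way. I expect the subtlest step to be the precise identification of the Serre-type obstruction yielding $\delta_\QQ = 2$ in higher genus, which requires pinning down how the discriminant quadratic character embeds into the cyclotomic tower and how this interacts with the mod-$2^k$ image as $k$ grows; the remaining ingredients -- monodromy of the universal family and dominance of each standard family onto moduli -- are largely bookkeeping.
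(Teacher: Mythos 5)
Your overall strategy coincides with the paper's: both proofs run everything through \cite[Theorem 1.1]{landesman-swaminathan-tao-xu:rational-families} (restated as Theorem~\ref{theorem:main}), verify big monodromy for the standard families, and identify the constant $\delta_K$. But there is a genuine gap at the step you yourself flag as ``subtlest.'' In Theorem~\ref{theorem:main}, $\delta_\QQ$ is \emph{defined} as the index of the closure of the commutator subgroup of $\mono_A$ inside $\mono_A \cap \Sp_{2g}(\zh)$, so proving $\delta_\QQ = 2$ means computing that $\overline{[\grouphypboth g {(i \bmod 2)}{\QQ}, \grouphypboth g {(i \bmod 2)}{\QQ}]}$ has index exactly $2$ in $\hyphat_{2g+2-(i \bmod 2)}$. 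Your ``Serre-type obstruction'' discussion (the discriminant quadratic character lying in the cyclotomic tower) only explains the per-curve lower bound $[\grouphypboth g {(i \bmod 2)}{\QQ} : \rho_{A_u}(G_\QQ)] \geq 2$; it does not give the crucial upper bound, namely that the closed commutator is as large as possible -- the full preimage of the alternating group $A_{2g+2-(i\bmod 2)}$ in $\Sp_{2g}(\ZZ_2)$ times $\prod_{\ell \geq 3}\Sp_{2g}(\ZZ_\ell)$. Without that, the density-$1$ statement with \emph{equality} $=\delta_K$ does not follow. This is exactly the content of the paper's Theorem~\ref{theorem:small-ab} (a nontrivial $2$-adic commutator computation, carried out via the Lie algebra $\mf{gsp}_{2g}(\ZZ/2\ZZ)$ and the explicit action of the transpositions in $S_{2g+1}$), together with Lemma~\ref{theorem:r=2} and Corollary~\ref{theorem:r=2:special}; your proposal defers this entirely.

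A secondary problem is your treatment of the monodromy of the odd-degree families. You correctly note in part (i) that for $i \in \{1,3\}$ the rational Weierstrass point at infinity forces the mod-$2$ image into $S_{2g+1}$, but your part (ii) argues by pulling back the monodromy of the universal family (full preimage of $S_{2g+2}$) along a map to moduli with geometrically connected generic fiber -- an argument that, if it applied to $\standardFamily g 1 K$, would yield mod-$2$ monodromy $S_{2g+2}$, contradicting the constraint you just stated. The odd-degree families do not map to $\hyperelliptic g K$ with geometrically connected generic fiber, and the paper instead invokes A'Campo's theorem directly for $i \in \{1,2\}$ (which covers both the $S_{2g+1}$ and $S_{2g+2}$ cases) and reduces $i \in \{3,4\}$ to $i \in \{1,2\}$ by an explicit deformation retract (equivalently, your translation-normalization idea made precise in Lemma~\ref{lemma:monodromy-std}); the pullback-from-moduli argument is reserved for the final sentence of the theorem, where geometric connectedness of the generic fiber is a hypothesis.
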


The methods employed to prove density-$1$ results like Theorem~\ref{mainbldg} do not lend themselves well to the construction of explicit examples, which may be useful insofar as they can provide evidence in support of related conjectures.
We now give two explicit examples, improving upon the examples of \cite{dooleyfat} and \cite{anni2016residual} mentioned in Section~\ref{subsection:intro-background}.
To the authors' knowledge,
these are the first examples of hyperelliptic curves in genus $g = 2$ and $3$
whose mod-$\ell$ monodromy is equal to
$\GSp_{2g}(\bz/\ell\bz)$ when $\ell > 2$,
and equal to $S_{2g+2}$ when $\ell = 2$.
Moreover, we show the Galois representations of
these curves have index $2$ in the group
$\grouphyp g {\mathbb Q}$. Note that all hyperelliptic
curves over $\bq$ have Galois representation
strictly contained in $\grouphyp g {\mathbb Q}$,
as follows from
Corollary~\ref{monodromy-stack} (since the monodromy of any curve
is contained in that of the universal family).
Hence, our examples yield curves with
maximal monodromy among all hyperelliptic curves
of genus $2$ and $3$.

\begin{theorem}\label{exemplinongratia}
Let $C_2$ and $C_3$ over $\QQ$ be smooth projective models of the affine plane curves cut out by the equations
\begin{align*}
C_2 & : \quad y^2 = x^6 + 7471225x^5 + 16548721x^4 + 6639451x^3 + 16857421x^2 + \\
& \qquad \qquad \qquad 20754195x +9508695, \text{ and} \\
C_3 & : \quad y^2 = x^8 + 10781051650x^7 + 5302830080x^6 + 33362176x^5 + 10656581376x^4  + \\
& \qquad \qquad \qquad 5522318080x^3 + 4238752256x^2  + 3613465600x  + 3725404480.
\end{align*}
Then for each $g \in \{2, 3\}$, the Jacobian $J_{C_g}$ of $C_g$ is a $g$-dimensional PPAV over $\QQ$ satisfying the condition $[\grouphyp g {\mathbb Q} : \rho_{J_{C_g}}(G_\QQ)] = 2$. \end{theorem}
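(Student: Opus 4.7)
The strategy is to verify maximality of the mod-$\ell$ image for each prime $\ell$ and assemble these into the adelic statement via a Goursat-style independence argument. Since the lower bound $[\grouphyp g {\mathbb Q} : \rho_{J_{C_g}}(G_{\mathbb Q})] \geq 2$ is already furnished by Corollary~\ref{monodromy-stack}, the task is to prove the matching upper bound, which I would reduce to showing: (i) the mod-$2$ image is the full symmetric group $S_{2g+2}\subset\Sp_{2g}(\FF_2)$; (ii) the mod-$\ell$ image for each odd $\ell$ is as large as the cyclotomic-character constraint on the multiplier allows, inside $\GSp_{2g}(\FF_\ell)$; and (iii) the various $\ell$-adic lifts glue together compatibly, which follows from the fact that the nonabelian simple quotients $\mathrm{PSp}_{2g}(\FF_\ell)$ are pairwise nonisomorphic for distinct odd $\ell$.

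For (i), the mod-$2$ image is identified with the Galois group of the splitting field of $f_g(x)$ (the right-hand side of $C_g$) acting on the $2g+2$ Weierstrass points. I would factor $f_g \bmod p$ for a short list of small primes $p$ of good reduction and read off cycle types of Frobenius; exhibiting a transposition (from a factorization of shape $(2,1,\ldots,1)$) together with a prime-length cycle exceeding $g+1$, while using $\disc f_g\notin (\mathbb Q^\times)^2$ to force the image outside the alternating subgroup, will confirm that the Galois group is all of $S_{2g+2}$. For (ii), I would split into large and small $\ell$. For $\ell$ above a uniform bound depending only on $g$, a big-monodromy criterion along the lines of Hall's work applies: if the mod-$\ell$ image contains a transvection and is Zariski-dense in $\Sp_{2g}$, then it equals $\Sp_{2g}(\FF_\ell)$. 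A transvection is supplied by a prime of multiplicative reduction with unipotent inertia, which can be detected as a prime where $\disc f_g$ has odd valuation. For the finitely many small odd $\ell$ below this bound, I would verify surjectivity directly by computing Frobenius characteristic polynomials at several primes $q$ of good reduction (via point counts $\#C_g(\FF_q)$ and $\#C_g(\FF_{q^2})$), reducing them modulo $\ell$, and applying the Aschbacher classification of maximal subgroups of $\GSp_{2g}(\FF_\ell)$ to rule out every proper maximal subgroup that could contain the resulting elements. Step (iii), the lifting from mod-$\ell$ to $\ell$-adic surjectivity and then the adelic gluing, would proceed by the standard Goursat argument adapted to $\GSp_{2g}$, using the simplicity of $\mathrm{PSp}_{2g}(\FF_\ell)$ to rule out nontrivial fiber products.

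The main obstacle is the case-by-case verification for small odd primes (especially $\ell=3,5$ for $g=3$), where multiple Aschbacher classes of maximal subgroups of $\GSp_{2g}(\FF_\ell)$ must be simultaneously excluded by Frobenius characteristic polynomials; this is a focused but nontrivial finite computation. The evidently engineered coefficients of $C_2$ and $C_3$ suggest that a small set of Frobenius traces at small primes of good reduction is already sufficient to achieve all necessary exclusions, and that bad primes have been chosen so as to furnish the required transvection and transposition simultaneously, making the entire verification a tractable machine computation.
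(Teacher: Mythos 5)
Your overall decomposition (mod-$2$ image equal to $S_{2g+2}$, mod-$\ell$ image containing $\Sp_{2g}(\ZZ/\ell\ZZ)$ for odd $\ell$, then assemble) matches the paper's, and your tools for the odd primes — a transvection from a semistable prime of odd discriminant valuation, a Hall-type criterion for large $\ell$, Frobenius characteristic polynomials for the remaining small $\ell$ — are essentially those of Propositions~\ref{prop-anni}, \ref{proposition:zywina-criterion} and \ref{prop-irrop}. The gap is in your step (iii). Passing from mod-$2$ image $S_{2g+2}$ to the statement that the $2$-adic image is all of $\hypaddict_{2g+2}=\fy{2^\infty}{2}^{-1}(S_{2g+2})$ is not a routine Frattini/Goursat step: at $\ell=2$, surjectivity mod $2$ does not in general propagate $2$-adically (compare $\SL_2(\ZZ_2)$), and for general $g$ this implication is precisely Conjecture~\ref{conjecture:surjective-mod-2-implies-surjective-2-adically}, open in the paper. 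For $g=2,3$ the paper needs a dedicated argument (Lemma~\ref{lemma:2-adic}): for $g=2$ a companion-paper theorem exploiting $S_6=\Sp_4(\ZZ/2\ZZ)$, and for $g=3$ a Serre-style reduction to mod $8$ together with a machine check that no proper subgroup of $\hypaddict_8(8)$ surjects onto $S_8$. Your proposal offers no substitute for this step.

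Second, the adelic index-$2$ conclusion does not follow from componentwise maximality plus a Goursat argument based on the pairwise non-isomorphic simple quotients $\mathrm{PSp}_{2g}(\ZZ/\ell\ZZ)$: that argument only excludes entanglements between nonabelian simple factors, whereas the phenomenon that forces the index to be $2$ rather than $1$ over $\QQ$ is an \emph{abelian} entanglement (e.g.\ $\QQ(\sqrt{\disc f})\subset\QQ(J_{C_g}[2])$ lies in a cyclotomic field, correlating the mod-$2$ image with the multiplier), and one must show there are no further such entanglements, i.e.\ that the image of $G_\QQ$ in the abelianization of $\grouphyp{g}{\QQ}$ has index exactly $2$. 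The paper handles this via Lemma~\ref{lemma:reduction-from-adelic-to-mod-l}: since $\QQ^{\ab}=\QQ^{\cyc}$, one has $\rho_{J}(\Gal(\ol{\QQ}/\QQ^{\cyc}))=[\mono_J,\mono_J]$, and the group theory of Theorem~\ref{theorem:small-ab} and Lemma~\ref{theorem:r=2} identifies this closed commutator with $\fy{2^\infty}{2}^{-1}(A_{2g+2})\times\prod_{\ell\ge 3}\Sp_{2g}(\ZZ_\ell)$, of index $2$ in $\hyphat_{2g+2}$. Without this commutator/abelianization analysis (or an equivalent), your steps only bound the projections of the image and do not pin down the adelic index. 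Relatedly, the lower bound $\ge 2$ is not furnished by Corollary~\ref{monodromy-stack}, which gives only the containment $\mono_{J}\subset\grouphyp{g}{\QQ}$; it too comes from the commutator computation (Corollary~\ref{theorem:r=2:special}).
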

\begin{remark}
In checking the examples declared in Theorem~\ref{exemplinongratia}, we combined the methods developed in~\cite{anni2017constructing} and~\cite{seaweed} to expedite the verification process. It is also possible to modify the techniques introduced in~\cite{seaweed} to show that the curves cut out by the equations
\begin{align}
f(x) & = x^6 - 2x^4 - 2x^3 - 3x^2 - 2x + 1 \quad \text{and} \\
f(x) & = x^8 -4x^3 + 4x + 4,
\end{align}
which are of respective genera $2$ and $3$, both have maximal monodromy. The reader may contact any one of the authors if further details of the proof of this claim are desired.
\end{remark}

The rest of this paper is organized as follows:
Section~\ref{section:group-theory} is concerned with proving the
group-theoretic Theorem~\ref{theorem:small-ab}.
In Section~\ref{prelimswine},
we use Theorem~\ref{theorem:small-ab} to prove
Lemma~\ref{theorem:r=2}, which
is employed in the proof of
Theorem~\ref{mainbldg} to verify the claimed value
of $\delta_K$.
In Section~\ref{subsection:monodromy-of-families}
we compute the monodromy of various families of hyperelliptic curves.
We combine these two results to prove Theorem~\ref{mainbldg}
in Section~\ref{32isnotagoodscoreline}.
Finally, in Section~\ref{verified},
we prove Theorem~\ref{exemplinongratia}.

\section{Definitions and Properties of Symplectic Groups}
\label{section:group-theory}

This section is devoted to proving Theorem~\ref{theorem:small-ab},
which is needed for proving the main results of this paper, Theorems~\ref{mainbldg} and~\ref{exemplinongratia}. We start in Sections~\ref{subsection:stimpy} and~\ref{subsection:notation} by defining symplectic groups, discussing their basic properties, and introducing some recurring notation. Then, in Section~\ref{mygawdjamesyousabeast} we prove a result that is a crucial input to Section~\ref{section:proof-of-mainbldg}, where we prove Theorem~\ref{mainbldg}.
The reader may choose to continue directly to Section~\ref{section:proof-of-mainbldg} after studying the statement of Theorem~\ref{mygawdjamesyousabeast}.

\subsection{Symplectic Groups}\label{subsection:stimpy}

Let $R$ be a commutative ring, and let $g$ be a positive integer. Let $M$ be a free $R$-module of rank $2g$, and let $\langle -, - \rangle \colon M \times M \to R$ be a non-degenerate alternating bilinear form on $M$. Define the {\it general symplectic group} (otherwise known as the \emph{group of symplectic similitudes}) $\GSp(M) \subset \on{GL}(M)$ to be the subgroup of all $R$-automorphisms $S$ such that there exists some $m_S \in R^\times$, called the {\it multiplier} of $S$, satisfying $\langle S v, Sw \rangle = m_S \cdot \langle v, w \rangle$ for all $v, w \in M$. If $m_S$ exists, then it is necessarily unique, and one easily checks that the resulting {\it mult} map
\begin{align*}
	\mult \colon \GSp(M) & \rightarrow R^\times \\
	S & \mapsto m_S
\end{align*}
is a group homomorphism; we call its kernel the {\it symplectic group} $\Sp(M)$.

Choose an $R$-basis for $M$, and denote by $\Omega_{2g}$ the matrix which expresses the inner product $\langle - , - \rangle$ with respect to this basis.
The choice of basis gives rise to an identification $\GL(M) \simeq \GL_{2g}(R)$, and we take $\GSp_{2g}(R)$ to be the image of $\GSp(M)$ and $\Sp_{2g}(R)$ to be the image of $\Sp(M)$ under this identification. Let $\det \colon \GL_{2g}(R) \to R^\times$ be the determinant map, and observe that the diagram
\begin{center}
\begin{tikzcd}
\GSp(M) \arrow{r}{\sim} \arrow[swap]{rd}{\on{mult}^g} &  \GSp_{2g}(R) \arrow{d}{\on{det}} \\
& R^\times
\end{tikzcd}
\end{center}

\noindent commutes. Note
that $\GSp_{2g}(R) \subset \GL_{2g}(R)$ is the subgroup of all invertible matrices $S$ satisfying $S^T \Omega_{2g} S = (\on{mult} S) \, \Omega_{2g}$
and that $\Sp_{2g}(R) = \ker(\on{mult} \colon \GSp_{2g}(R) \to R^\times)$.

Let $\on{Mat}_{2g \times 2g}(R)$ be the space of $2g \times 2g$ matrices having entries in $R$, and consider the Lie algebras $\mf{gsp}_{2g}(R)$ and $\mf{sp}_{2g}(R)$ defined by
		\begin{align*}
        \mf{gsp}_{2g}(R) &\defeq \{\Lambda \in \on{Mat}_{2g \times 2g}(R) : \Lambda^T \Omega_{2g} + \Omega_{2g} \Lambda = d \cdot \Omega_{2g} \text{ for some } d \in R \}, \\
			\mf{sp}_{2g}(R) &\defeq \{\Lambda \in \on{Mat}_{2g \times 2g}(R) : \Lambda^T \Omega_{2g} + \Omega_{2g} \Lambda = 0 \}.
		\end{align*}
When studying Galois representations associated to PPAVs, we usually take $R$ to be one of the following: the profinite completion $\wh{\ZZ}$ of $\ZZ$, the ring of $\ell$-adic integers $\ZZ_{\ell}$ for a prime number $\ell$, or the finite cyclic ring $\ZZ / m \ZZ$ for a positive integer $m$.
Observe that we have the following isomorphisms of topological groups:
\begin{equation}\label{orientation1}
\GSp_{2g}(\ZZ_\ell)   \simeq  \varprojlim_k \GSp_{2g}(\ZZ/ \ell^k \ZZ) \quad \text{and}
\end{equation}
\begin{equation}\label{orientation2}
\prod_{\text{prime } \ell} \GSp_{2g}(\ZZ_\ell) \simeq  \GSp_{2g}(\wh{\ZZ}) \simeq  \varprojlim_m \GSp_{2g}(\ZZ / m \ZZ).
\end{equation}
The isomorphisms \eqref{orientation1} and~\eqref{orientation2} remain valid if $\GSp_{2g}$ is replaced by $\Sp_{2g}$. As for the Lie algebras, note that by sending $\Lambda \mapsto \id_{2g} + \ell^k \Lambda$ we obtain group isomorphisms
\begin{align*}
\mf{gsp}_{2g}(\ZZ/ \ell \ZZ) &\simeq \ker(\GSp_{2g}(\ZZ/\ell^{k+1} \ZZ) \to \GSp_{2g}(\ZZ/ \ell^k \ZZ)), \\
\mf{sp}_{2g}(\ZZ/ \ell \ZZ) &\simeq \ker(\Sp_{2g}(\ZZ/\ell^{k+1} \ZZ) \to \Sp_{2g}(\ZZ/ \ell^k \ZZ))
\end{align*}
for every $k \geq 1$, so when it is useful or convenient, we will sometimes use the Lie algebra notation to denote the above kernels.

\subsection{Computing Commutators of Large Subgroups of $\GSp_{2g}(\ZZ_2)$} \label{mygawdjamesyousabeast}
The objective of this section is to prove a soon-to-be-useful theorem concerning the commutator of a subgroup of $\GSp_{2g}(\ZZ_2)$ which is the preimage (under mod-2 reduction) of a subgroup of $\Sp_{2g}(\bz / 2 \bz)$ that contains a copy of the symmetric group $S_{2g+1}$.

\subsubsection{Embedding the Symmetric Group, Take 1} \label{take1}

We asserted in the discussion immediately preceding the statement of Theorem~\ref{mainbldg} that the symmetric group $S_{2g+1}$ may be viewed as a subgroup of $\Sp_{2g}(\ZZ/2\ZZ)$. We now provide a working description of the way in which this embedding is constructed; the manner in which this description applies to the context of studying hyperelliptic curves is discussed in Section~\ref{symbed}.
\begin{lemma} \label{lemma:include-s}
For every $g \ge 2$, we have an inclusion $S_{2g+2} \hookrightarrow \Sp_{2g}(\bz /2 \ZZ)$.
When $g = 2$, this inclusion is an isomorphism.
\end{lemma}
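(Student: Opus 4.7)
The plan is to construct the embedding very concretely by letting $S_{2g+2}$ act on the $\FF_2$-vector space spanned by a set of $2g+2$ symbols (which, in the setting of hyperelliptic curves discussed in Section~\ref{symbed}, will play the role of the Weierstrass points). Specifically, I would set $T = \{1, 2, \ldots, 2g+2\}$, form the permutation representation $U \defeq \FF_2^T$, and then identify two distinguished $S_{2g+2}$-invariant subspaces: the hyperplane $V \subset U$ consisting of vectors of even weight, and the line $W \subset V$ spanned by the all-ones vector. Since $T$ has even cardinality, $W \subset V$, and the quotient $V/W$ has $\FF_2$-dimension $2g$.

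On $U$ the standard symmetric bilinear form $\langle \mathbf{1}_A, \mathbf{1}_B\rangle = |A \cap B| \bmod 2$ is preserved by $S_{2g+2}$; restricted to $V$ it is alternating (since $|A|$ is even for $A \in V$), its radical is exactly $W$, and it therefore descends to a non-degenerate alternating form on $V/W$. This gives a homomorphism
\[
\iota \colon S_{2g+2} \longrightarrow \Sp(V/W) \cong \Sp_{2g}(\FF_2).
\]

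The only real content is injectivity, which is where I expect essentially all of the work to live, though it is elementary. Suppose $\sigma \in S_{2g+2}$ lies in the kernel, so that for every even subset $A \subset T$, one has $\sigma(A) \in \{A, T \setminus A\}$. Applied to a pair $A = \{i,j\}$, the complement $T \setminus \{i,j\}$ has size $2g \ge 4 > 2$ for $g \ge 2$, so $\sigma$ must fix every two-element subset setwise. A short argument (take $\{i,j\}$ and $\{i,k\}$ with $j \neq k$; if $\sigma$ swapped $i$ and $j$, then $\sigma(\{i,k\}) = \{j,\sigma(k)\}$ could not equal $\{i,k\}$) then forces $\sigma(i) = i$ for all $i$, so $\iota$ is injective.

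Finally, the isomorphism claim for $g = 2$ is a matter of comparing cardinalities: $|S_6| = 720$ and
\[
|\Sp_4(\FF_2)| = 2^{4}(2^2-1)(2^4-1) = 16 \cdot 3 \cdot 15 = 720,
\]
so the injection $\iota$ must be a bijection when $g = 2$. I would close the proof there.
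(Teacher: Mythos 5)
Your construction is essentially the same as the paper's: the even-weight hyperplane you call $V$ is exactly $t^\perp$ for the standard form, and both proofs pass to $t^\perp/\on{span}(t)$ with its induced nondegenerate alternating $S_{2g+2}$-invariant form, then settle $g=2$ by comparing $|S_6|=720$ with $|\Sp_4(\FF_2)|=720$. Your explicit verification of injectivity (which the paper leaves implicit) is correct and a welcome, if minor, addition.
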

\begin{proof}
	Let $V$ be a $(2g+2)$-dimensional vector space over $\FF_2$, and equip $V \simeq \mathbb{F}_2^{2g+2}$ with the standard inner product. Let $t \defeq (1, \ldots, 1)$ be the vector whose components are all equal to $1$. Then the hyperplane $t^\perp \subset V$ of all vectors orthogonal to $t$ actually contains $t$ since $\dim V = 2g+2$ is even. Moreover, if we define $W = t^\perp / \on{span}(t)$, the inner product on $V$ descends to a nondegenerate alternating bilinear form on $W$. The action of $S_{2g+2}$ given by permuting the coordinates of $V$ fixes both $t$ and $t^\perp$, so it descends to an action on $W$ that preserves the bilinear form.
Thus, we obtain an inclusion of $S_{2g+2}$ into the group of symplectic transformations of $W$ with multiplier $1$. For a more conceptual
explanation of this inclusion in terms of the two-torsion of hyperelliptic
curves, see Section \ref{symbed}.
Upon choosing a suitable basis for $W$ we may identify this group with $\Sp_{2g}(\ZZ / 2 \ZZ)$. For $g = 2$, the resulting inclusion is an isomorphism because $\#(S_6) = 720 = \#(\Sp_4(\ZZ / 2 \ZZ))$.
\end{proof}
We embed $S_{2g+1} \hookrightarrow S_{2g+2}$ as the subgroup fixing the vector $(0,\dots,0,1) \in \FF_2^{2g+2}$.

\subsubsection{Notation}
\label{subsection:notation}

In what follows, we shall (for the most part) study subquotients of $\GSp_{2g}(\ZZ_2)$ and $\GSp_{2g}(\ZZ/ 2^k \ZZ)$ for $k$ a positive integer. We employ the following notational conventions:
\begin{itemize}
\item Let $H\subset \GSp_{2g}(\ZZ_2)$ be a closed subgroup.
\item For $m, n \in \ZZ_{> 0} \cup \{\infty\}$
		with $m > n$, let $\gify{2^m}{2^n} \colon \GSp_{2g}(\ZZ/2^m \ZZ) \to \GSp_{2g}(\ZZ/2^n \ZZ)$ and $\fy{2^m}{2^n} \colon \Sp_{2g}(\ZZ/2^m \ZZ) \to \Sp_{2g}(\ZZ/2^n \ZZ)$ be the natural projection maps. (When $m = \infty$, $\ZZ/2^m\ZZ$ denotes $\ZZ_2$.)
\item Let $H(2^k) = \gify{2^\infty}{2^k}(H) \subset \GSp_{2g}(\ZZ/2^k \ZZ)$ be the mod-$2^k$ reduction of $H$.
\item For any topological group $G$, let $[G,G]$ be the closure of its commutator subgroup, and let $G^{\ab} \defeq G/{ {[G,G]}}$ be its abelianization.
\item For each positive integer $n$, let $\id_n$ denote the $n \times n$ identity matrix.
\end{itemize}

\subsubsection{Main Group Theoretic Result}

We can now state the main theorem of this section.

\begin{theorem} \label{theorem:small-ab}
	Let $g \geq 2$. Let $H \subset \GSp_{2g}(\bz_2)$ be a subgroup such that $H = \gify{2^\infty}{2}^{-1}(H(2))$    and such that $H(2)$ contains $S_{2g+1}$.
    Then we have that
	\begin{equation}\label{thefirstclaim}
    [H, H] = \fy{2^\infty}{2}^{-1}([H(2), H(2)]).
    \end{equation}
	Moreover, the homomorphism \(H \to (H(2))^{\ab} \times (\bz_2)^\times\), defined on the left component by postcomposing reduction mod-2 with the abelianization map $H(2) \to H(2)^{\on{ab}}$ and on the right component by the multiplier map $\on{mult}$, induces an isomorphism
    \begin{equation}\label{thesecondclaim}
    H^{\on{ab}} \simeq (H(2))^{\ab} \times (\bz_2)^\times.
    \end{equation}
\end{theorem}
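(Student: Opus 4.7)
My plan is to establish \eqref{thefirstclaim} first and deduce \eqref{thesecondclaim} as a consequence. The natural map $\psi \colon H \to H(2)^{\ab} \times \ZZ_2^\times$ factors through $H^{\ab}$ (since commutators have trivial multiplier and trivial image in $H(2)^{\ab}$), and its kernel is exactly $\fy{2^\infty}{2}^{-1}([H(2), H(2)])$; thus \eqref{thesecondclaim} is equivalent to \eqref{thefirstclaim} together with surjectivity of $\psi$. Surjectivity follows readily from the hypothesis $H = \gify{2^\infty}{2}^{-1}(H(2))$, which ensures $H \supset \gify{2^\infty}{2}^{-1}(\{\id\})$; this subgroup surjects onto $\ZZ_2^\times = 1 + 2\ZZ_2$ under the multiplier via elements such as $\on{diag}(1, \ldots, 1, c, \ldots, c)$, and surjectivity onto the $H(2)^{\ab}$ factor is clear from $H \twoheadrightarrow H(2)$.

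Turning to \eqref{thefirstclaim}, the inclusion $[H,H] \subset \fy{2^\infty}{2}^{-1}([H(2), H(2)])$ is immediate by reducing commutators mod $2$. For the reverse, by lifting a product-of-commutators decomposition of each element of $[H(2), H(2)]$ back to $H$, the problem reduces to showing $K^{(1)} := \ker(\Sp_{2g}(\ZZ_2) \to \Sp_{2g}(\FF_2))$ is contained in $[H, H]$. Setting $K^{(j)} := \ker(\Sp_{2g}(\ZZ_2) \to \Sp_{2g}(\ZZ/2^j))$ and exploiting that $[H,H]$ is closed, I would argue by induction on $j \geq 1$ that the image of $[H, H] \cap K^{(j)}$ in the $H(2)$-module $K^{(j)}/K^{(j+1)} \cong \mathfrak{sp}_{2g}(\FF_2)$ is all of $\mathfrak{sp}_{2g}(\FF_2)$. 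The commutator identity $[h, 1 + 2^j \Lambda] \equiv 1 + 2^j (\bar h \bar \Lambda \bar h^{-1} - \bar \Lambda) \pmod{2^{j+1}}$ will handle the augmentation submodule $I \cdot \mathfrak{sp}_{2g}(\FF_2)$. For $j \geq 2$, the remaining direction — the $H(2)$-coinvariants of $\mathfrak{sp}_{2g}(\FF_2)$, which is at most one-dimensional by the irreducibility of $\mathfrak{psp}_{2g}(\FF_2)$ as an $S_{2g+1}$-module for $g \ge 2$ (checked using Lemma \ref{lemma:include-s}) — is handled by a bootstrapping argument: if $c_i \in [H, H] \cap K^{(j_i)}$ has class $A_i$, then $[c_1, c_2] \in [H, H] \cap K^{(j_1 + j_2)}$ has class $[A_1, A_2]$ at level $j_1 + j_2$, and since $[\mathfrak{sp}_{2g}(\FF_2), \mathfrak{sp}_{2g}(\FF_2)] = \mathfrak{sp}_{2g}(\FF_2)$ for $g \geq 2$, iterating from level $1$ yields full surjectivity at every higher level.

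The proof thus reduces to the base case $j = 1$, which is the main technical obstacle: exhibiting an element of $[H, H] \cap K^{(1)}$ whose class in $\mathfrak{sp}_{2g}(\FF_2)$ is $\id_{2g}$ (the generator of the coinvariant quotient, which is $H(2)$-fixed because in characteristic $2$ the identity matrix lies in $\mathfrak{sp}_{2g}(\FF_2)$). I propose to do this by taking two commuting involutions $\bar h_1, \bar h_2 \in S_{2g+1} \subset H(2)$ — say the disjoint double transpositions $(1\,2)(3\,4)$ and $(1\,3)(2\,4)$, which commute and exist for any $g \geq 2$ — lifting them to $h_1, h_2 \in H$, and checking by a direct matrix calculation with the symmetric-group embedding of Lemma \ref{lemma:include-s} that $[h_1, h_2]$ lies in $K^{(1)}$ and has non-zero scalar class modulo $I \cdot \mathfrak{sp}_{2g}(\FF_2)$. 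The non-vanishing is independent of the choice of lifts, since different lifts perturb $[h_1, h_2]$ by elements landing in the augmentation submodule (already captured). The hard step is this explicit computation of the scalar class, as it hinges on the fine combinatorial structure of the $S_{2g+1}$-embedding and the level-$2$ filtration of the congruence kernel.
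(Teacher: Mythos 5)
Your overall reduction (surjectivity of $\psi$ plus $[H,H]\supset\ker\fy{2^\infty}{2}$, attacked level-by-level through the filtration $K^{(j)}$ using closedness of $[H,H]$) is sound, but the two group/Lie-theoretic facts you lean on are false in characteristic $2$, and the step you yourself flag as the crux is not carried out. First, $[\mf{sp}_{2g}(\FF_2),\mf{sp}_{2g}(\FF_2)]\neq\mf{sp}_{2g}(\FF_2)$: under $\Lambda\mapsto\Omega_{2g}\Lambda$ the algebra $\mf{sp}_{2g}(\FF_2)$ is identified with symmetric matrices, and a direct check shows every bracket has zero diagonal in this model, so the derived algebra has codimension at least $2g$ (already $\mf{sp}_2(\FF_2)=\mf{sl}_2(\FF_2)$ has one-dimensional derived algebra). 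This is exactly why the paper's Lemma~\ref{lemma:gsp-perfect} is stated for $\mf{gsp}$, not $\mf{sp}$. For the same reason $\mf{sp}_{2g}(\FF_2)/\langle\id_{2g}\rangle$ is \emph{not} an irreducible $S_{2g+1}$-module: the zero-diagonal (alternating) matrices form a proper nonzero $\Sp_{2g}(\FF_2)$-submodule of dimension $2g^2-g$ containing $\id_{2g}$. So both your bound on the coinvariants and your bootstrap for $j\geq 2$ rest on incorrect statements (weaker true statements might be substituted, but that requires a genuine argument you have not given). Second, the base case $j=1$ is left as an unverified ``direct matrix calculation,'' and the claim that the class of $[h_1,h_2]$ modulo $I\cdot\mf{sp}_{2g}(\FF_2)$ is independent of the lifts is unjustified: since $H\supset\ker\gify{2^\infty}{2}$, lifts may be altered by $\id+2\Lambda$ with $\Lambda\in\mf{gsp}_{2g}(\FF_2)$, and the resulting perturbations $\Lambda+\bar h\Lambda\bar h^{-1}$ need \emph{not} lie in $I\cdot\mf{sp}_{2g}(\FF_2)$ when $\Lambda\notin\mf{sp}_{2g}(\FF_2)$ --- indeed such elements are precisely how the missing directions are produced.

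This last point is also the idea your proposal misses and which makes the paper's proof work: the hypothesis $H=\gify{2^\infty}{2}^{-1}(H(2))$ gives you the full $\GSp$-congruence kernel, so in your commutator identity you may take $\Lambda\in\mf{gsp}_{2g}(\FF_2)$ rather than $\mf{sp}_{2g}(\FF_2)$. The paper (Proposition~\ref{lemma:contains-mod-2}) shows, by explicit computation with lifts of transpositions of $S_{2g+1}$ and elements such as $x_{11}\otimes\id_g$ (which has nonzero multiplier-derivative $d$), that the elements $\Lambda+M_2\Lambda M_2^{-1}$ with $\Lambda\in\mf{gsp}_{2g}(\FF_2)$ already span \emph{all} of $\mf{sp}_{2g}(\FF_2)$, so no separate coinvariant/base-case analysis is needed at level $1$; the higher levels are then handled not by bracketing $\mf{sp}$ with itself but via $[\ker\gify{2^\infty}{2},\ker\gify{2^\infty}{2}]=\ker\fy{2^\infty}{4}$, which again uses $[\mf{gsp}_{2g}(\FF_2),\mf{gsp}_{2g}(\FF_2)]=\mf{sp}_{2g}(\FF_2)$ together with a cited result giving $\ker\fy{2^\infty}{8}$. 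To repair your argument you would either need to import this $\mf{gsp}$-trick (at which point your coinvariant and bracket machinery becomes unnecessary), or genuinely prove (i) that the $S_{2g+1}$-coinvariants of $\mf{sp}_{2g}(\FF_2)$ are one-dimensional with a specified nonzero generator, (ii) that your element $[h_1,h_2]$ hits it for a suitable choice of lifts, and (iii) that the generator lies in $I\cdot\mf{sp}_{2g}(\FF_2)+[\mf{sp}_{2g}(\FF_2),\mf{sp}_{2g}(\FF_2)]$ for the induction at levels $j\geq2$; none of these is established in the proposal.
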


The relevance of Theorem~\ref{theorem:small-ab} to studying Galois representations of Jacobians of hyperelliptic curves is described in Lemma~\ref{theorem:r=2}, given at the beginning of Section~\ref{section:proof-of-mainbldg}.
We prove Theorem~\ref{theorem:small-ab} next in Section~\ref{subsection:proof-of-first-claim}.

\subsection{Proof of Theorem~\ref{theorem:small-ab}}
\label{subsection:proof-of-first-claim}
\begin{proof}[Proof of Theorem~\ref{theorem:small-ab} assuming
	Corollary~\ref{lemma:contains-mod-4} and
Proposition~\ref{lemma:contains-mod-2}]
Because we have that $$[H, H](2) = [H(2), H(2)],$$ in order to prove~\eqref{thefirstclaim}, it suffices to prove that
\(
	[H, H] \supset \ker \fy{2^\infty}{2}.
\)
To prove this statement, it further suffices to prove the following two statements:
\begin{enumerate}
\item[\customlabel{property-a}{(A)}] \(
	[H, H] \supset \ker \fy{2^\infty}{4},
\)
\item[\customlabel{property-b}{(B)}] \(
	[H(4), H(4)] \supset \ker \fy{4}{2}.
\)
\end{enumerate}
Statement~\ref{property-a} is proven in
Corollary~\ref{lemma:contains-mod-4}
and statement~\ref{property-b} is proven in
Proposition~\ref{lemma:contains-mod-2}.
To complete the proof, we only need verify~\eqref{thesecondclaim}. Note that~\eqref{thefirstclaim} tells us that the map $H \to (H(2))^{\ab} \times (\bz_2)^\times$ has kernel precisely $[H,H]$, so to prove~\eqref{thesecondclaim}, it suffices to check that the map $H \to (H(2))^{\ab} \times (\bz_2)^\times$ is surjective.
But this is easy to check by hand:
For $\alpha \in (\mathbb Z_2)^\times$, let $N_\alpha$ be the matrix which
has alternating $1$'s and $\alpha$'s on the diagonal, taken with respect to
a symplectic basis $e_1, \ldots, e_{2g}$ where $\langle e_{i}, e_{j} \rangle$
is $1$ if $i = 2k, j = 2k+1$ for some integer $k$, is $-1$ if $i = 2k+1, j = 2k$, and is $0$ otherwise.
For $(M_2, \alpha) \in (H(2))^{\ab} \times (\bz_2)^\times$, let $M_2^\infty \in \fy{2^\infty}{2}^{-1}(M_2)$, and observe that $M_2^\infty \cdot N_\alpha \mapsto (M_2, \alpha)$ via the map $H \to (H(2))^{\ab} \times (\bz_2)^\times$. This concludes the proof of our main group-theoretic result, Theorem~\ref{theorem:small-ab}.
\end{proof}
\subsubsection{Proving Statement~\ref{property-a}}\label{honeyseek}

We begin with the following lemma, in which we compute the commutator subalgebra of $\mf{gsp}_{2g}(\ZZ / 2 \ZZ)$.

\begin{lemma} \label{lemma:gsp-perfect}
	Let $\ell$ be a prime number. We have
	\(
		[\mf{gsp}_{2g}(\bz / \ell \bz), \mf{gsp}_{2g}(\bz / \ell \bz)] = \mf{sp}_{2g}(\bz / \ell \bz).
	\)\footnote{This result is a variant of~\cite[Proposition 2.10]{landesman-swaminathan-tao-xu:rational-families}.}
\end{lemma}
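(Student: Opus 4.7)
The plan is to prove the two inclusions separately, with the easy direction $\subseteq$ coming from an abelian quotient and the harder direction $\supseteq$ reducing to perfectness of the classical Lie algebra $\mathfrak{sp}_{2g}(\mathbb{F}_\ell)$.

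For the inclusion $[\mathfrak{gsp}_{2g}(\mathbb{F}_\ell), \mathfrak{gsp}_{2g}(\mathbb{F}_\ell)] \subseteq \mathfrak{sp}_{2g}(\mathbb{F}_\ell)$, I would show that the assignment $\Lambda \mapsto d_\Lambda$, where $d_\Lambda$ is the unique scalar satisfying $\Lambda^T \Omega_{2g} + \Omega_{2g}\Lambda = d_\Lambda \Omega_{2g}$, defines a Lie algebra homomorphism $\mathfrak{gsp}_{2g}(R) \to R$ (the target being the abelian Lie algebra on the additive group of $R$) whose kernel is exactly $\mathfrak{sp}_{2g}(R)$. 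This is essentially the infinitesimal version of the multiplier homomorphism introduced in Section~\ref{subsection:stimpy}. A short calculation, pushing $\Lambda_1^T \Omega$ and $\Lambda_2^T \Omega$ through each other using the defining relations, shows that the map is a Lie algebra map. Since the target is abelian, the commutator subalgebra lands in the kernel, giving $\subseteq$.

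For the reverse inclusion, the key observation is that $\mathfrak{sp}_{2g} \subseteq \mathfrak{gsp}_{2g}$ implies $[\mathfrak{sp}_{2g}, \mathfrak{sp}_{2g}] \subseteq [\mathfrak{gsp}_{2g}, \mathfrak{gsp}_{2g}]$, so it suffices to prove that $\mathfrak{sp}_{2g}(\mathbb{F}_\ell)$ is a perfect Lie algebra for every prime $\ell$ and every $g \geq 1$. Using the explicit description of elements as block matrices $\bigl(\begin{smallmatrix} A & B \\ C & -A^T \end{smallmatrix}\bigr)$ with $B, C$ symmetric, I would exhibit each of the standard basis vectors (the elementary symmetric $B$-blocks, $C$-blocks, and $A$-blocks) as a commutator of two such matrices. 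For odd $\ell$ one may invoke the standard Chevalley-basis root decomposition of the split simple Lie algebra of type $C_g$, which immediately yields perfectness.

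The main obstacle is the case $\ell = 2$, where the usual Chevalley-basis identities require factors of $2$ and hence break down; moreover, $\mathfrak{sp}_{2g}(\mathbb{F}_2)$ contains the identity matrix and has more subtle structure, so one must compute commutators explicitly and verify that every elementary block-matrix generator arises. A clean route is to adapt (or directly invoke, since the lemma is advertised as a variant) the analogous statement \cite[Proposition 2.10]{landesman-swaminathan-tao-xu:rational-families}; the adaptation amounts to checking that each commutator relation in $\mathfrak{sp}_{2g}$ actually takes place inside the larger algebra $\mathfrak{gsp}_{2g}$, which is automatic. Combining the two inclusions completes the proof.
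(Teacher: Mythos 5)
Your easy inclusion is fine (the infinitesimal multiplier $\Lambda \mapsto d_\Lambda$ is indeed a homomorphism onto the abelian Lie algebra $R$ with kernel $\mathfrak{sp}_{2g}(R)$), and your Chevalley-basis argument handles odd $\ell$, which is also how the paper disposes of that case (by citing the companion result). But the reduction of the hard inclusion to perfectness of $\mathfrak{sp}_{2g}(\mathbb{F}_\ell)$ breaks down at $\ell = 2$, which is precisely the case the lemma is really needed for. In fact $\mathfrak{sp}_{2g}(\mathbb{F}_2)$ is \emph{not} perfect: writing elements as $\bigl(\begin{smallmatrix} A & B \\ C & -A^T\end{smallmatrix}\bigr)$ with $B,C$ symmetric, the upper-right block of a commutator of two such elements is $AE + EA^T + BD^T + DB$, whose $(i,i)$ entry is $2\sum_j A_{ij}E_{ij} + 2\sum_j B_{ij}D_{ij} = 0$ in characteristic $2$ because $E$ and $B$ are symmetric (likewise for the lower-left block). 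Hence no element $\bigl(\begin{smallmatrix} 0 & B \\ 0 & 0\end{smallmatrix}\bigr)$ with $B$ having a nonzero diagonal entry lies in $[\mathfrak{sp}_{2g}(\mathbb{F}_2),\mathfrak{sp}_{2g}(\mathbb{F}_2)]$; already for $g=1$ the derived algebra of $\mathfrak{sl}_2(\mathbb{F}_2)$ is one-dimensional. So the step ``it suffices to prove $\mathfrak{sp}_{2g}(\mathbb{F}_\ell)$ is perfect'' is false when $\ell = 2$.

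Your fallback for $\ell = 2$ --- adapt or directly invoke the cited proposition, ``checking that each commutator relation in $\mathfrak{sp}_{2g}$ actually takes place inside $\mathfrak{gsp}_{2g}$, which is automatic'' --- does not repair this: that proposition is what covers $\ell \ge 3$ (which is exactly why the lemma is billed as a variant of it), and the obstruction at $\ell = 2$ is not whether relations inside $\mathfrak{sp}$ survive in $\mathfrak{gsp}$, but that one must take commutator \emph{inputs} from $\mathfrak{gsp}_{2g}(\mathbb{F}_2)$ that lie outside $\mathfrak{sp}_{2g}(\mathbb{F}_2)$. This is the essential point of the paper's computation: after producing the block-diagonal part of $\mathfrak{sp}_{2g}(\mathbb{F}_2)$ from commutators much as you suggest, it brackets the element $\bigl(\begin{smallmatrix} \id_g & 0 \\ 0 & 0\end{smallmatrix}\bigr)$, which has infinitesimal multiplier $d=1$ and so lies in $\mathfrak{gsp}_{2g}(\mathbb{F}_2)\setminus\mathfrak{sp}_{2g}(\mathbb{F}_2)$, against $\bigl(\begin{smallmatrix} 0 & B \\ 0 & 0\end{smallmatrix}\bigr)$ to obtain every symmetric upper-right block $B$ (including those with nonzero diagonal), and similarly for the lower-left blocks. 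Without introducing such a generator from outside $\mathfrak{sp}$, your argument cannot reach all of $\mathfrak{sp}_{2g}(\mathbb{F}_2)$, so the $\ell=2$ case remains a genuine gap.
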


\begin{proof}
For convenience, let $\mf{g}_\ell \defeq [\mf{gsp}_{2g}(\bz / \ell \bz), \mf{gsp}_{2g}(\bz / \ell \bz)]$. That $\mf{g}_\ell \subset \mf{sp}_{2g}(\ZZ/\ell\ZZ)$ is obvious from the definitions of $\mf{gsp}_{2g}(\ZZ/\ell\ZZ)$ and $\mf{sp}_{2g}(\ZZ/\ell\ZZ)$, so it suffices to prove that reverse containment. For $\ell \geq 3$, this is immediate from~\cite[Proposition 2.10]{landesman-swaminathan-tao-xu:rational-families}, so we may restrict to the case where $\ell = 2$ (note that this is the case of primary interest to us).\footnote{In essence, the reason why the case of $\ell$ odd needs to be handled separately is that $\mf{sp}_{2g}(\ZZ/\ell\ZZ)$ is a perfect Lie algebra if and only if $\ell$ is odd, a result due to Hogeweij~\cite{hog1982}.} Choose a basis for $(\ZZ/2\ZZ)^{2g}$ with respect to which $\Omega_{2g}$ is given by
    \(
    	\Omega_{2g} = \left[ \begin{array}{c|c} 0 & \id_g \\  \hline  -\id_g & 0 \end{array} \right].
    \)
    Then $\mf{sp}_{2g}(\bz / 2 \bz)$ consists of matrices of the form
    \(
   	\left[ \begin{array}{c|c} A & B \\ \hline  C & -A^T \end{array} \right]
    \)
   where $A,B,C \in \on{Mat}_{g \times g}(\ZZ/2\ZZ)$ and $B,C$ are required to be symmetric. Since we have
\begin{align}
	\label{equation:block-diagonal-commutator}
 \left[ \left[\begin{array}{c|c} A & 0 \\ \hline 0 & -A^T \end{array}\right], \left[\begin{array}{c|c} D & 0 \\ \hline 0 & -D^T \end{array}\right] \right] & = \left[\begin{array}{c|c} AD - DA & 0 \\ \hline 0 & A^TD^T - D^TA^T \end{array}\right],
 \intertext{all block-diagonal matrices in $\mf{sp}_{2g}(\ZZ/2\ZZ)$ with every diagonal entry equal to $0$ are contained in $\mf{g}_2$. Moreover, for symmetric $B,C,E,F \in \on{Mat}_{g \times g}(\ZZ/2\ZZ)$, we have}
\label{equation:block-off-diagonal-commutator}
 \left[ \left[\begin{array}{c|c} 0 & B \\ \hline C & 0 \end{array}\right], \left[\begin{array}{c|c} 0 & E \\ \hline F & 0 \end{array}\right] \right] & = \left[\begin{array}{c|c} BF - EC & 0 \\ \hline 0 & CE - FB \end{array}\right],
 \intertext{and we can arrange that $BF-EC$ is an elementary matrix with a single nonzero entry on the diagonal.
	 Summing matrices from~\eqref{equation:block-diagonal-commutator} and~\eqref{equation:block-off-diagonal-commutator},
tells us that all block-diagonal matrices in $\mf{sp}_{2g}(\ZZ/2\ZZ)$ are contained in $\mf{g}_2$. Additionally, note that $\mf{gsp}_{2g}(\bz / 2 \bz)$ also contains
   	\(\left[ \begin{array}{c|c} \id_g & 0 \\ \hline  0 & 0  \end{array} \right], \)
    from which we deduce that $\mf{g}_2$ contains }
	    	\left[ \left[ \begin{array}{c|c} \id_g & 0 \\ \hline  0 & 0  \end{array} \right], \left[ \begin{array}{c|c} 0 & B \\ \hline  0 & 0 \end{array} \right] \right] &= \left[ \begin{array}{c|c} 0 & B \\ \hline  0 & 0 \end{array} \right],
\end{align}
where $B \in \on{Mat}_{g \times g}(\ZZ/2\ZZ)$ is symmetric. One similarly checks that $\mf{g}_2$ contains
\(\left[ \begin{array}{c|c} 0 & 0 \\ \hline  C & 0 \end{array} \right],\)
for $C \in \on{Mat}_{g \times g}(\ZZ/2\ZZ)$ symmetric. It follows that $\mf{g}_2 \supset \mf{sp}_{2g}(\ZZ/2\ZZ)$. \qedhere
\end{proof}

\begin{corollary} \label{corollary:coolwhip}
	We have
	\(
		[\ker \gify{2^\infty}{2}, \ker \gify{2^\infty}{2}] = \ker \fy{2^\infty}{4}.
	\)
\end{corollary}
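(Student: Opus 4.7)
Write $K_k \defeq \ker \gify{2^\infty}{2^k}$ for the mod-$2^k$ reduction kernel in $\GSp_{2g}(\ZZ_2)$, and $K_k^{\Sp} \defeq \ker \fy{2^\infty}{2^k} = K_k \cap \Sp_{2g}(\ZZ_2)$. The corollary asserts $[K_1, K_1] = K_2^{\Sp}$, where $[\cdot,\cdot]$ denotes the closed commutator subgroup. I will verify the two inclusions separately. For the inclusion $\subseteq$, given $A, B \in K_1$, write $A = \id_{2g} + 2X$ and $B = \id_{2g} + 2Y$ for some $X, Y \in \on{Mat}_{2g \times 2g}(\ZZ_2)$. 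A direct $2$-adic expansion yields
\[
[A, B] \equiv \id_{2g} + 4(XY - YX) \pmod{8},
\]
and in particular $[A, B] \equiv \id_{2g} \pmod{4}$, so $[A, B] \in K_2$. Since any commutator has trivial multiplier, $[A, B] \in \Sp_{2g}(\ZZ_2)$, whence $[A, B] \in K_2^{\Sp}$. Because $K_2^{\Sp}$ is closed, the closed commutator subgroup satisfies $[K_1, K_1] \subseteq K_2^{\Sp}$.

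For the reverse inclusion, the same sort of expansion generalizes: if $A = \id_{2g} + 2X \in K_1$ and $B = \id_{2g} + 2^k Y \in K_k$ with $k \geq 1$, then
\[
[A, B] \equiv \id_{2g} + 2^{k+1}(XY - YX) \pmod{2^{k+2}}.
\]
Thus $[A, B] \in K_{k+1}^{\Sp}$, and under the identification $K_{k+1}^{\Sp}/K_{k+2}^{\Sp} \simeq \mf{sp}_{2g}(\ZZ/2\ZZ)$ given by $\id_{2g} + 2^{k+1}\Lambda \mapsto \Lambda$, the class of $[A, B]$ is the Lie bracket $[\bar X, \bar Y]$ of the reductions $\bar X, \bar Y \in \mf{gsp}_{2g}(\ZZ/2\ZZ)$. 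Lemma~\ref{lemma:gsp-perfect} asserts precisely that such Lie brackets span $\mf{sp}_{2g}(\ZZ/2\ZZ)$; therefore, for every $k \geq 1$, the image of $[K_1, K_k]$ in the graded piece $K_{k+1}^{\Sp}/K_{k+2}^{\Sp}$ is all of it.

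To upgrade this graded surjectivity to the honest containment $K_2^{\Sp} \subseteq [K_1, K_1]$, I apply a standard profinite approximation. Set $N \defeq [K_1, K_1]$; since $[K_1, K_k] \subseteq N$ for every $k \geq 1$, the previous paragraph gives that $N \cap K_k^{\Sp}$ surjects onto $K_k^{\Sp}/K_{k+1}^{\Sp}$ for every $k \geq 2$. Given any $x \in K_2^{\Sp}$, I inductively pick $n_k \in N \cap K_k^{\Sp}$ for $k \geq 2$ so that $x \cdot (n_k n_{k-1} \cdots n_2)^{-1} \in K_{k+1}^{\Sp}$. The partial products $n_k \cdots n_2$ then converge $2$-adically to $x$, each lies in $N$, and $N$ is closed, so $x \in N$. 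This yields $K_2^{\Sp} \subseteq N$ and finishes the proof. The main technical content is the commutator expansion above, which is just routine $2$-adic bookkeeping; the essential algebraic input is Lemma~\ref{lemma:gsp-perfect}, and the convergence step is standard.
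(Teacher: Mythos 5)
Your proof is correct, but it takes a more self-contained route than the paper does. The paper disposes of the hard inclusion in two moves: it cites an external result (Lemma 2.11 of the companion paper \cite{landesman-swaminathan-tao-xu:rational-families}) to get $[\ker \gify{2^\infty}{2}, \ker \gify{2^\infty}{2}] \supset \ker \fy{2^\infty}{8}$ outright, and then applies Lemma~\ref{lemma:gsp-perfect} exactly once, at the mod-$8$ level, identifying $\ker\gify{8}{4}$ with $\mf{gsp}_{2g}(\ZZ/2\ZZ)$ to bridge from level $8$ down to level $4$. You instead reprove the depth part from scratch: the expansion $[\,\id_{2g}+2X,\ \id_{2g}+2^{k}Y\,]\equiv \id_{2g}+2^{k+1}(XY-YX)\pmod{2^{k+2}}$ together with Lemma~\ref{lemma:gsp-perfect} applied at \emph{every} graded level, followed by a successive-approximation argument using closedness of the commutator subgroup. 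This is essentially an in-line proof of the content of the cited lemma, so your argument trades brevity for independence from the companion paper; both hinge on the same algebraic input, Lemma~\ref{lemma:gsp-perfect}. Two small points you should make explicit to be airtight: (i) to realize arbitrary classes $\bar X,\bar Y\in\mf{gsp}_{2g}(\ZZ/2\ZZ)$ by elements of $\ker\gify{2^\infty}{2}$ and $\ker\gify{2^\infty}{2^{k}}$ you need surjectivity of the reductions $\GSp_{2g}(\ZZ_2)\to\GSp_{2g}(\ZZ/2^{m}\ZZ)$ (standard, by smoothness, and implicit in the paper's identifications of kernels with Lie algebras); and (ii) individual commutators only give individual brackets $[\bar X,\bar Y]$, so you should note that the subgroup they generate maps, under the homomorphism to the (abelian) graded quotient, onto the additive span of such brackets, which is what Lemma~\ref{lemma:gsp-perfect} computes to be all of $\mf{sp}_{2g}(\ZZ/2\ZZ)$. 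Neither point is a real gap.
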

\begin{proof}
Clearly \([\ker \gify{2^\infty}{2}, \ker \gify{2^\infty}{2}] \subset \ker \fy{2^\infty}{4}\), so it suffices to prove the reverse inclusion. By~\cite[Lemma 2.11]{landesman-swaminathan-tao-xu:rational-families}, we have that $[\ker \gify{2^\infty}{2}, \ker \gify{2^\infty}{2}] \supset \ker \fy{2^\infty}{8}$. Then, identifying $\mf{gsp}_{2g}(\ZZ/ 2 \ZZ)$ with $\ker \gify{8}{4}$ and $\mf{sp}_{2g}(\ZZ/2\ZZ)$ with $\ker \fy{8}{4}$, we have by Lemma~\ref{lemma:gsp-perfect} that
\begin{align*}
\ker \fy{8}{4} & = \left[ \ker \gify{8}{4},\ker \gify{8}{4} \right] = \left[ \ker \gify{2^\infty}{4},\ker \gify{2^\infty}{4} \right](8) \\
& \subset \left[ \ker \gify{2^\infty}{2},\ker \gify{2^\infty}{2} \right](8)
\end{align*}
It follows that $\left[ \ker \gify{2^\infty}{2},\ker \gify{2^\infty}{2} \right] \supset \ker \fy{2^\infty}{4}$.
\end{proof}

\begin{corollary} \label{lemma:contains-mod-4}
	We have $\ker \fy{2^\infty}{4} \subset [H, H]$.
\end{corollary}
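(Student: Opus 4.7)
The plan is to derive this corollary as an almost immediate consequence of Corollary \ref{corollary:coolwhip} combined with the structural hypothesis on $H$. Corollary \ref{corollary:coolwhip} identifies $\ker \fy{2^\infty}{4}$ with the commutator subgroup $[\ker \gify{2^\infty}{2}, \ker \gify{2^\infty}{2}]$, so it suffices to prove the containment
\[
[\ker \gify{2^\infty}{2}, \ker \gify{2^\infty}{2}] \subset [H, H].
\]

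The key observation is that the hypothesis $H = \gify{2^\infty}{2}^{-1}(H(2))$ says precisely that $H$ is the full preimage of its mod-$2$ reduction under the projection to $\GSp_{2g}(\bz/2\bz)$. Since $H(2)$ is a subgroup of $\GSp_{2g}(\bz/2\bz)$ it contains the identity element, and therefore its full preimage $H$ contains the preimage of the identity, namely $\ker \gify{2^\infty}{2}$. First I would record this inclusion $\ker \gify{2^\infty}{2} \subset H$, and then note that passing to commutator subgroups on both sides yields the desired containment, at which point invoking Corollary \ref{corollary:coolwhip} finishes the argument.

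There is essentially no obstacle to overcome here: all of the substantive content has been absorbed into Lemma \ref{lemma:gsp-perfect} (the Lie-algebra calculation showing that $\mathfrak{sp}_{2g}(\bz/2\bz)$ is the commutator of $\mathfrak{gsp}_{2g}(\bz/2\bz)$) and Corollary \ref{corollary:coolwhip}. It is worth flagging that the hypothesis that $H(2)$ contains $S_{2g+1}$ plays no role in proving statement \ref{property-a}; that hypothesis is reserved for establishing statement \ref{property-b} in Proposition \ref{lemma:contains-mod-2}, where one genuinely needs a large enough mod-$2$ image to force commutators in $H(4)$ to hit the whole kernel $\ker \fy{4}{2}$.
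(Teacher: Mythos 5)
Your proposal is correct and matches the paper's own proof: the hypothesis $H = \gify{2^\infty}{2}^{-1}(H(2))$ gives $\ker \gify{2^\infty}{2} \subset H$, hence $[\ker \gify{2^\infty}{2}, \ker \gify{2^\infty}{2}] \subset [H,H]$, and Corollary~\ref{corollary:coolwhip} concludes. Your side remark that the $S_{2g+1}$ hypothesis is only needed for statement~\ref{property-b} is also accurate.
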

\begin{proof}
	The hypothesis that
    \(
    	H = \gify{2^\infty}{2}^{-1}(H(2))
    \)
    implies that $\ker \gify{2^\infty}{2} \subset H$, and hence $[\ker \gify{2^\infty}{2}, \ker \gify{2^\infty}{2}] \subset [H, H]$. Applying Corollary~\ref{corollary:coolwhip} then yields the desired result.
\end{proof}

\subsubsection{Proving Statement (B): Tensor Product Notation}\label{tomswifties}

Just as we did in the proof of Lemma~\ref{lemma:gsp-perfect}, we must choose a basis with respect to which our symplectic form has an easy-to-use matrix representation. The goal of this subsection is to choose such a basis and to develop a shorthand notation for this basis. In Sections~\ref{subsection:s-action} and~\ref{subsection:finishing-the-proof}, we use this notation to prove Statement (B), thereby completing the proof of Theorem~\ref{theorem:small-ab}.

Recall the notation introduced in the first paragraph of Section~\ref{subsection:stimpy}: $R$ is a commutative ring (which we will take to be either $\ZZ_2$ or $\ZZ/4\ZZ$), and $M$ is a free $R$-module of rank $2g$. We choose a basis $(e_1, \ldots, e_{2g})$ for $M$ so that the symplectic form $\langle -, -\rangle$ is given by
$$\langle e_i , e_j \rangle \defeq \begin{cases} j-i & \text{ if $|j-i| = 1$ and $\max\{i,j\} \equiv 0\,(\bmod\,2$) } \\ 0 & \text{ otherwise} \end{cases}$$

We may alternatively construct $M$ as follows. Let $N_1 \simeq R^{2}$ have basis $(x_1, x_2)$ and let $N_2 \simeq R^{g}$ have basis $(y_1, \ldots, y_g)$. Endow $N_1$ with the alternating form given by $\langle x_i, x_j \rangle = j - i$, and endow $N_2$ with the symmetric form given by $\langle y_i, y_j \rangle = \delta_{ij}$, where $\delta_{ij}$ denotes the Kronecker $\delta$-function as usual. Then if we take $M \defeq N_1 \otimes N_2$, we have that $(x_i \otimes y_j : i \in \{1,2\}, \, j \in \{1, \dots, g\} )$ is a basis for $M$ and that $M$ is equipped with an alternating form defined on simple tensors by
\[
\langle a_1 \otimes b_1, a_2 \otimes b_2 \rangle \defeq \langle a_1, a_2 \rangle \cdot \langle b_1, b_2 \rangle.
\]
Note that the map sending $x_i \otimes y_j \mapsto e_{2j + i - 2}$ gives an identification between our two different constructions of $M$.

Linear operators on $M$ are $R$-linear combinations of tensor products of linear operators on $N_1$ with linear operators on $N_2$. If we denote by $x_{ij}$ the row-$i$, column-$j$ elementary matrix acting on $N_1$ and by $y_{mn}$ the row-$m$, column-$n$ elementary matrix acting on $N_2$, then a basis for $\on{End}(M)$ is given by $(x_{ij} \otimes y_{mn} : i,j \in \{1, 2\},\, m,n \in \{1, \dots, g\})$. Also notice that any element $\Lambda \in \on{End}(M)$ may be expressed as
\begin{equation}\label{fouseytattoo}
\Lambda =  \sum_{i=1}^g \sum_{j=1}^g \Lambda_{ij} \otimes y_{ij}
\end{equation}
where $\Lambda_{ij} \in \on{End}(N_1)$ for all $i,j \in \{1,\dots, g\}$.
\begin{proposition} \label{proposition:lie}
	Let $\phi \in \on{End}(\on{End}(N_1))$ be defined by
	\[
	x_{11} \mapsto -x_{22}, \quad x_{22} \mapsto -x_{11}, \quad x_{12} \mapsto x_{12}, \quad x_{21} \mapsto x_{21}.
	\]
The Lie algebra $\mf{gsp}_{2g}(R)$ consists of those elements $\Lambda \in \on{End}(M)$ with $\Lambda_{ij} \in \on{End}(N_1)$ such that there exists $d \in R$ satisfying
    $$\phi(\Lambda_{ji}) = \Lambda_{ij} - (d\delta_{ij}) \cdot \id_2.$$
Moreover, $\mf{sp}_{2g}(R)$ admits an analogous description in which $d$ is required to be zero.
\end{proposition}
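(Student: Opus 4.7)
The plan is to unpack the defining condition of $\mf{gsp}_{2g}(R)$ using the tensor-product decomposition $M = N_1 \otimes N_2$ and to recognize $\phi$ as (minus) the symplectic adjoint operator on $\on{End}(N_1)$.

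First I would recast the Lie algebra condition in bilinear-form language: $\Lambda \in \mf{gsp}_{2g}(R)$ if and only if there exists $d \in R$ such that
\[
\langle \Lambda v, w \rangle + \langle v, \Lambda w \rangle = d \cdot \langle v, w \rangle \quad \text{for all } v, w \in M,
\]
with $\mf{sp}_{2g}(R)$ being the subset cut out by $d = 0$. This is a straightforward reformulation of $\Lambda^T \Omega_{2g} + \Omega_{2g}\Lambda = d \cdot \Omega_{2g}$.

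Next I would expand the condition on pure tensors $v = a_1 \otimes y_k$ and $w = a_2 \otimes y_l$. Using the decomposition \eqref{fouseytattoo} together with the computation $\Lambda(a \otimes y_k) = \sum_i (\Lambda_{ik} a) \otimes y_i$ and orthonormality of the $y_i$'s, the two pairings reduce to $\langle \Lambda_{lk} a_1, a_2\rangle$ and $\langle a_1, \Lambda_{kl} a_2\rangle$ respectively, while the right-hand side becomes $d\delta_{kl}\langle a_1, a_2\rangle$. Writing $A^*$ for the symplectic adjoint on $\on{End}(N_1)$, defined by $\langle A u, v\rangle = \langle u, A^* v \rangle$, and using nondegeneracy of the form on $N_1$, the condition collapses to the purely algebraic identity
\[
\Lambda_{lk}^* + \Lambda_{kl} \;=\; d \delta_{kl}\cdot \id_2 \quad \text{for all } k, l \in \{1, \dots, g\}.
\]

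Finally I would identify the map $\phi$ with $-(-)^*$ by a direct check on the elementary basis $\{x_{11}, x_{12}, x_{21}, x_{22}\}$ of $\on{End}(N_1)$: computing $A^* = \Omega_2^{-1} A^T \Omega_2$ with $\Omega_2 = \bigl[\begin{smallmatrix} 0 & 1 \\ -1 & 0 \end{smallmatrix}\bigr]$ shows $x_{11}^* = x_{22}$, $x_{22}^* = x_{11}$, $x_{12}^* = -x_{12}$, and $x_{21}^* = -x_{21}$, which matches the prescribed action of $\phi$ after negation. Substituting $\Lambda_{lk}^* = -\phi(\Lambda_{lk})$ into the displayed identity and relabelling $(l,k) \to (j,i)$ yields the claimed characterization $\phi(\Lambda_{ji}) = \Lambda_{ij} - (d\delta_{ij})\cdot \id_2$; the parallel statement for $\mf{sp}_{2g}(R)$ follows by tracking $d = 0$ throughout.

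The main (mild) obstacle is bookkeeping of sign conventions in the identification $\phi = -(-)^*$, and in keeping straight which index runs where in the bilinear pairings of simple tensors; once these are fixed, the result is a direct unraveling of definitions and no substantial new idea is required.
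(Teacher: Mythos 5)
Your proposal is correct and is essentially the paper's argument in slightly different clothing: both expand $\Lambda = \sum_{i,j}\Lambda_{ij}\otimes y_{ij}$ in the tensor decomposition $M = N_1 \otimes N_2$ and reduce the defining identity of $\mf{gsp}_{2g}(R)$ to a blockwise condition on the $\Lambda_{ij}$, identifying $\phi$ with the conjugated transpose on $\on{End}(N_1)$. The only difference is presentational — you phrase the reduction via the bilinear form on simple tensors and the symplectic adjoint $A^* = \Omega_2^{-1}A^T\Omega_2$ (so $\phi(A) = -A^*$), whereas the paper substitutes $\Omega_{2g} = (x_{12}-x_{21})\otimes \id_g$ directly into the matrix equation and reads off $(x_{12}-x_{21})\Lambda_{ji}^T(x_{12}-x_{21}) = \phi(\Lambda_{ji})$.
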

\begin{proof}
Since $\Omega_{2g}^2 = -\id_{2g}$, the defining equation for $\mf{gsp}_{2g}(R)$ is equivalent to
\begin{align}	
\Omega_{2g} \Lambda^T \Omega_{2g} - \Lambda + d \cdot (\id_2 \otimes \id_g) & = 0. \label{equation:lie-algebra-condition}
\end{align}
Note that the identity element $\on{End}(N_2)$ is given by $\id_g = y_{11} + \cdots + y_{gg}$. Substituting this in along with the expansion~\eqref{fouseytattoo} for $\Lambda$ as well as $(x_{12} - x_{21}) \otimes \id_g$ for $\Omega_{2g}$ on the left-hand side of~\eqref{equation:lie-algebra-condition} yields that
\[
\sum_{i =1}^g \sum_{j=1}^g \big[ (x_{12} - x_{21})(\Lambda_{ji})^T(x_{12} - x_{21}) - \Lambda_{ij} + (d\delta_{ij}) \cdot \id_2 \big] \otimes y_{ij} = 0,
\]
which is equivalent to the following condition:
\[
(x_{12} - x_{21})(\Lambda_{ji})^T(x_{12} - x_{21}) = \Lambda_{ij} - (d\delta_{ij}) \cdot \id_2
\]
The desired result then follows upon observing that
	\(
	(x_{12} - x_{21})(\Lambda_{ji})^T(x_{12} - x_{21}) = \phi(\Lambda_{ji}). \qedhere
	\)
\end{proof}

\begin{remark} \label{remark:sp-basis}
	When $R = \ZZ/2\ZZ$, minus signs may be ignored, so the operator $\phi$ may be concisely described as transposition across the anti-diagonal. It follows from Proposition~\ref{proposition:lie} that the following is a basis for $\mf{sp}_{2g}(\ZZ/2\ZZ)$:
	\begin{align*}
    & \big(\id_2 \otimes y_{ii}, x_{12} \otimes y_{ii}, x_{21} \otimes y_{ii} : i \in \{1, \dots, g\}\big)\,\, \cup \\
    & \big(x_{12} \otimes (y_{ij}+y_{ij}) , x_{11} \otimes y_{ij} + x_{22} \otimes y_{ji} ,  x_{21} \otimes (y_{ij} + y_{ji}) , x_{22} \otimes y_{ij} + x_{11} \otimes y_{ji} : 1 \leq i < j \leq g \big).
\end{align*}
In Section~\ref{subsection:finishing-the-proof}, it will be convenient to define a function $\on{ind}$ that assigns to each of the above basis elements the value of $i$ (e.g., $\on{ind}(\id_2 \otimes y_{ii}) = i$ and $\on{ind}(x_{12} \otimes (y_{ij} + y_{ij})) = i$).
\end{remark}

\subsubsection{Proving Statement (B): Describing the Action of $S_{2g+2}$} \label{subsection:s-action}

We now seek to describe the embedding $S_{2g+2} \hookrightarrow \Sp_{2g}(\bz / 2 \bz)$ from Lemma~\ref{lemma:include-s} in terms of the tensor product notation that we just introduced in Section~\ref{tomswifties}. To this end, we set $R = \ZZ/2\ZZ$, so that $M \simeq \FF_2^{2g}$.
\begin{lemma} \label{lemma:basis}
	Recall notation from the proof of Lemma~\ref{lemma:include-s}. The map $\psi: M \ra t^\perp / \langle t \rangle$ of symplectic vector spaces defined by
	\[ x_1 \otimes y_n \mapsto \sum_{i=1}^{2n} e_i \quad \text{and} \quad
	x_2 \otimes y_n \mapsto e_{2n+1} + \sum_{i=1}^{2n-1} e_i \quad \text{for each} \quad n \in \{1, \dots, g\}
	\]
    is an isomorphism.
\end{lemma}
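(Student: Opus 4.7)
The plan is to verify in turn (i) that $\psi$ is well-defined as a map into $t^\perp/\langle t \rangle$, (ii) that it preserves the symplectic forms on both sides, and (iii) that it is bijective, with the last step following automatically once form-preservation is established, via a dimension count.

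For step (i), one checks directly that each image $\psi(x_i \otimes y_n)$ lies in $t^\perp \subset V$, i.e., has support of even size in the basis $(e_1,\dots,e_{2g+2})$. The support of $\psi(x_1 \otimes y_n)$ is $\{1,\dots,2n\}$, of size $2n$, and the support of $\psi(x_2 \otimes y_n)$ is $\{1,\dots,2n-1,2n+1\}$, also of size $2n$. Hence $\psi$ extends $\FF_2$-linearly to a well-defined map $M \to W \defeq t^\perp/\langle t \rangle$.

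For step (ii), I would verify $\langle \psi(a), \psi(b)\rangle_W = \langle a, b\rangle_M$ for $a,b$ ranging over the basis $\{x_i \otimes y_n\}$. The form on $M$ is the tensor of the alternating form on $N_1$ (with $\langle x_1, x_2\rangle = 1$ in $\FF_2$) with the form $\delta_{ij}$ on $N_2$, so the only basis pairings that are nonzero are $\langle x_1 \otimes y_m, x_2 \otimes y_n\rangle = \delta_{mn}$ (together with its transpose). On the $V$ side, the standard inner product of two $0/1$-vectors in $\FF_2^{2g+2}$ is the parity of the size of the intersection of their supports. A short case analysis, split on which of $x_1, x_2$ appears on each side and on the ordering of $m$ and $n$, shows that every intersection has even size except in the diagonal case $\langle \psi(x_1 \otimes y_m), \psi(x_2 \otimes y_m)\rangle$, where the intersection is $\{1,\dots,2m-1\}$, of odd size $2m-1$. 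This reproduces the formula $\delta_{mn}$, so $\psi$ is form-preserving (the fact that the computation took place in $V$ rather than $W$ does not matter, since $t \in t^\perp$ means the form descends to $W$ consistently).

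For step (iii), because the symplectic form on $M$ is nondegenerate and $\psi$ preserves the form, any vector in $\ker\psi$ would be orthogonal to all of $M$ and therefore zero; so $\psi$ is injective. Since $\dim_{\FF_2} M = 2g = \dim_{\FF_2} W$, the map $\psi$ is a bijection, hence an isomorphism of symplectic vector spaces. The only delicate point is the bookkeeping in step (ii): the vector $\psi(x_2 \otimes y_n)$ involves the ``skipping'' entry $e_{2n+1}$ rather than a clean initial segment, so the three subcases $m < n$, $m = n$, and $m > n$ must be handled separately. Nothing is conceptually deep; everything reduces to a finite verification in $\FF_2$.
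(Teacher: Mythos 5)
Your proof is correct and follows the same route as the paper, which simply asserts that $\psi$ identifies the symplectic forms of $M$ and $t^\perp/\langle t\rangle$ and concludes; you have merely filled in the support-parity computation and the standard nondegeneracy-plus-dimension-count argument that the paper leaves implicit. No gaps.
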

\begin{proof}
The lemma follows immediately from the observation that $\psi$ identifies the symplectic forms of $M$ and $t^\perp / \langle t \rangle$.
\end{proof}
Recall that the group $S_{2g+2}$ is generated by the adjacent transpositions $T_k$ for $k \in \{1, \dots, 2g+1\}$ whose cycle types are given by $T_k = (k, k+1)$. We now compute the action of $T_k$ on $M$ for each $k$:
\begin{lemma} \label{lemma:s-action}
	When viewed as operators on $M$, the transpositions $T_k$ are given by
	\begin{align*}
	T_{2n} &= \id_{2g} + (x_{11} + x_{12} + x_{21} + x_{22}) \otimes y_{nn}, \\
	T_{2n+1} &= \id_{2g} + x_{12} \otimes (y_{nn} + y_{(n+1) n} + y_{n (n+1)} + y_{(n+1) (n+1)}),
	\end{align*}
	according as $k = 2n$ or $k = 2n+1$, where any term with an out-of-range index is zero.
\end{lemma}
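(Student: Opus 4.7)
The plan is to verify both identities by evaluating the left- and right-hand sides on the tensor-product basis $(x_\epsilon \otimes y_m : \epsilon \in \{1,2\},\, m \in \{1,\ldots,g\})$ of $M$ and comparing through the isomorphism $\psi$ of Lemma~\ref{lemma:basis}. Since $T_k$ acts on $V = \mathbb{F}_2^{2g+2}$ by swapping $e_k$ and $e_{k+1}$, computing $T_k \cdot \psi(x_\epsilon \otimes y_m)$ reduces to determining which of $e_k, e_{k+1}$ are among the summands of $\psi(x_\epsilon \otimes y_m)$, namely $\sum_{i=1}^{2m} e_i$ (when $\epsilon = 1$) or $e_{2m+1} + \sum_{i=1}^{2m-1} e_i$ (when $\epsilon = 2$). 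In $\mathbb{F}_2$, if both or neither of $e_k, e_{k+1}$ appear, the swap fixes the vector; only when exactly one appears does $T_k$ act nontrivially, in which case the modification is the addition of $e_k + e_{k+1}$.

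For $T_{2n}$, one observes that in $\psi(x_1 \otimes y_m)$ the entry $e_{2n}$ appears iff $m \geq n$ while $e_{2n+1}$ appears iff $m \geq n+1$, so exactly one of them appears precisely when $m = n$; the same case analysis applied to $\psi(x_2 \otimes y_m)$ again singles out $m = n$. Using the defining formulas for $\psi$ one computes $\psi^{-1}(e_{2n} + e_{2n+1}) = (x_1 + x_2) \otimes y_n$, and this matches the action of $(x_{11} + x_{12} + x_{21} + x_{22}) \otimes y_{nn}$ since $(x_{11} + x_{12} + x_{21} + x_{22}) \cdot x_\epsilon = x_1 + x_2$ for both $\epsilon \in \{1, 2\}$, while $y_{nn} \cdot y_m = \delta_{nm} y_n$.

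For $T_{2n+1}$, both $e_{2n+1}$ and $e_{2n+2}$ appear in $\psi(x_1 \otimes y_m)$ whenever $m \geq n+1$ (and neither appears when $m \leq n$), so $T_{2n+1}$ fixes every $\psi(x_1 \otimes y_m)$; this matches the right-hand side since $x_{12} \cdot x_1 = 0$. For $\psi(x_2 \otimes y_m)$, exactly one of the two entries appears precisely when $m \in \{n, n+1\}$, yielding a change of $e_{2n+1} + e_{2n+2} = \psi(x_1 \otimes (y_n + y_{n+1}))$. Direct computation shows $(y_{nn} + y_{(n+1)n} + y_{n(n+1)} + y_{(n+1)(n+1)}) \cdot y_m$ equals $y_n + y_{n+1}$ exactly for $m \in \{n, n+1\}$ and vanishes otherwise; coupled with $x_{12} \cdot x_2 = x_1$, this completes the verification.

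The main subtlety — and the step most in need of care — is the boundary cases $k = 2g$ and $k = 2g+1$, where the formula for $T_{2n+1}$ invokes indices $n+1 = g+1$ that are out of range and must be set to zero by convention. One has to check that the resulting truncated formula still agrees with $T_{2g+1}$; this works because $e_{2g+1} + e_{2g+2}$ is congruent modulo the relation $t = e_1 + \cdots + e_{2g+2} = 0$ in $t^\perp/\langle t\rangle$ to $\sum_{i=1}^{2g} e_i = \psi(x_1 \otimes y_g)$, so the apparent loss of the $y_{g+1}$ terms is absorbed automatically. Apart from this bookkeeping, everything reduces to routine $\mathbb{F}_2$ arithmetic and the defining relations $x_{ij} \cdot x_k = \delta_{jk} x_i$ and $y_{ij} \cdot y_m = \delta_{jm} y_i$.
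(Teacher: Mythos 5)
Your proof is correct and takes essentially the same route as the paper: a direct verification of the action of each $T_k$ on the basis vectors $\psi(x_\epsilon \otimes y_m)$ via Lemma~\ref{lemma:basis}, with your ``exactly one of $e_k, e_{k+1}$ appears'' criterion simply packaging uniformly the case-by-case computation (including the separate boundary treatment of $n=0$ and $n=g$) that the paper writes out explicitly. One minor slip: the boundary concern is with $k=1$ and $k=2g+1$ (the odd transpositions), not $k=2g$, which uses the even formula with all indices in range --- but your out-of-range-is-zero convention handles $k=1$ automatically, so nothing substantive is affected.
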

\begin{proof} 	
	The result for $k = 2n$ follows from the observation that $T_{2n}$ swaps $x_1 \otimes y_n$ with $x_2 \otimes y_n$ and keeps all the other basis vectors fixed. As for $k = 2n+1$, we break into three cases:
	\begin{enumerate}
		\item Suppose $n = 0$. The transposition $T_1$ sends
		$$\psi(x_2 \otimes y_1) = ( e_1 + e_3)  \mapsto (e_1 + e_2) + (e_1 + e_3)
		= \psi(x_1 \otimes y_1) + \psi(x_2 \otimes y_2)$$
		and fixes all other $\psi(x_i \otimes y_j)$. Thus, $T_1$ is given by
		\(
		T_1 = \id_{2g} + x_{12} \otimes y_{11}.
		\)
		\item Suppose $n = g$. The transposition $T_{2g+1}$ sends
		$$ \psi(x_2 \otimes y_g) = e_{2g + 1} + \sum_{i=1}^{2g-1} e_i  \mapsto  \sum_{i=1}^{2g+1} e_i + \sum_{i=1}^{2g-1} e_i  = e_{2g} + e_{2g+1} = \psi(x_1 \otimes y_g) + \psi(x_2 \otimes y_g) $$
		and fixes all other $\psi(x_i \otimes y_j)$. Thus, $T_g$ is given by
		\(
		T_{2g+1} = \id_{2g} + x_{12} \otimes y_{gg}.
		\)
		\item Finally, suppose $n \in \{1, \dots, g-1\}$. The transposition $T_{2n+1}$ sends
		\begin{align*}
		\phi(x_2 \otimes y_n) = e_{2n + 1} + \sum_{i=1}^{2n-1} e_i \,\,\, & \mapsto \,\,\, \sum_{i=1}^{2n+2} e_i + \left( e_{2n+1} + \sum_{i=1}^{2n-1} e_i \right) + \sum_{i=1}^{2n} e_i \\
		&\hphantom{\quad\mapsto}= \psi(x_1 \otimes y_{n+1}) + \psi(x_2 \otimes y_n) + \psi(x_1 \otimes y_n), \\
		\psi(x_2 \otimes y_{n+1})
		=  e_{2n + 3} + \sum_{i=1}^{2n+1} e_i \,\,\, &\mapsto \,\,\, e_{2n+3} + \sum_{i=1}^{2n+1} e_i + \sum_{i=1}^{2n+2} e_i + \sum_{i=1}^{2n} e_i \\
		&\hphantom{\quad\mapsto}= \psi(x_2 \otimes y_{n+1}) + \psi(x_1 \otimes y_{n+1}) + \psi(x_1 \otimes y_n),
		\end{align*}
		and fixes all other $\psi(x_i \otimes y_n)$. Thus, $T_{2n+1}$ is given by
		\[
		T_{2n+1} = \id_{2g} + x_{12} \otimes (y_{nn} + y_{(n+1) n} + y_{n (n+1)} + y_{(n+1) (n+1)}).
		\]
	\end{enumerate}
	The result for $k = 2n+1$ follows immediately from points (1)--(3) above.
\end{proof}

\subsubsection{Finishing the Proof of Statement (B)} 	
\label{subsection:finishing-the-proof}
\begin{proposition} \label{lemma:contains-mod-2}
	We have $[H(4), H(4)] \supset \ker \fy{4}{2}$.
\end{proposition}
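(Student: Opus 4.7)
The plan is to identify $\ker \fy{4}{2}$ with the additive group $\mf{sp}_{2g}(\FF_2)$ via the isomorphism $\Lambda \mapsto \id_{2g} + 2\Lambda$ mentioned in Section~\ref{subsection:stimpy}, and then to show that the image of $[H(4), H(4)] \cap \ker \fy{4}{2}$ under this identification is all of $\mf{sp}_{2g}(\FF_2)$.

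First, observe that $\ker \fy{4}{2}$ is automatically contained in $H(4)$: the hypothesis $H = \gify{2^\infty}{2}^{-1}(H(2))$ gives $\ker \gify{2^\infty}{2} \subset H$, and reducing modulo $4$ yields $\ker \fy{4}{2} \subset H(4)$. Moreover, $H(4) \twoheadrightarrow H(2)$ is surjective, so each transposition $T_k \in S_{2g+1} \subset H(2)$ admits a lift $\tilde T_k \in H(4)$. Given any $\Lambda \in \mf{sp}_{2g}(\FF_2)$ (lifted arbitrarily to an integer matrix, still denoted $\Lambda$), a direct computation modulo $4$ yields
\[
    [\tilde T_k,\; \id_{2g} + 2\Lambda] \;\equiv\; \id_{2g} + 2\bigl(T_k \Lambda T_k^{-1} - \Lambda\bigr) \pmod{4},
\]
with the right-hand side well-defined over $\FF_2$. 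Since $[H(4), H(4)]$ is a subgroup and $\ker \fy{4}{2}$ is abelian, the proposition reduces to the linear-algebra claim that the $\FF_2$-span
\[
    \on{span}_{\FF_2}\bigl\{\,T_k \Lambda T_k^{-1} - \Lambda \,:\, 1 \le k \le 2g+1,\; \Lambda \in \mf{sp}_{2g}(\FF_2)\,\bigr\}
\]
equals all of $\mf{sp}_{2g}(\FF_2)$.

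To verify this, I would use the explicit formulas for $T_k = \id_{2g} + \tau_k$ from Lemma~\ref{lemma:s-action} together with the basis of $\mf{sp}_{2g}(\FF_2)$ given in Remark~\ref{remark:sp-basis}. A short check (using that in characteristic $2$ the $2 \times 2$ all-ones matrix $J = x_{11}+x_{12}+x_{21}+x_{22}$ on $N_1$ squares to zero, and similarly for the $2 \times 2$ all-ones block on $N_2$ appearing in $\tau_{2n+1}$) shows that $\tau_k^2 = 0$, so $T_k^{-1} = T_k$ and
\[
    T_k \Lambda T_k^{-1} - \Lambda \;=\; \tau_k \Lambda + \Lambda \tau_k + \tau_k \Lambda \tau_k.
\]
The tensor decomposition $M = N_1 \otimes N_2$ makes the calculation block-local in the $N_2$-index: $\tau_{2n}$ is supported on the $n$-th diagonal block of $N_2$ (so it interacts only with components $\Lambda_{ni}$ and $\Lambda_{in}$ of $\Lambda$), while $\tau_{2n+1}$ couples the $n$-th and $(n+1)$-st blocks, allowing us to shift between adjacent indices and thereby reach the off-diagonal basis families.

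The main obstacle will be the final step: carrying out a case-by-case computation over the seven families of basis elements listed in Remark~\ref{remark:sp-basis}, organized by the value of $\on{ind}(\Lambda)$ and by the choice of $k \in \{2n-1, 2n, 2n+1\}$. This is routine but requires care to exhibit, for each basis element $\Lambda_0$, a specific pair $(k, \Lambda)$ whose commutator difference equals $\Lambda_0$ modulo elements already shown to lie in the span. Because the computation is block-local, it essentially reduces to two model calculations — one for $T_{2n}$ acting on basis elements of index $n$, and one for $T_{2n+1}$ acting on basis elements of indices $n$ and $n+1$ — which can then be combined (inductively in $g$, or equivalently by varying $n$) to sweep through every basis element of $\mf{sp}_{2g}(\FF_2)$.
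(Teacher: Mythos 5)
Your overall strategy is the paper's: lift the generating transpositions to $H(4)$, commute them against elements $\id_{2g}+2\Lambda$ of the mod-$4$ kernel, and reduce to a spanning statement in characteristic $2$ proved via Lemma~\ref{lemma:s-action}. But there is a genuine gap in the reduction. You only allow $\Lambda \in \mf{sp}_{2g}(\FF_2)$, i.e.\ you only commute against elements of $\ker \fy{4}{2}$, whereas the hypothesis $H=\gify{2^\infty}{2}^{-1}(H(2))$ puts all of $\ker\gify{4}{2}\cong \mf{gsp}_{2g}(\FF_2)$ at your disposal -- in particular elements $\id_{2g}+2\Lambda$ with multiplier $\equiv 3 \pmod 4$, such as $\Lambda = x_{11}\otimes\id_g$, which the paper uses in an essential way to produce the diagonal generators $\id_2\otimes y_{nn}$ and $x_{12}\otimes(y_{(n-1)(n-1)}+\cdots+y_{nn})$. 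With your restriction the ``linear-algebra claim'' you reduce to is false. Also note that the hypothesis only provides $S_{2g+1}$, the subgroup fixing the last coordinate, so only $T_1,\dots,T_{2g}$ are guaranteed to lift; your range $1\le k\le 2g+1$ includes $T_{2g+1}\notin S_{2g+1}$, which you may not use (and this matters: for $i=1$ in Lemma~\ref{theorem:r=2} the extra transposition is genuinely absent).

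Concretely, for $g=2$ and $k\le 4$ the span $\on{span}_{\FF_2}\{T_k\Lambda T_k^{-1}-\Lambda : \Lambda\in\mf{sp}_4(\FF_2)\}$ is a proper subspace of $\mf{sp}_4(\FF_2)$: the linear functional $\psi$ defined on the basis of Remark~\ref{remark:sp-basis} by
\[
\psi(\id_2\otimes y_{11})=\psi(\id_2\otimes y_{22})=\psi(x_{12}\otimes y_{11})=\psi(x_{21}\otimes y_{11})=1,
\]
and $\psi=0$ on the remaining six basis vectors, is invariant under conjugation by $T_1,T_2,T_3,T_4$ (a direct check using $T_1=\id+x_{12}\otimes y_{11}$, $T_{2n}=\id+(x_{11}+x_{12}+x_{21}+x_{22})\otimes y_{nn}$, $T_3=\id+x_{12}\otimes(y_{11}+y_{12}+y_{21}+y_{22})$; e.g.\ $(1+\on{Ad}T_1)(x_{21}\otimes y_{11})=\id_2\otimes y_{11}+x_{12}\otimes y_{11}$ and $(1+\on{Ad}T_3)(x_{21}\otimes y_{11})=\id_2\otimes y_{11}+x_{12}\otimes y_{11}+x_{12}\otimes y_{22}+x_{12}\otimes(y_{12}+y_{21})+(x_{22}\otimes y_{12}+x_{11}\otimes y_{21})$, on which $\psi$ vanishes). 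Hence all your commutator differences lie in $\ker\psi$, and in particular $\id_2\otimes y_{11}$ is unreachable, so the inductive sweep you propose cannot be completed. The fix is exactly the paper's: take $\Lambda$ in all of $\mf{gsp}_{2g}(\FF_2)$ (equivalently, commute the lifted transpositions against all of $\ker\gify{4}{2}$, not just $\ker\fy{4}{2}$); the resulting differences still land in $\mf{sp}_{2g}(\FF_2)$ because conjugation by a symplectic element preserves the multiplier parameter, and they do span, as in \eqref{jc1}--\eqref{jc8}.
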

\begin{proof}
	The assumption that
    \(
    	H = \gify{2^\infty}{2}^{-1}(H(2))
    \)
    implies that
	\(
	\ker \gify{4}{2} \subset H(4).
	\)
	Recall that we may identify $\ker \gify{4}{2}$ with $\mf{gsp}_{2g}(\ZZ/2\ZZ)$, so that each $S \in \ker \gify{4}{2}$ may be expressed as $S = \id_{2g} + 2\Lambda$ where $\Lambda \in \mf{gsp}_{2g}(\bz / 2 \ZZ)$. The assumption that $H(2)$ contains $S_{2g+1}$ tells us that for any $M_2 \in S_{2g+1} \subset \Sp_{2g}(\bz / 2 \bz)$, we may lift $M_2$ to an element $M_4 \in H(4)$. In particular, we have that
$$\id_{2g} + 2(\Lambda + M_2 \Lambda M_2^{-1}) = (\id_{2g} + 2\Lambda)^{-1}M_4(\id_{2g} + 2\Lambda)M_4^{-1} \in [H(4), H(4)].$$
To complete the proof, it suffices to show that matrices of the form $\Lambda + M_2 \Lambda M_2^{-1}$ span all of $\mf{sp}_{2g}(\bz / 2\bz)$. Let $V = \on{span}(\Lambda + M_2 \Lambda M_2^{-1} : \Lambda \in \mf{gsp}_{2g}(\ZZ/2\ZZ) \text{ and } M_2 \in S_{2g+1})$.
	
	It suffices to restrict our consideration to matrices $M_2$ corresponding to transpositions $T_k \in S_{2g+1}$. Note that $T_k = (T_k)^{-1}$, so that if we write $T_k = \id_{2g} + N_k$, then we have
    \begin{align} \label{equation:nuns-on-the-run}		
	\Lambda + T_k\Lambda(T_k)^{-1} = N_k\Lambda + \Lambda N_k + N_k \Lambda N_k.
	\end{align}	
	As in Lemma~\ref{lemma:s-action}, we will have to treat the cases $k = 2n$ and $k = 2n+1$ separately. In what follows, we induct on the value of the $\on{ind}$ function that $V$ contains the seven types of basis elements listed in Remark~\ref{remark:sp-basis}.
   First, however, we perform some calculations that serve to greatly simplify this inductive argument. Combining Lemma~\ref{lemma:s-action} with~\eqref{equation:nuns-on-the-run} and taking $\Lambda = x_{11} \otimes \id_g$, we find that
\begin{align}
\Lambda + T_{2n} \Lambda T_{2n} & = \id_2 \otimes y_{nn} \in V, \label{jc1}\\
\Lambda + T_{2n-1} \Lambda T_{2n-1} & = x_{12} \otimes (y_{(n-1)(n-1)} + y_{(n-1)n} + y_{n(n-1)} + y_{nn}) \in V. \label{jc2} \\
\intertext{Repeating this calculation for $k = 2n$ but taking $\Lambda = x_{12} \otimes y_{nn}$, we find that}
	\Lambda + T_{2n} \Lambda T_{2n} & = (x_{12} + x_{21}) \otimes y_{nn} \in V. \label{jc9} \\
\intertext{Now fix $n, \ell$ with $\ell > n$, and take $\Lambda = M \otimes y_{n\ell} + \phi(M) \otimes y_{\ell n}$ for any $M \in \on{Mat}_{2 \times 2}(\bz / 2 \bz)$. By Proposition~\ref{proposition:lie}, all such $\Lambda$ are elements of $\mf{gsp}_{2g}(\bz/2\bz)$. We find that}
\Lambda + T_{2n} \Lambda T_{2n} & = (x_{11} + x_{12} + x_{21} + x_{22})M \otimes y_{n\ell} \, +  \label{jc5}\\
& \hphantom{===} \phi(M)(x_{11} + x_{12} + x_{21} + x_{22}) \otimes y_{\ell n} \in V, \nonumber \\
\Lambda + T_{2n-1}\Lambda T_{2n-1} & = x_{12}M \otimes (y_{n \ell} + y_{(n-1)\ell}) + \phi(M) x_{12} \otimes (y_{\ell n} + y_{\ell (n-1)}) \in V \label{jc3}. \\
\intertext{Taking $M = x_{11}$ in~\eqref{jc5}, so that $\phi(M) = x_{22}$, yields that}
	& (x_{11} + x_{21}) \otimes y_{n \ell} + (x_{21} + x_{22}) \otimes y_{\ell n} \in V, \label{jc6} \\
    \intertext{and taking $M = x_{22}$ in~\eqref{jc5}, so that $\phi(M) = x_{11}$, yields that}
	& (x_{12} + x_{22}) \otimes y_{n\ell} + (x_{11} + x_{12}) \otimes y_{\ell n} \in V. \label{jc7} \\
\intertext{Taking $M = x_{22}$ in~\eqref{jc3}, so that $\phi(M) = x_{11}$, yields that}
& x_{12} \otimes (y_{n \ell} + y_{\ell n}) \in V \label{jc4} \\
\intertext{and taking $M = x_{21}$ in~\eqref{jc4}, so that $\phi(M) = x_{21}$, yields that}
&  x_{11} \otimes y_{n \ell} + x_{22} \otimes y_{\ell n} \in V. \label{jc8}
\end{align}
We are now ready to carry out the induction. For the base case, we need to check that all basis vectors with $\on{ind}$-value equal to $1$ are in $V$; this follows immediately upon taking $n = 1$ in~\eqref{jc1}--\eqref{jc8}. Next, suppose for some $N \in \{1, \dots, g\}$ we have that all basis vectors with $\on{ind}$-value less than $N$ are in $V$. Taking $n = N$ in~\eqref{jc1}--\eqref{jc8} and applying the inductive hypothesis yields that all basis vectors with $\on{ind}$-value equal to $N$ are in $V$.
\end{proof}

\section{Proof of Theorem~\ref{mainbldg}}
\label{section:proof-of-mainbldg}

In this section, we prove the first main result of this paper, namely Theorem~\ref{mainbldg}. We begin in Section~\ref{mygawdsadiomaneisamazing} with a description of the relevant background material on Galois representations of PPAVs. Then, in Section~\ref{prelimswine},
we prove a group-theoretic Lemma, useful for determining $\delta_K$.
In Section~\ref{symbed}, we describe the particular manner in which we embed $S_{2g+2}$ as a subgroup of $\Sp_{2g}(\ZZ/2 \ZZ)$.
In Section~\ref{subsection:monodromy-of-families}
we determine the monodromy groups of the four families of hyperelliptic curves introduced in Definition~\ref{definition:standard-families} and the monodromy of the universal family over the moduli stack of hyperelliptic curves. Finally, in Section~\ref{32isnotagoodscoreline}, we complete the proof of Theorem~\ref{mainbldg}.

\subsection{Background}\label{mygawdsadiomaneisamazing}

Let $K$ be a number field, let $r \geq 0$ be an integer, and let $U \subset \mathbb{P}_K^r$ be an open subscheme. For an integer $g \geq 0$, let $A$ be a family of $g$-dimensional PPAVs over $U$, by which we mean that $A$ is an abelian scheme over $U$, meaning that $A \rightarrow U$ is a proper smooth group scheme with geometrically connected fibers of dimension $g$, and $A$ is equipped with a principal polarization over $U$.  Because the base $U$ is rational, we call $A \to U$ a \emph{rational family}. By construction, the fiber $A_u$ of the morphism $A \to U$ over any $K$-valued point $u \in U(K)$ is a $g$-dimensional PPAV over $K$.

Recall that the action of the \'{e}tale fundamental group $\pi_1(U)$ on the torsion points of a chosen geometric generic fiber of $A \to U$ gives rise to a continuous linear representation whose image is constrained by the Weil pairing to lie in the general symplectic group $\GSp_{2g}(\wh{\ZZ})$. We denote the resulting \emph{adelic representation} by
\begin{equation}\label{thisisthepartofme}
	\rho_A \colon \pi_1(U) \to \GSp_{2g}(\wh{\ZZ}).\footnote{The map in~\eqref{thisisthepartofme} is well-defined up to the choice of base-point, and choosing a different base-point would only alter the image of $\rho_{A}$ by an inner automorphism.
For this reason, when it will not lead to confusion, we may omit
the base-point from our notation.}
\end{equation}

We now define the monodromy groups associated to $\rho_A$. We call the image of $\rho_A \colon \pi_1(U) \to \GSp_{2g}(\wh{\ZZ})$ the {\it monodromy} of the family $A \to U$, and we denote it by $\mono_A$. We write $\mono_A(m)$ for the mod-$m$ reductions and $\mono_{A,\ell}$ for the $\ell$-adic reductions of the above-defined monodromy groups.

\begin{remark}
Let $u \in U(K)$ be a $K$-valued point. Precomposing the adelic representation with the induced map $\pi_1(u) \to \pi_1(U)$ gives a representation $\pi_1(u) \to \GSp_{2g}(\wh{\ZZ})$ whose image we denote by $\mono_{A_u}$.
Because $\pi_1(u) \simeq G_K$,
the representation $\rho_{A_u}$ obtained by restricting $\rho_A$ to $A_u$
is the same as the adelic representation $\rho_{A_u}$ discussed in Section~\ref{subsection:intro-background}.
\end{remark}

\begin{remark}
	\label{remark:det-rho-is-chi}
For a commutative ring $R$, recall from the definition of the general symplectic group that we have a multiplier map $\on{mult} \colon \GSp_{2g}(R) \to R^\times$. If $\chi$ denotes the cyclotomic character, then for a PPAV $A$ it follows from $G_K$-invariance of the Weil pairing that $\chi = \mult \circ \rho_{A}$.
More generally, if $A \rightarrow U$ is a family
of PPAVs with $U$ normal and integral, and if $\phi$ denotes the map $\pi_1(U) \rightarrow \pi_1(\spec K)$ induced by the structure map $U \to \spec K$, then we have that
$\chi \circ \phi = \mult \circ \rho_A$
\end{remark}

\subsection{Computing $\delta_K$}
\label{prelimswine}

In this section, we prove Lemma~\ref{theorem:r=2},
which is used to compute the value of $\delta_K$ in the
proof of Theorem~\ref{mainbldg}, given in Section~\ref{subsection:proof-of-first-claim}.
In order to state Lemma~\ref{theorem:r=2},
we need the following definition, in which
we introduce notation used throughout the paper to denote various lifts of $S_{2g+i}$:

\begin{definition}\label{grpnotes}
	For $i \in \left\{ 1,2 \right\}$,
	we define
	\begin{equation*}
 \hyphat_{2g+i} \defeq (\Sp_{2g}(\zh) \to \Sp_{2g}(\bz / 2 \bz))^{-1}(S_{2g+i}) \qquad \text{and} \qquad \hypaddict_{2g+i} \defeq \fy{2^\infty}{2}^{-1}(S_{2g+i}).
\end{equation*}
\end{definition}
The next lemma applies Theorem~\ref{theorem:small-ab} to determine how large the commutator subgroup of $\widetilde{\mathrm{G} \mathcal {S}}_{2g+i, K},$ is as a subgroup of $\hyphat_{2g+i}$:
\begin{lemma} \label{theorem:r=2}
	Let $g \ge 2$, let $i \in \left\{ 1,2 \right\}$
	and let $H \subset \GSp_{2g}(\zh)$ be a closed subgroup. Suppose that
\begin{itemize}
\item $H_2 = \gify{2^\infty}{2}^{-1}(S_{2g+i})$, and
\item $H(\ell) \supset \Sp_{2g}(\bz / \ell \bz)$ for $\ell \ge 3$.
\end{itemize}
Then
\[
[H, H] = \fy{2^\infty}{2}^{-1}(A_{2g+i}) \times \prod_{\ell \ge 3} \Sp_{2g}(\bz_\ell),
\]
where $A_{2g+i}$ denotes the alternating group on $2g + i$ letters.
\end{lemma}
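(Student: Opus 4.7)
The plan is to compute $[H,H]$ prime by prime, applying Theorem~\ref{theorem:small-ab} at $\ell=2$, using perfectness of $\Sp_{2g}(\bz_\ell)$ at odd primes, and then gluing the per-prime data.

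For the 2-adic factor, I would apply Theorem~\ref{theorem:small-ab} directly to the projection $H_2$ of $H$ onto $\GSp_{2g}(\bz_2)$. The hypothesis $H_2 = \gify{2^\infty}{2}^{-1}(S_{2g+i})$ translates to $H_2 = \gify{2^\infty}{2}^{-1}(H_2(2))$ with $H_2(2) = S_{2g+i}$, and since $i \in \{1,2\}$ we have $S_{2g+i} \supset S_{2g+1}$; thus Theorem~\ref{theorem:small-ab} applies and yields
\[
[H_2, H_2] \;=\; \fy{2^\infty}{2}^{-1}\bigl([S_{2g+i}, S_{2g+i}]\bigr) \;=\; \fy{2^\infty}{2}^{-1}(A_{2g+i}),
\]
where the last step uses the elementary identity $[S_n, S_n] = A_n$.

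For each odd prime $\ell \geq 3$, I would promote the mod-$\ell$ hypothesis $H(\ell) \supset \Sp_{2g}(\bz/\ell\bz)$ to the $\ell$-adic statement $H_\ell \supset \Sp_{2g}(\bz_\ell)$ for the projection $H_\ell$ of $H$, by a standard lifting lemma in the spirit of~\cite[Lemma 2.11]{landesman-swaminathan-tao-xu:rational-families}. Since $\Sp_{2g}(\bz_\ell)$ is perfect for $g \geq 2$ and $\ell \geq 3$ (the $\ell=2$ obstruction present in Lemma~\ref{lemma:gsp-perfect} does not arise here), this yields $[H_\ell, H_\ell] \supset \Sp_{2g}(\bz_\ell)$; the reverse inclusion is automatic since commutators have trivial multiplier.

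It remains to glue the per-prime computations into the claimed product. The containment $[H,H] \subset \fy{2^\infty}{2}^{-1}(A_{2g+i}) \times \prod_{\ell \geq 3} \Sp_{2g}(\bz_\ell)$ is immediate, since commutators project to commutators factor by factor. The reverse containment is the main obstacle of the proof. I expect to handle it with a Goursat-type argument: $[H,H]$ is a closed subgroup of the target product which surjects onto each factor, and the non-abelian composition factors of the 2-adic piece (namely $A_{2g+i}$) are disjoint from those of $\Sp_{2g}(\bz_\ell)$ for $\ell \geq 3$ (namely $\on{PSp}_{2g}(\mathbb{F}_\ell)$), as are the pro-$\ell$ abelian composition factors, which have pairwise distinct residue characteristics. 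Once these disjointness statements are checked carefully for small $g$ and $\ell$ — for instance the coincidence $A_6 \cong \Sp_4(\mathbb{F}_2)'$ lives on the $\ell=2$ side and thus causes no overlap — the Goursat-style gluing forces $[H,H]$ to equal the full product.
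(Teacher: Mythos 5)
Your local computations coincide with the paper's: Theorem~\ref{theorem:small-ab} applied to $H_2$ gives $[H_2,H_2]=\fy{2^\infty}{2}^{-1}(A_{2g+i})$, and perfectness of the symplectic groups at odd primes (the paper quotes O'Meara's $[\Sp_{2g}(\ZZ/\ell\ZZ),\Sp_{2g}(\ZZ/\ell\ZZ)]=\Sp_{2g}(\ZZ/\ell\ZZ)$ for $\ell\ge 3$, $g\ge 2$) handles the rest. The only divergence is the assembly step: the paper stays with the mod-$\ell$ reductions $[H,H](\ell)\supset\Sp_{2g}(\ZZ/\ell\ZZ)$ and then invokes \cite[Proposition 2.5]{landesman-swaminathan-tao-xu:rational-families}, which packages in one statement both the promotion from mod-$\ell$ to $\ell$-adic and the gluing over all primes (giving $[H,H]\supset\prod_{\ell\ge3}\Sp_{2g}(\ZZ_\ell)$, whence the product decomposition), whereas you lift prime by prime and then glue by hand with a Goursat argument. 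That is a legitimate alternative which makes the lemma self-contained, at the cost of reproving the companion paper's proposition; note also that your lifting step at $\ell=3$ is not literally Lemma 2.11 (which concerns commutators of congruence kernels) but the $\ell\ge3$, $g\ge2$ lifting result that Proposition 2.5 encapsulates, so cite it precisely.

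There is, however, a flaw in the disjointness claim underpinning your gluing: the abelian composition factors are \emph{not} supported at pairwise distinct primes. For every odd $\ell$, the group $\Sp_{2g}(\ZZ/\ell\ZZ)$ has center $\{\pm\id_{2g}\}$, so $\ZZ/2\ZZ$ occurs as a composition factor of finite quotients of $\Sp_{2g}(\ZZ_\ell)$ for all odd $\ell$, just as it does (abundantly) on the $2$-adic side; so the composition-factor criterion you propose fails as stated. The repair is to run Goursat with common finite simple \emph{quotients} rather than composition factors: since $\Sp_{2g}(\ZZ_\ell)$ is perfect for $\ell\ge3$, $g\ge2$, its only finite simple quotient is $\on{PSp}_{2g}(\FF_\ell)$, while any finite simple quotient of $\fy{2^\infty}{2}^{-1}(A_{2g+i})$ is either $A_{2g+i}$ or a quotient of the pro-$2$ congruence kernel, hence $\ZZ/2\ZZ$; these sets of simple quotients are pairwise disjoint in the range $g\ge2$, $\ell\ge3$ (no exceptional isomorphisms arise, as you note for $A_6$). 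With that criterion, induction over finite subproducts (using that a simple quotient of a finite product factors through one of the factors) plus closedness of $[H,H]$ gives surjectivity onto the full product, and your argument is then complete.
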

\begin{proof}
By Theorem~\ref{theorem:small-ab},
\begin{align*}
[H, H]_2 = [H_2, H_2] = \fy{2^\infty}{2}^{-1}(A_{2g+i}).
\end{align*}
Also, note that for $\ell \ge 3$,
\begin{align*}
[H, H](\ell) &= [H(\ell), H(\ell)] \\
&\supset [\Sp_{2g}(\bz / \ell \bz), \Sp_{2g} (\bz / \ell \bz)] \\
&= \Sp_{2g}(\bz / \ell \bz),
\end{align*}
the last equality following from \cite[3.3.6]{omeara1978symplectic}.
We now appeal to the fact that a closed subgroup of $\Sp_{2g}(\mathbb{Z}_\ell)$ mapping onto $\Sp_{2g}(\mathbb{Z}/\ell\mathbb{Z})$ must in fact be all of $\Sp_{2g}(\mathbb{Z}_\ell)$ for $g > 1$.
This fact was shown in~\cite[Theorem B]{weigel:on-the-profinite-completion-of-arithmetic-groups-of-split-type} (except for the case where $g = 3$ and $\ell = 2$),
as well as in
~\cite[Theorem 1.3]{vasiu2003surjectivity},
and then again in~\cite[Proposition 2.5]{landesman-swaminathan-tao-xu:rational-families}.
Applying this fact to $[H, H] \subset \Sp_{2g}(\zh)$ gives the result.
%\footnote{The cited result~\cite[Proposition 2.5]{landesman-swaminathan-tao-xu:rational-families} essentially states that a closed subgroup of $\Sp_{2g}(\mathbb{Z}_\ell)$ mapping onto $\Sp_{2g}(\mathbb{Z}/\ell\mathbb{Z})$ must in fact be all of $\Sp_{2g}(\mathbb{Z}_\ell)$. This result was originally proven for $\ell \geq 5$ by Serre in a 1986 letter to Vign\'{e}ras (see~\cite{serre-vigneras}).}
\end{proof}
\begin{corollary} \label{theorem:r=2:special}
For $H$ as in Lemma~\ref{theorem:r=2}, we have $[\hyphat_{2g+i} : [H, H]] = 2$. In particular, $[\hyphat_{2g+i} : [
\widetilde{\mathrm{G} \mathcal {S}}_{2g+i, K},
\widetilde{\mathrm{G} \mathcal {S}}_{2g+i, K},
]] = 2$.
\end{corollary}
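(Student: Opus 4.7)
The plan is to deduce both statements from Lemma~\ref{theorem:r=2} via the product decomposition $\Sp_{2g}(\zh) = \prod_\ell \Sp_{2g}(\bz_\ell)$. For the first statement, Lemma~\ref{theorem:r=2} gives
\[
[H, H] = \fy{2^\infty}{2}^{-1}(A_{2g+i}) \times \prod_{\ell \ge 3} \Sp_{2g}(\bz_\ell),
\]
while under the analogous product $\hyphat_{2g+i}$ factors as $\fy{2^\infty}{2}^{-1}(S_{2g+i}) \times \prod_{\ell \ge 3} \Sp_{2g}(\bz_\ell)$. The odd-prime factors coincide, so the index is concentrated at the prime $2$, where it equals $[S_{2g+i} : A_{2g+i}] = 2$.

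For the ``in particular'' claim, I take $H = \widetilde{\mathrm{G} \mathcal {S}}_{2g+i, K}$ and verify the two hypotheses of Lemma~\ref{theorem:r=2}. The second is immediate: at each $\ell \ge 3$, $\Sp_{2g}(\bz_\ell)$ sits inside $\widetilde{\mathrm{G} \mathcal {S}}_{2g+i, K}$ as the $\ell$-th factor with identity elsewhere (hence trivial multiplier $1 \in \chi(K)$), so $H(\ell) \supset \Sp_{2g}(\bz/\ell\bz)$. The first hypothesis, $H_2 = \gify{2^\infty}{2}^{-1}(S_{2g+i})$, holds whenever $\chi(K)$ projects onto $\bz_2^\times$ (e.g.\ for $K = \bq$, by Kronecker--Weber), in which case the result follows directly from the first claim.

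The main obstacle I anticipate is the case $\chi(K)_2 \subsetneq \bz_2^\times$: then $H_2$ is cut down by the multiplier constraint at $2$, and Lemma~\ref{theorem:r=2} does not apply verbatim. To cover this case I will use the sandwich $[\hyphat_{2g+i}, \hyphat_{2g+i}] \subseteq [H, H] \subseteq \hyphat_{2g+i}$---the lower containment because $\hyphat_{2g+i} \subset H$ (its elements have multiplier $1 \in \chi(K)$), the upper because commutators in $\GSp$ automatically have trivial multiplier and mod-$2$ image in $A_{2g+i} \subset S_{2g+i}$---combined with the direct identification $[\hyphat_{2g+i}, \hyphat_{2g+i}] = \fy{2^\infty}{2}^{-1}(A_{2g+i}) \times \prod_{\ell \ge 3} \Sp_{2g}(\bz_\ell)$, which one proves by rerunning the arguments of Sections~\ref{honeyseek} and~\ref{subsection:finishing-the-proof} inside $\Sp$ throughout. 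This pins the index to $2$.
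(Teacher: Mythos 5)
Your treatment of the first claim, and of the ``in particular'' claim when $\chi(K)$ surjects onto $\bz_2^\times$, is correct and is exactly the immediate deduction the paper intends from Lemma~\ref{theorem:r=2}: under $\Sp_{2g}(\zh)\simeq\prod_\ell\Sp_{2g}(\bz_\ell)$ the odd components of $[H,H]$ and of $\hyphat_{2g+i}$ agree, so the index is computed at $2$ and equals $[S_{2g+i}:A_{2g+i}]=2$; and for $H=\grouphypboth{g}{2-i}{K}$ the second hypothesis of Lemma~\ref{theorem:r=2} holds because each $\Sp_{2g}(\bz_\ell)$ sits inside with multiplier $1$ and trivial mod-$2$ image. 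You are also right to flag that for fields such as $K=\QQ(i)$ the multiplier condition cuts down the $2$-adic projection, so the hypothesis $H_2=\gify{2^\infty}{2}^{-1}(S_{2g+i})$ fails verbatim---a subtlety the paper passes over (it only ever invokes the corollary with $K=\QQ$, where your verification suffices, e.g.\ to get $\delta_\QQ=2$ in Theorem~\ref{mainbldg}).

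The gap is in your patch for the remaining case. The identification $[\hyphat_{2g+i},\hyphat_{2g+i}]=\fy{2^\infty}{2}^{-1}(A_{2g+i})\times\prod_{\ell\ge3}\Sp_{2g}(\bz_\ell)$ cannot be obtained by ``rerunning the arguments of Sections~\ref{honeyseek} and~\ref{subsection:finishing-the-proof} inside $\Sp$,'' because those arguments make essential use of elements of $\mf{gsp}_{2g}(\bz/2\bz)$ with nonzero parameter $d$, i.e.\ of lifts whose multiplier is $3 \bmod 4$, which are unavailable in $\hypaddict_{2g+i}$ (and in $H_2$ when $\chi(K)$ projects into $1+4\bz_2$). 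Concretely: the proof of Lemma~\ref{lemma:gsp-perfect} uses the non-symplectic element $\bigl[\begin{smallmatrix}\id_g&0\\0&0\end{smallmatrix}\bigr]$, and the $\Sp$-analogue of that lemma is actually false---the linear functional sending $\bigl[\begin{smallmatrix}A&B\\C&A^T\end{smallmatrix}\bigr]\mapsto\on{tr}(B)+\on{tr}(C)$ vanishes on all brackets of $\mf{sp}_{2g}(\FF_2)$ but not on $\mf{sp}_{2g}(\FF_2)$ itself (already $[\mf{sl}_2(\FF_2),\mf{sl}_2(\FF_2)]$ is one-dimensional)---so the $\Sp$-only analogue of Corollary~\ref{corollary:coolwhip} does not follow this way; likewise the key inputs~\eqref{jc1} and~\eqref{jc2} in Proposition~\ref{lemma:contains-mod-2} take $\Lambda=x_{11}\otimes\id_g\notin\mf{sp}_{2g}(\bz/2\bz)$. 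This is precisely why Theorem~\ref{theorem:small-ab} is formulated for full $\GSp$-preimages (its abelianization statement even records the extra $(\bz_2)^\times$ factor those similitudes provide). The group-level statement about $[\hyphat_{2g+i},\hyphat_{2g+i}]$ may well be true, but it needs a genuinely different argument (commutators with lifts of $S_{2g+i}$ alone, or a multiplier-restricted variant of Theorem~\ref{theorem:small-ab}), not a rerun; as written, your general-$K$ branch is unproven.
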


\subsection{Embedding the Symmetric Group, Take 2} \label{symbed}

In Section~\ref{take1} we constructed the well-known embedding $S_{2g+2} \hookrightarrow \Sp_{2g}(\bz / 2 \bz)$. Beginning with
\begin{align*}
	V &\simeq \FF_2^{2g+2} \\
	t &= (1, \dots, 1) \in V \\
	W &= t^{\perp}/\on{span}(t),
\end{align*}
we observed that the action of $S_{2g+2}$ on the basis vectors of $V$ descends to a symplectic action on $W$. Our goal in this section is to relate this embedding with the mod-2 Galois representation attached to a family of hyperelliptic curves, by proving the following result:
\begin{theorem} \label{monodromy-in-sym}
	Given a family $\mc{C} \to \mc{U}$ of hyperelliptic curves (where $\mc{U}$ is any stack), and a geometric generic point $\ol{\eta} \hookrightarrow \mc{U}$, the monodromy group $\rho_{2}(\pi_1(\mc{U}, \ol{\eta})) \subset \Sp_{2g}(\bz / 2 \bz)$ is in fact contained in $S_{2g+2} \subset \Sp_{2g}(\bz / 2 \bz)$. As a subgroup of $S_{2g+2}$, the monodromy group is given by the action of $\pi_1(\mc{U})$ on the Weierstrass points of $\mc{C}_{\ol{\eta}}$.
\end{theorem}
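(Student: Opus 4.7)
The plan is to construct a canonical, $\pi_1(\mathcal{U})$-equivariant isomorphism identifying $J[2]$ (as a symplectic local system on $\mathcal{U}$) with the local system $t^\perp/\langle t\rangle$ built from the Weierstrass locus as in Lemma~\ref{lemma:include-s}. Once this is in place, both assertions of the theorem follow at once: $\rho_2$ factors through $S_{2g+2} \hookrightarrow \Sp_{2g}(\mathbb{F}_2)$, and the image is precisely the permutation action on Weierstrass points.

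First I would form the Weierstrass cover. Since $\mathcal{C} \to \mathcal{U}$ is a smooth family of hyperelliptic curves of genus $g \geq 2$, the hyperelliptic involution is canonical and exhibits $\mathcal{C}$ as a double cover of a $\mathbb{P}^1$-bundle over $\mathcal{U}$; its branch locus is an étale cover $\mathcal{W} \to \mathcal{U}$ of degree $2g+2$ (smoothness of fibers guarantees the branch points remain distinct). Passing to a geometric generic point $\ol{\eta}$ gives a monodromy representation $\rho_{\mathcal{W}} \colon \pi_1(\mathcal{U}, \ol{\eta}) \to \Aut(\mathcal{W}_{\ol{\eta}}) \cong S_{2g+2}$.

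Next I would establish, fiberwise over the geometric generic point, the classical identification $J_C[2] \cong t^\perp/\langle t\rangle$, where $C \defeq \mathcal{C}_{\ol\eta}$. Label the Weierstrass points of $C$ as $P_1, \ldots, P_{2g+2}$, and fix a degree-$2$ divisor $D_0$ pulled back via the hyperelliptic map $C \to \mathbb{P}^1$. Two classical facts drive the construction: $(i)$ $2 P_i \sim D_0$ for every $i$, since $P_i$ is a ramification point of a degree-$2$ cover; and $(ii)$ $\sum_{i=1}^{2g+2} P_i \sim (g+1) D_0$, since a generator $y$ of the quadratic extension $k(C)/k(\mathbb{P}^1)$ has divisor $\sum_i P_i - (g+1) \cdot (\text{pole divisor})$. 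These imply that the homomorphism $t^\perp \to J_C[2]$ sending the characteristic vector of an even subset $S$ to the class $\bigl[\sum_{i \in S} P_i - \tfrac{|S|}{2} D_0\bigr]$ is well-defined, $2$-torsion-valued by $(i)$, and has kernel exactly $\langle t\rangle$ by $(ii)$; a dimension count then yields an isomorphism $\phi \colon t^\perp/\langle t\rangle \xrightarrow{\sim} J_C[2]$. A direct cycle-level computation shows $\phi$ is an isometry for the symplectic forms, where $t^\perp/\langle t\rangle$ carries the form of Lemma~\ref{lemma:include-s} and $J_C[2]$ carries the Weil pairing.

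Finally I would globalize. Because $\phi$ depends only on intrinsic data of $C$ together with its unordered set of Weierstrass points, it assembles into an isomorphism of $\pi_1(\mathcal{U})$-representations that intertwines $\rho_2$ with $\iota \circ \rho_{\mathcal{W}}$, where $\iota \colon S_{2g+2} \hookrightarrow \Sp_{2g}(\mathbb{F}_2)$ is exactly the embedding of Lemma~\ref{lemma:include-s}. This yields the theorem. The main obstacle I anticipate is the Weil-pairing compatibility of $\phi$, which I would handle by exhibiting explicit $1$-cycles representing the classes $[P_i - P_{2g+2}]$ and verifying that their intersection numbers reproduce the standard bilinear form on $V$; some additional but routine care is then required in the stacky setting to ensure that $\mathcal{W} \to \mathcal{U}$ and $J[2]$ descend appropriately and that the fiberwise construction of $\phi$ remains canonical.
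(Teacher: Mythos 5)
Your proposal follows essentially the same route as the paper: identify $J[2]$ fiberwise with $t^\perp/\langle t\rangle$ via divisor classes supported on the Weierstrass points, then propagate this canonical identification over the family so that $\rho_2$ is intertwined with the permutation action of $\pi_1(\mc{U})$ on the degree-$(2g+2)$ \'etale cover of Weierstrass points. The paper simply carries out the globalization you label ``routine care'' in detail---constructing the Weierstrass locus, the group space it spans, and the map to $\on{Pic}_{\mc{C}/\mc{U}}$ by \'etale descent over schemes and then over the universal family on the moduli stack---so your argument matches the paper's.
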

This has an immediate consequence for our standard families:
\begin{corollary} \label{sym-W}
	For $i \in \{1, 2, 3, 4\}$, we have $\mono_{\standardFamily g i K} \subset \grouphypboth g {(i \bmod 2)} K$.
\end{corollary}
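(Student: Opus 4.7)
The goal is to verify the two conditions defining $\grouphypboth g {(i \bmod 2)} K$ for any element of $\mono_{\standardFamily g i K}$: (a) the mod-2 reduction lands in the embedded copy of $S_{2g+2-(i \bmod 2)}$, and (b) its multiplier lands in $\chi(G_K) \subset \wh{\ZZ}^\times$. I would treat these conditions separately.

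For condition (a), I would apply Theorem~\ref{monodromy-in-sym} directly to the family $\standardFamily g i K \to \standardTarget g i K$, which yields that the mod-2 monodromy lies in $S_{2g+2}$, acting as permutations of the $2g+2$ Weierstrass points of a geometric generic fiber. For $i \in \{2,4\}$ (the case $i \bmod 2 = 0$), this is already the desired containment. For $i \in \{1,3\}$ (the case $i \bmod 2 = 1$), the defining polynomial has odd degree $2g+1$, so the fiber over any point of $\standardTarget g i K$ has a Weierstrass point at infinity that is $\standardTarget g i K$-rational. Being defined over the base, this point is necessarily fixed by the monodromy action, so the monodromy lies in its stabilizer inside $S_{2g+2}$. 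Matching this fixed Weierstrass point with the basis vector $(0,\dots,0,1) \in \FF_2^{2g+2}$, one recovers exactly the embedding $S_{2g+1} \hookrightarrow S_{2g+2}$ specified in the paragraph following Lemma~\ref{lemma:include-s}, giving condition (a).

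For condition (b), I would invoke Remark~\ref{remark:det-rho-is-chi}: since $\standardTarget g i K$ is an open subscheme of affine space over $K$ obtained by removing a discriminant hypersurface, it is normal and integral, and moreover geometrically integral (hence geometrically connected) over $K$. Thus the remark gives $\mult \circ \rho_{\standardFamily g i K} = \chi \circ \phi$, where $\phi \colon \pi_1(\standardTarget g i K) \to \pi_1(\spec K) \simeq G_K$ is the map induced by the structure morphism. Geometric connectedness of $\standardTarget g i K$ ensures that $\phi$ is surjective, and therefore $\mult(\mono_{\standardFamily g i K}) = \chi(G_K)$, verifying (b). Combining (a) and (b) gives the corollary. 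I do not expect a serious obstacle; the only subtle point is to align the stabilizer-of-infinity embedding of $S_{2g+1}$ with the embedding fixed in the paper, which is straightforward once the Weierstrass points are indexed so that infinity corresponds to the last coordinate of $\FF_2^{2g+2}$.
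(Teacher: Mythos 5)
Your proposal is correct and follows essentially the same route as the paper: both apply Theorem~\ref{monodromy-in-sym} for the mod-$2$ containment, handle $i \in \{1,3\}$ via the rational Weierstrass point over infinity being fixed by monodromy, and use Remark~\ref{remark:det-rho-is-chi} for the multiplier condition. Your extra care in matching the stabilizer of the point at infinity with the fixed embedding of $S_{2g+1}$ and in noting surjectivity of $\pi_1(\standardTarget g i K) \to G_K$ only makes explicit what the paper leaves implicit.
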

\begin{proof}
	We have $\on{mult}(\mono_{\standardFamily g i K}) = \chi(K)$ as subgroups of $\zh^\times$, by Remark~\ref{remark:det-rho-is-chi}. Therefore, it suffices to show that $\mono_{\standardFamily g i K}(2) \subset S_{2g+2-(i \bmod 2)}$. By Theorem~\ref{monodromy-in-sym}, we need only check that the monodromy action on the Weierstrass points of $\standardFamily g i K \to \standardTarget g i K$ is contained in $S_{2g+2-(i \bmod 2))}$. The nontrivial cases $i = 1, 3$ follow by observing that, when the defining equation is $y^2 = f(x)$ with $\deg f(x) = 2g+1$, one Weierstrass point always lies over infinity, hence is fixed under monodromy.
\end{proof}

We prove Theorem~\ref{monodromy-in-sym} in three steps:
\begin{enumerate}[(1)]
	\item In subsection~\ref{sec:single}, we prove the statement when $\mc{U}$ is $\spec \ol{k}$. The key points are that the constructions are functorial in $\mc{C}$ and that the isomorphism with $\on{Jac}_{C/U}[2]$ follows from standard facts about divisors on hyperelliptic curves.
	\item In subsection~\ref{part2}, we prove the statement when $\mc{U}$ is a scheme by explicitly constructing the algebraic space of Weierstrass points over $\mc{U}$, the corresponding group space $t^\perp / \on{span}(t)$ over $\mc{U}$, and the map from the latter to $\on{Jac}_{C/U}[2]$. Step (1) implies that this map is an isomorphism.
	\item In subsection~\ref{part3}, we interpret the (functorial) constructions of step (2) as giving rise to corresponding objects and maps over the moduli space $\hyperell_g$ of hyperelliptic curves, corresponding to the universal family $\hyperellSource_g \to \hyperell_g$.
\end{enumerate}

\subsubsection{A single hyperelliptic curve} \label{sec:single}
Let $k$ be an algebraically closed field of characteristic zero, let $C$ be a hyperelliptic curve over $k$, and let $J$ be the Jacobian of $C$. The
set of Weierstrass points $\{P_1, \ldots, P_{2g+2}\}$ of $C$ is uniquely determined because $g \ge 2$. With this setup, define $V$
to be the free vector space over $\FF_2$ spanned by $P_1, \ldots, P_{2g+2}$, so that
\begin{align*}
	t &= P_1 + \cdots + P_{2g+2} \\
	t^\perp &= \on{span}_{\mathbb{F}_2}(P_i - P_j : i, j \in \{1, \ldots, 2g+2\}).
\end{align*}
The map
\[
\begin{tikzcd}[row sep = 0.15cm]
\on{span}_{\bz}(P_1, \ldots, P_{2g+2}) \ar{r}{\phi} & \on{Pic}_C \\
\sum_i a_i \cdot P_i  \ar[mapsto]{r} & \mc{O}_C\left( \sum_i a_i \cdot P_i \right),
\end{tikzcd}
\]
is such that $\phi(\on{span}_{\bz}(P_i - P_j)) \subset \on{Pic}_C^0 \simeq J$. Furthermore, it can be checked that
\begin{itemize}
	\item The resulting map $\on{span}_{\bz}(P_i - P_j) \to J$ annihilates $2\cdot (P_i - P_j)$ and $t$. Hence it descends to a map $W \defeq t^\perp / \on{span}(t) \to J[2]$ of $\FF_2$ vector spaces.
	\item This latter map is an isomorphism.
\end{itemize}
The second bullet point implies that the action of $\on{Aut}(C)$ on $J[2]$, \emph{a priori} contained in $\Sp_{2g}(\bz / 2 \bz)$, is in fact contained in the subgroup $S_{2g+2} \subset \Sp_{2g}(\bz / 2 \bz)$ which is determined by the vector space isomorphism $J[2] \simeq W$. For details, see \cite[Proposition 1.2.1(a)]{yelton2015thesis}.

\subsubsection{Schematic families of hyperelliptic curves} \label{part2}
Let ${C} \to {U}$ be a family of hyperelliptic curves of genus $g$, where $U$ is a scheme. Because all constructions in \ref{sec:single} were functorial, they can be carried out in families. Let us indicate how this is done.
\begin{enumerate}[itemsep=0.25cm]
	\item Let ${P}$ be the fixed point locus of the hyperelliptic involution. Then we have a diagram
	\[
	\begin{tikzcd}[column sep = 2 cm]
	{P} \ar[hookrightarrow]{r}{\text{closed emb.}} \ar[swap]{rd}{\text{\'etale}} & {C} \ar{d} \\
	& {U}
	\end{tikzcd}
	\]
	For any geometric point $u \hookrightarrow {U}$, the fiber ${P}_u$ consists of the Weierstrass points of ${C}_u$.
	\item Let ${G}$ be the group algebraic space over ${U}$ which represents the sheaf associated to the following presheaf on $\on{Sch}_{/{U}}$, in the \'etale topology:
	\[
	T \mapsto \on{span}_\bz (\on{Hom}_{{U}}(T, {P})).
	\]
	Representability follows by taking an \'etale cover of ${U}$ which trivializes ${P}$. For $u \ra {U}$ a geometric point, the fiber ${G}_u$ equals $\on{span}_\bz({P}_u)$.
	
	There is a section $t \colon {U} \to {G}$ which is first defined on a sufficiently fine \'etale cover ${U}' \to {U}$ for which $U' \times_U P$ is a trivial $(2g+2)$-cover of $U'$, by ``adding all the Weierstrass points,'' i.e.\ summing the $(2g+2)$ basis elements of
	\[
		\on{span}_\bz(\Hom_U(U', P)) \simeq \on{span}_\bz(\Hom_{U'}(U', U' \times_U P)).
	\]
	The section on $U'$ can then be descended to $U$.
	
	We can also define a group subspace ${G}^0 \hookrightarrow {G}$ via the sub-presheaf given by requiring that the coefficients of the $\bz$-linear combination sum to zero.
	\item Define a map $\Phi: {G} \to \on{Pic}_{{C}/{U}}$ of group spaces over ${U}$ as follows: given $f \in {G}(T)$, we may find an \'etale cover $\sigma: T' \to T$ for which $\sigma^* f = \sum_i f_i$, for some $f_i \in \Hom_{{U}}(T', {P})$. Each $f_i$ gives a section of the pulled-back family ${C}_{T'} \to T'$, whose image determines a relative effective Cartier divisor $D_i$. A standard descent argument shows that $\sum_i D_i$ descends to a divisor $D$ on ${C}_T$, which does not depend on the chosen \'etale cover $\sigma$. We may therefore define $\Phi(f) \defeq D$. This assignment is natural in $T$, so it gives a natural transformation of functors ${G} \to \on{Pic}_{{C}/{U}}$. The fiber $\Phi_u$ is the map $\phi$ defined in \ref{sec:single}. The map $\Phi$ restricts to a map ${G}^0 \to \on{Pic}_{{C}/{U}}^0 \simeq \on{Jac}_{{C}/{U}}$.
	\item Subsection \ref{sec:single}
    %shows that $\Phi$ is surjective and
    allows us to describe the kernel and image of $\Phi$ as follows. First, $2 \cdot G^0$ maps to zero, so $\Phi$ descends to a map $G^0 / (2 \cdot G^0) \to \on{Jac}_{C/U}$. Second, the inclusion $G^0 \hookrightarrow G$ gives an injection $G^0 / (2 \cdot G^0) \hookrightarrow G / (2 \cdot G)$, and the image of $t \in G(U)$ in the quotient $G / (2\cdot G)$ in fact lies in $G^0 / (2 \cdot G^0)$ because $t$ is a sum of an even number of terms; we abuse notation by denoting the latter section with the same symbol $t$. This $t$ spans the kernel of the descended map $G^0 / (2 \cdot G^0) \to \on{Jac}_{C/U}$, the image of which is equal to $\on{Jac}_{C/U}[2]$. Thus, we have that
	\[
		G^0 / (2 \cdot G^0 + \on{span}(t)) \to \on{Jac}_{C/U}[2]
	\]
	is an isomorphism of group stacks over $U$.
\end{enumerate}
This proves Theorem~\ref{monodromy-in-sym} when $\mc{U} \defeq U$ is a scheme, because the action of $\pi_1(U)$ on the fiber of $G^0 / (2G^0 + \on{span}(t))$ over a chosen geometric generic point $\ol{\eta} \in U$ (as an $\mathbb{F}_2$-vector space) is obtained from the action of $\pi_1(U)$ on the fiber of $P_{\ol{\eta}}$ (as a set of size $(2g+2)$) via the procedure of Section~\ref{take1}.

\subsubsection{The universal family over $\hyperell_g$} \label{part3}

The constructions of subsection~\ref{part2} are functorial in the chosen family $C \to U$, and behave well under base change $V \to U$, so we obtain the analogous constructions over the moduli stack of hyperelliptic curves:
\[
\begin{tikzcd}[column sep = 0.7in]
\mc{P} \ar[hookrightarrow]{r}{\text{closed emb.}} \ar[swap]{rd}{\text{\'etale}} & \hyperellSource_g \ar{d} \\
& \hyperell_g
\end{tikzcd}
\quad \text{and} \quad
\begin{tikzcd}[column sep = 0.7in]
\mc{G}^0 \ar{r}{\mc{O}(-)} \ar[hookrightarrow]{d} & \on{Pic}^0_{\hyperellSource_g/\hyperell_g} \ar[hookrightarrow]{d} \\
\mc{G} \ar{r}{\mc{O}(-)} \ar{dr} & \on{Pic}_{\hyperellSource_g/\hyperell_g} \ar{d} \\
& \hyperell_g \ar[dashed, bend left = 20]{ul}{t}
\end{tikzcd}
\]
where $t$ is a section of $\mc{G} \to \hyperell_g$, for which we have the following isomorphism of group stacks over $\hyperell_g$:
\[
\begin{tikzcd}
\mc{G}^0/(2 \cdot \mc{G}^0 + \on{span}(t)) \ar{r}{\simeq} \ar{dr} & \on{Pic}^0_{\hyperellSource_g/\hyperell_g}[2] \ar{d} \\
& \hyperell_g
\end{tikzcd}
\]
Here, $t$ is interpreted as a section of $\mc{G}^0 / (2 \cdot \mc{G}^0)$ over $\hyperell_g$ just as in step (4) of subsection~\ref{part2}.

\begin{remark} By way of example, let us explain the definition of $\mc{P}$, and prove that it is a stack. By definition, $\mc{P}(U)$ is the groupoid whose objects are pairs $(C \to U, f)$ where $C \to U$ is a hyperelliptic family over $U$ (i.e.\ an object of $\hyperell_g(U)$) and $f$ is a section of the \'etale cover $P \to U$ constructed in subsection~\ref{part2} from the family $C \to U$. A morphism $(C_1 \to U, f_1) \simeq (C_2 \to U, f_2)$ is an isomorphism $\sigma$ of families
\[
\begin{tikzcd}
C_1 \ar{rr}{\simeq} \ar{dr} && C_2 \ar{dl} \\
& U
\end{tikzcd}
\]
for which the resulting isomorphism $P_1 \simeq P_2$ of the associated spaces of Weierstrass points identifies the sections $f_1$ and $f_2$.

The descent condition is easy to check. Given an \'etale cover $V \to U$, a descent datum for $\mc{P}$ is given by a family $\wt{C} \to V$ and a section $\wt{f}$ of the resulting space of Weierstrass points, denoted $\wt{P} \to V$, along with gluing isomorphisms that take place over $V \times_U V$, which satisfy a cocycle condition on $V \times_U V \times_U V$. The cocycle condition first allows us to realize $\wt{C}$ as the pullback of a family $C \to U$, because $\hyperell_g$ is known to be a stack. By functoriality, the pullback to $V$ of the resulting space of Weierstrass points $P \to U$ is canonically identified with $\wt{P} \to V$. So effectiveness of the descent datum follows from the fact that $P \to U$ is an \'etale sheaf over $\on{Sch}_{/U}$, and, as such, satisfies a gluing axiom.
\end{remark}
In a similar way, the following points are formal consequences of subsection~\ref{part2}:
\begin{itemize}
	\item All stacks appearing in the three commutative diagrams above are algebraic.
	\item All maps to $\hyperell_g$ appearing above are representable.
	\item The isomorphism in the third diagram, which gives the desired statement about monodromy for the universal family $\hyperellSource_g \to \hyperell_g$, can be checked on pullback to schemes $U$, but this is exactly the conclusion of subsection~\ref{part2}.
\end{itemize}

To finish the proof of Theorem~\ref{monodromy-in-sym}, we need only note that any hyperelliptic family $\mc{C} \to \mc{U},$ with $\mc U$ a Deligne-Mumford stack, is pulled back from the universal family $\hyperellSource_g \to \hyperell_g$ via a map $\mc{U} \to \hyperell_g$. In this case, all constructions above can be pulled back along the same map $\mc{U} \to \hyperell_g$. Therefore, to $\mc{C} \to \mc{U}$, we can associate a stack of Weierstrass points, whose $\bz$-span maps to $\on{Pic}_{\mc{C} / \mc{U}}$, giving rise to an isomorphism analogous to that of the third commutative diagram above. This gives the desired result, by the same reasoning as in the last paragraph of subsection~\ref{part2}.

\subsection{Monodromy of Hyperelliptic Families $\standardTarget g i K$ and $\hyperelliptic g K$}
\label{subsection:monodromy-of-families}

We now show that the containment in Corollary~\ref{sym-W} is an equality for the families $\standardFamily g i K \to \standardTarget g i K$.
\begin{lemma} \label{lemma:monodromy-std}
Let $g \geq 2$, and let $i \in \{1, 2, 3, 4\}$.
\begin{enumerate}[label=(\roman*)]
	\item For any algebraically closed field $L$ which is a subfield of $\mathbb C$, we have $\mono_{\standardFamily g i L}= \hyphat_{2g+2-(i \bmod 2)}$.
	\item For any number field $K$, we have $\mono_{\standardFamily g i K}= \grouphypboth g {(i \bmod 2)} {K}$.
\end{enumerate}
\end{lemma}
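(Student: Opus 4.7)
The plan is to prove part (i) prime by prime, and then deduce part (ii) by a short exact sequence argument. The easy inclusion $\mono_{\standardFamily g i L} \subseteq \hyphat_{2g+2-(i\bmod 2)}$ in (i) follows from Corollary~\ref{sym-W} after noting that $\chi(L)$ is trivial for $L$ algebraically closed (so the multiplier is forced to equal $1$), and the easy inclusion in (ii) is simply Corollary~\ref{sym-W}. Thus all the work lies in the reverse inclusions.

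To reduce (ii) to (i), I would combine Remark~\ref{remark:det-rho-is-chi} -- which says the multiplier on $\mono_{\standardFamily g i K}$ has image exactly $\chi(K)$ -- with the short exact sequence
\[
1 \to \mono_{\standardFamily g i K} \cap \Sp_{2g}(\wh\ZZ) \to \mono_{\standardFamily g i K} \to \chi(K) \to 1.
\]
The kernel here contains $\mono_{\standardFamily g i {\ol K}} = \hyphat_{2g+2-(i\bmod 2)}$ by part (i), and is contained in $\grouphypboth g {(i\bmod 2)} K \cap \Sp_{2g}(\wh\ZZ) = \hyphat_{2g+2-(i\bmod 2)}$ by Corollary~\ref{sym-W}, so it must equal $\hyphat_{2g+2-(i\bmod 2)}$. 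Comparing this extension against the analogous one defining $\grouphypboth g {(i\bmod 2)} K$ then yields (ii).

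For part (i) I would reduce to $L = \ol{\mathbb Q}$ (the topological fundamental group of $\standardTarget g i L(\mathbb C)$ does not depend on the algebraically closed subfield $L \subseteq \mathbb C$), and split the proof into four steps. First, by Theorem~\ref{monodromy-in-sym} the mod-$2$ monodromy is identified with the Galois action on the Weierstrass points, that is, the Galois group of the generic polynomial parametrized by the family; this is the full $S_{2g+2-(i\bmod 2)}$ by a standard specialization/Hilbert irreducibility argument, which goes through for families $3$ and $4$ even with the sub-leading coefficient suppressed (the Tschirnhaus normal forms still admit specializations with full Galois group). Second, for each prime $\ell \geq 3$, the mod-$\ell$ monodromy is all of $\Sp_{2g}(\FF_\ell)$ by the classical theorem (due to A'Campo, with related results by Mumford and others) that the symplectic representation of the hyperelliptic mapping class group surjects onto $\Sp_{2g}(\ZZ)$, applied to the fact that each standard family dominates the moduli stack of hyperelliptic curves.

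The third step is $\ell$-adic lifting, and the fourth is Goursat-style assembly via \cite[Proposition~2.5]{landesman-swaminathan-tao-xu:rational-families}. For $\ell \geq 3$, mod-$\ell$ surjectivity onto $\Sp_{2g}(\FF_\ell)$ automatically upgrades to $\ell$-adic surjectivity onto $\Sp_{2g}(\ZZ_\ell)$ via the Lie-algebra lifting already used in Lemma~\ref{theorem:r=2}. At $\ell = 2$, the Lie algebra $\mf{sp}_{2g}(\FF_2)$ is not simple, so the analogous Lie-algebra argument fails; this is the principal obstacle. To overcome it, I would combine step one with Corollary~\ref{lemma:contains-mod-4} and Proposition~\ref{lemma:contains-mod-2} -- the two technical inputs to Theorem~\ref{theorem:small-ab} -- to show that any closed subgroup of $\GSp_{2g}(\ZZ_2)$ whose mod-$2$ reduction contains $S_{2g+1}$ fills out the full preimage of its mod-$2$ image, which here is exactly $\hyphat_{2g+2-(i\bmod 2)}$. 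This $2$-adic lifting is precisely the technical heart of the lemma and is exactly what Section~\ref{section:group-theory} was constructed to supply.
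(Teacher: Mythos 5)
Your reduction of (ii) to (i) and the easy inclusions via Corollary~\ref{sym-W} match the paper, but your proof of (i) has a genuine gap at the $2$-adic step, which is exactly the hard part. You claim that Corollary~\ref{lemma:contains-mod-4} and Proposition~\ref{lemma:contains-mod-2} show that a closed subgroup of $\GSp_{2g}(\ZZ_2)$ whose mod-$2$ image contains $S_{2g+1}$ must be the full preimage $\gify{2^\infty}{2}^{-1}$ of its mod-$2$ image. They show nothing of the sort: both results (and Theorem~\ref{theorem:small-ab}) take as a \emph{hypothesis} that $H=\gify{2^\infty}{2}^{-1}(H(2))$ and then compute the closed commutator $[H,H]$; they provide no lifting statement from mod-$2$ data to $\ZZ_2$. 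The statement you need is precisely Conjecture~\ref{conjecture:surjective-mod-2-implies-surjective-2-adically}, which the paper leaves open for $g\ge 4$ and establishes only for $g\in\{2,3\}$ (Lemma~\ref{lemma:2-adic}, partly by computer). Since Lemma~\ref{lemma:monodromy-std} is asserted for all $g\ge 2$, your prime-by-prime route cannot close at $\ell=2$ with the tools in this paper. The paper avoids this entirely: for $i\in\{1,2\}$ it invokes A'Campo's theorem, which computes the \emph{integral} topological monodromy of these families to be the preimage of $S_{2g+2-(i\bmod 2)}$ in $\Sp_{2g}(\ZZ)$, and then passes to the \'etale monodromy as the closure under profinite completion, giving $\hyphat_{2g+2-(i\bmod 2)}$ at all primes simultaneously (no separate $2$-adic lifting needed); for $i\in\{3,4\}$ it constructs an explicit deformation retraction of $\standardTarget g {i-2} {\mathbb C}$ onto $\standardTarget g i {\mathbb C}$ to show $\mono_{\standardFamily g i {\mathbb C}}=\mono_{\standardFamily g {i-2} {\mathbb C}}$.

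Two further problems in your step for odd $\ell$: the theorem you cite is false as stated -- for $g\ge 3$ the hyperelliptic mapping class group does \emph{not} surject onto $\Sp_{2g}(\ZZ)$; A'Campo's result is that its image is exactly the mod-$2$ congruence subgroup $\hyphat_{2g+2}\cap\Sp_{2g}(\ZZ)$, which is what forces the $S_{2g+2}$ constraint in the first place (surjectivity onto $\Sp_{2g}(\FF_\ell)$ for odd $\ell$ is still a consequence, but of the correct statement). Moreover, ``each standard family dominates the moduli stack'' transfers monodromy in the wrong direction: the monodromy of a family pulled back along $U\to\hyperelliptic g K$ is \emph{contained in} that of the universal family, and to get equality one needs surjectivity of $\pi_1(U)\to\pi_1(\hyperelliptic g K)$, which requires geometrically connected generic fibers and in fact fails for the odd-degree families $i\in\{1,3\}$ (their mod-$2$ image is only $S_{2g+1}$). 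So families $3$ and $4$ need their own argument in any case; the paper's deformation retract, or a direct comparison of the depressed and general coefficient spaces, would repair this part, but the $2$-adic gap remains the fatal one.
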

\begin{proof}
(i) $\Leftrightarrow$ (ii): we have a map of short exact sequences
\begin{equation}
	\label{equation:}
	\begin{tikzcd}
		0 \ar {r}  &  \mono_{\standardFamily g i {\overline K}} \ar {r}\ar{d} & \mono_{\standardFamily g i K} \ar {r}{\mult}\ar{d} & \chi(K) \ar {r}\ar[equal]{d} & 0 \\
			0 \ar {r} &  \hyphat_{2g+2-(i \bmod 2)} \ar {r} & \grouphypboth g {(i \bmod 2)} K \ar[swap]{r}{\mult} & \chi(K) \ar {r} & 0.
	\end{tikzcd}\end{equation}
By the Five Lemma, the second vertical map is an isomorphism if and only if the first is.

Proof of (i): By Corollary~\ref{sym-W}, and because $L$ is a subfield of $\bc$, we have containments
\[
H_{\standardFamily g i \bc} \subset H_{\standardFamily g i L} \subset \hyphat_{2g+2-(i \bmod 2)}.
\]
Therefore, it suffices to show that $H_{\standardFamily g i \bc} = \hyphat_{2g+2-(i \bmod 2)}$. For $i \in \{1, 2\}$, this follows from~\cite[Th\'eor\`eme 1]{acampo:tresses-monodromie-et-le-groupe-symplectique}, since the \'etale fundamental group is the profinite completion of the topological fundamental group. To complete the proof we need only show that, when $i \in \{3, 4\}$, we have $\mono_{\standardFamily g i {\mathbb C}} = \mono_{\standardFamily g {i-2} {\mathbb C}}$.

For this, it suffices to construct a deformation retract
\[
	\phi: \standardTarget g {i-2} {\mathbb C} \times [0, 1] \to \standardTarget g {i-2} {\mathbb C}
\]
of $\standardTarget g {i-2} {\mathbb C}$ onto $\standardTarget g i {\mathbb C}$, which is done as follows. Let $n \defeq 2g+2-(i \bmod 2)$. Then $\standardTarget g {i-2} {\mathbb C}$ parameterizes unordered $n$-tuples of distinct points in $\ba^1_\bc$, and $\standardTarget g i {\mathbb C}$ parameterizes those which sum to zero. At time $t \in [0, 1]$, we define
\[
\phi_t \colon \{z_i\}_{i=1}^n \mapsto \left\{z_i - t \cdot \frac{z_1 + \cdots + z_n}{n}\right\}_{i=1}^n,
\]
where the $n$-tuple on the right sums to zero by construction. This $\phi$ is continuous, as desired. In fact, $\phi$ is regular: its coordinate functions are obtained by expressing the elementary symmetric polynomials in the right hand side $n$-tuple as polynomials in (the elementary symmetric polynomials of the $z_i$) and $t$.
\end{proof}

\begin{corollary}
	\label{monodromy-stack}
	Let $g \geq 2$. We have that $\mono_{\hyperellipticSource g K} = \grouphyp g {K}$,
	where $\hyperellipticSource g K \ra \hyperelliptic g K$ is the universal family over the moduli stack of hyperelliptic curves.
\end{corollary}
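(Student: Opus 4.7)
My plan is to sandwich $\mono_{\hyperellipticSource g K}$ between two groups that both turn out to equal $\grouphyp g K$. The upper bound comes from the general constraints provided by Theorem~\ref{monodromy-in-sym} and Remark~\ref{remark:det-rho-is-chi}; the lower bound comes from comparing with the standard family $\standardFamily g 2 K$, whose monodromy has already been computed in Lemma~\ref{lemma:monodromy-std}(ii).

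For the upper bound $\mono_{\hyperellipticSource g K} \subset \grouphyp g K$, I would apply Theorem~\ref{monodromy-in-sym} directly to the universal family, which is permitted because that theorem is stated for families over an arbitrary base stack. This shows that the mod-$2$ reduction of $\mono_{\hyperellipticSource g K}$ is contained in the subgroup $S_{2g+2} \subset \Sp_{2g}(\ZZ/2\ZZ)$. Next, the version of Remark~\ref{remark:det-rho-is-chi} for families over the normal, integral Deligne--Mumford stack $\hyperelliptic g K$ (whose function field is a regular extension of $K$) forces the multiplier of the monodromy to equal $\chi(K)$. Intersecting these two constraints gives exactly the definition of $\grouphyp g K$.

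For the lower bound $\grouphyp g K \subset \mono_{\hyperellipticSource g K}$, I would consider the classifying morphism $f \colon \standardTarget g 2 K \to \hyperelliptic g K$ of the standard family $\standardFamily g 2 K \to \standardTarget g 2 K$. Since every genus-$g$ hyperelliptic curve admits an affine Weierstrass model of the form $y^2 = p(x)$ with $\deg p = 2g+2$, the map $f$ is dominant; moreover, its generic fiber is the orbit of a connected change-of-variables group (a form of $\PGL_2 \times \mathbb{G}_m$ acting on such Weierstrass models) and is therefore geometrically connected. Hence $f$ induces a surjection on \'etale fundamental groups, and by functoriality of the monodromy representation this yields $\mono_{\standardFamily g 2 K} \subset \mono_{\hyperellipticSource g K}$. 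Since Lemma~\ref{lemma:monodromy-std}(ii) identifies the left-hand side with $\grouphyp g K$, the reverse containment follows.

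The only real obstacle is verifying that $f$ has geometrically connected generic fiber, so that surjectivity on $\pi_1$ follows from standard results on fundamental groups of stacks. This reduces to the classical presentation of $\hyperelliptic g K$ as the quotient stack of the space of squarefree degree-$(2g+2)$ polynomials (or binary forms) by the action of a connected change-of-coordinates group, which is routine. The remainder of the argument is an assembly of results already established in the excerpt.
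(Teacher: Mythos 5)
Your proposal is correct and takes essentially the same sandwich approach as the paper: the upper bound $\mono_{\hyperellipticSource g K} \subset \grouphyp g K$ comes from Theorem~\ref{monodromy-in-sym} together with the multiplier constraint, and the lower bound comes from comparison with $\standardFamily g 2 K$ via Lemma~\ref{lemma:monodromy-std}(ii). The only difference is that you invoke surjectivity of $\pi_1(\standardTarget g 2 K) \to \pi_1(\hyperelliptic g K)$ (and hence the geometric connectedness of the fibers of the classifying map) to get the lower containment, whereas mere functoriality of pullback of the universal family already gives $\mono_{\standardFamily g 2 K} \subset \mono_{\hyperellipticSource g K}$, which is all the corollary requires.
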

\begin{proof}
\emph{A priori}, we have the containments
\[
\mono_{\standardFamily g 2 K} \subset \mono_{\hyperellipticSource g K} \subset \grouphyp g K,
\]
the latter following from Corollary~\ref{monodromy-in-sym}. But Lemma~\ref{lemma:monodromy-std} implies that $\mono_{\standardFamily g 2 K} = \grouphyp g K$.
\end{proof}

\subsection{Finishing the Proof}\label{32isnotagoodscoreline}

We are now in position to complete the proof of Theorem~\ref{mainbldg}.
The main input to the proof is the following general theorem from \cite{landesman-swaminathan-tao-xu:rational-families}.
\begin{theorem}[\protect{\cite[Theorem 1.1]{landesman-swaminathan-tao-xu:rational-families}}]

	\label{theorem:main}
	Let $B, n > 0$, and suppose that the rational family $A \to U$ is non-isotrivial and has big monodromy, meaning that $\mono_A$ is open in $\GSp_{2g}(\zh)$. Let $\delta_\QQ$ be the index of the closure of the commutator subgroup of $\mono_A$ in $\mono_A \cap \Sp_{2g}(\zh)$, and let $\delta_K = 1$ for $K \neq \QQ$. Then $[\mono_A : \mono_{A_u}] \geq \delta_K$ for all $u \in U(K)$, and we have the following asymptotic statements:
			\[
				\frac{|\{u \in U(K) \cap \mathcal{O}_K^r : \lVert u \rVert \le B,\, [\mono_A : \mono_{A_u}] = \delta_K\}|}{|\{u \in U(K) \cap \mc{O}_K^r : \lVert u \rVert \le B\}|} = 1 + O((\log B)^{-n}), \text{ and}
			\]
\[
				\frac{|\{u \in U(K) : \on{Ht}(u) \leq B,\, [\mono_A : \mono_{A_u}] = \delta_K\}|}{|\{u \in U(K) : \on{Ht}(u) \le B\}|} = 1 + O((\log B)^{-n}),
			\]
	 where the implied constants depend only on $A \to U$ and $n$.
\end{theorem}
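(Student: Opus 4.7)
The plan is to follow the Hilbert-irreducibility-plus-large-sieve strategy of Zywina~\cite{zywina2010hilbert} for elliptic curves, adapted to arbitrary PPAV families, and to isolate the obstruction that produces $\delta_\QQ$ via an abelianization argument. The proof naturally splits into a lower bound on the index and a density-one upper bound.

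\textbf{Lower bound.} First I would establish $[\mono_A : \mono_{A_u}] \geq \delta_K$ for every $u \in U(K)$. For $K \neq \QQ$ there is nothing to prove. For $K = \QQ$, I would combine the identity $\mult \circ \rho_{A_u} = \chi$ from Remark~\ref{remark:det-rho-is-chi} with Kronecker--Weber. Since $\chi(G_\QQ) = \zh^\times$, both $\mult(\mono_A)$ and $\mult(\mono_{A_u})$ equal $\zh^\times$, giving
\[
[\mono_A : \mono_{A_u}] = [\mono_A \cap \Sp_{2g}(\zh) : \mono_{A_u} \cap \Sp_{2g}(\zh)].
\]
Kronecker--Weber implies that every continuous homomorphism from $G_\QQ$ to an abelian group factors through $\zh^\times$, so $\mono_{A_u}^{\ab}$ is a quotient of $\zh^\times$ that also surjects onto $\zh^\times$ via $\mult$; hence $\mult$ is an isomorphism on $\mono_{A_u}^{\ab}$, giving $\mono_{A_u} \cap \Sp_{2g}(\zh) = \overline{[\mono_{A_u}, \mono_{A_u}]} \subseteq \overline{[\mono_A, \mono_A]}$. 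Taking indices yields $[\mono_A : \mono_{A_u}] \geq \delta_\QQ$ by the very definition of $\delta_\QQ$.

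\textbf{Density-one upper bound.} For the asymptotic, it suffices to show that $|\{u : [\mono_A : \mono_{A_u}] > \delta_K, \|u\| \leq B\}| = O(B^r (\log B)^{-n})$. Any such $u$ satisfies $\mono_{A_u}(\ell^k) \subsetneq \mono_A(\ell^k)$ for some prime power $\ell^k$. Using that $\mono_A$ is open in $\GSp_{2g}(\zh)$, together with the Lie-theoretic analysis at odd primes from~\cite{landesman-swaminathan-tao-xu:rational-families} and the mod-$2$ refinement of Theorem~\ref{theorem:small-ab}, one reduces to a finite list of ``abelian'' exceptional subgroups (ruled out for $K = \QQ$ by the lower-bound analysis, and for $K \neq \QQ$ broken by the existence of totally split primes in any finite cyclic extension) plus, for each level $\ell^k$, the maximal proper subgroups $M \subsetneq \mono_A(\ell^k)$. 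For each such $M$, the condition $\rho_{A_u, \ell^k}(G_K) \subseteq M$ cuts out the $K$-points of a connected \'etale cover $V_{\ell^k, M} \to U$. Applying effective Chebotarev at residue fields $\kappa(\fp)$, together with a large-sieve inequality summed over $\ell^k$ up to a threshold $L = L(B) = (\log B)^\kappa$, yields the claimed asymptotic; the contribution from $\ell > L$ is controlled by a Serre-type uniformity showing $\rho_{A_u, \ell}$ is surjective outside finitely many primes, which follows from non-isotriviality and openness of $\mono_A$. Passing from the $\mathcal{O}_K^r$-lattice count to the projective-height count is standard via M\"obius inversion and introduces only an absorbable logarithmic loss.

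\textbf{Main obstacle.} The principal difficulty is obtaining effective Chebotarev bounds with polynomial dependence on the level $\ell^k$, uniformly over all relevant covers $V_{\ell^k, M} \to U$, so that one may sum over a growing range of primes without losing control of the implied constant. Secondarily, the prime $\ell = 2$ requires the refined group-theoretic analysis of Section~\ref{section:group-theory} (rather than the generic Lie-algebra estimates that work for $\ell \geq 3$), in order to ensure that no residual ``mod-$4$'' obstruction inflates the exceptional set. Assembling these ingredients with the height/lattice conversion is what produces the uniform error $O((\log B)^{-n})$ while keeping the implied constant dependent only on $A \to U$ and $n$.
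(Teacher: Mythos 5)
This theorem is not proved in the present paper at all: it is quoted verbatim from the companion paper \cite[Theorem 1.1]{landesman-swaminathan-tao-xu:rational-families} and used as a black box, so there is no internal proof to compare your attempt against. Judged on its own terms, your \textbf{lower bound} is essentially the standard (and correct) argument: from $\mult\circ\rho_{A_u}=\chi$ and $\QQ^{\ab}=\QQ^{\cyc}$ one gets $\mono_{A_u}\cap\Sp_{2g}(\zh)=\rho_{A_u}(\Gal(\ol\QQ/\QQ^{\cyc}))=\overline{[\mono_{A_u},\mono_{A_u}]}\subseteq\overline{[\mono_A,\mono_A]}$, which gives the index bound. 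One small wrinkle: your step ``a quotient of $\zh^\times$ surjecting onto $\zh^\times$ is an isomorphism'' is not a formal Hopfian fact ($\zh^\times$ is not topologically finitely generated); what saves it is that the composite $G_\QQ/\ker\chi\twoheadrightarrow\mono_{A_u}^{\ab}\xrightarrow{\mult}\zh^\times$ is the canonical identification, hence injective on the first factor. You should say that explicitly.

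The \textbf{density-one half}, however, is a program rather than a proof. You correctly name the ingredients of the Zywina-style strategy (reduction to maximal subgroups at each level, \'etale covers $V_{\ell^k,M}\to U$, effective Chebotarev, a sieve over levels up to $(\log B)^\kappa$), but every substantive analytic step is asserted, and your own ``main obstacle'' paragraph concedes that the uniformity of the Chebotarev input over all covers is unresolved. One assertion is actually wrong as stated: the claim that surjectivity of $\rho_{A_u,\ell}$ for all large $\ell$ (uniformly in $u$) ``follows from non-isotriviality and openness of $\mono_A$'' is false as a formal implication --- openness of the \emph{family's} monodromy says nothing directly about individual fibers, and controlling the fibers that fail at large primes is precisely what the sieve is for; in the companion paper this is handled by bounding, for each large $\ell$, the density of $u$ landing in proper subgroups, not by invoking a per-fiber uniformity theorem. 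So the outline is the right roadmap, but it does not constitute a proof of the cited theorem.
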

\begin{proof}[Proof of Theorem~\ref{mainbldg}]
	First, we explain how Theorem~\ref{theorem:main} applies to the standard families $\standardFamily g i K \rightarrow \standardTarget g i K$. By Lemma~\ref{lemma:monodromy-std}(ii), these families have big monodromy, so Theorem~\ref{theorem:main} applies.  Lemma~\ref{lemma:monodromy-std}(ii) says that $H_{\standardFamily g i K} = \grouphypboth g {(i\bmod 2)} K$. With this in mind, Corollary~\ref{theorem:r=2:special} implies that $\delta_\bq = 2$ in the statement of Theorem~\ref{theorem:main}.

	Next, we apply Theorem~\ref{theorem:main} to a rational family $C \rightarrow U$ represented by a map $U \to \hyperelliptic g K$ with connected geometric generic fiber. This hypothesis implies that $\pi_1(U) \to \pi_1(\hyperelliptic g K)$ is a surjection, cf.\ \cite[Corollary 5.3]{landesman-swaminathan-tao-xu:rational-families}, so the monodromy group of $C \to U$ is equal to that of the universal family over $\hyperelliptic g K$, and Corollary~\ref{monodromy-stack} implies that the universal family over $\hyperelliptic g K$ has monodromy group $\grouphyp g K$. At this point, Corollary~\ref{theorem:r=2:special} implies $\delta_\bq = 2$, as before.
\end{proof}

\section{Verification of the Examples}\label{verified}

The objective of this section is to prove our second main result, namely Theorem~\ref{exemplinongratia}. To verify that the example curves stated in Theorem~\ref{exemplinongratia} have maximal monodromy among members of $\hyperelliptic{g}{\QQ}$, we shall rely on two different sets of criteria, one adapted from~\cite{anni2017constructing}, and the other adapted from~\cite{seaweed}. We introduce these criteria in Section~\ref{sweenytime}; then, in Section~\ref{icheckthat}, we apply these criteria to check the example curves.

\subsection{Criteria for Having Maximal Monodromy}\label{sweenytime}

Let $g \in \{2,3\}$, and let $C$ be a genus-$g$ hyperelliptic curve over $\QQ$ given by the Weierstrass equation $y^2 = f(x)$, where $f(x) \in \bq[x]$ is a polynomial of degree $2g+2$; note that $C$ is a $\mathbb Q$-valued point of $\standardTarget{g}{2}{\QQ}$.

Let $J$ denote the Jacobian of $C$. We want to write down criteria for the associated monodromy group $H_J$ to be as large as possible in $H_{\hyperelliptic{g}{\mathbb Q}} \simeq \grouphyp{g}{\QQ}$, which by Theorem~\ref{mainbldg} is equivalent to having index $2$ in $\grouphyp{g}{\QQ}$.
We shall rely on the following lemma, which gives us two conditions under which maximal monodromy is attained:
\begin{lemma}
	\label{lemma:reduction-from-adelic-to-mod-l}
	Suppose $C$ is a hyperelliptic curve over $\mathbb Q$ with Jacobian $J$ satisfying
	\begin{align}\label{thethingIwanttocheck}
(\mono_J)_2 &= \gify{2^\infty}{2}^{-1}(S_{2g+2}), \text{ and} \\
\label{asillysequel}
\mono_J(\ell) &\supset \Sp_{2g}(\ZZ / \ell \ZZ) \text{ for every prime number } \ell \geq 3.
\end{align}
Then, $[\grouphyp{g}{\QQ} : \mono_J] = 2$.
\end{lemma}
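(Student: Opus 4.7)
The strategy is to apply the group-theoretic apparatus of Section~\ref{section:group-theory} to $H := \mono_J \subset \GSp_{2g}(\wh{\ZZ})$. Observe that hypothesis~\eqref{thethingIwanttocheck} is exactly the first bullet of Lemma~\ref{theorem:r=2} with $i = 2$, while hypothesis~\eqref{asillysequel} is the second bullet. Thus I would invoke Corollary~\ref{theorem:r=2:special} directly to conclude
\[
[\hyphat_{2g+2} : [\mono_J, \mono_J]] = 2.
\]

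The next step is to convert this statement about the commutator subgroup into an index for $\mono_J$ itself inside $\grouphyp{g}{\QQ}$. Since the cyclotomic character $\chi \colon G_\QQ \to \wh{\ZZ}^\times$ is surjective, we have $\chi(\QQ) = \wh{\ZZ}^\times$, so $\grouphyp{g}{\QQ}$ is the full preimage of $S_{2g+2}$ in $\GSp_{2g}(\wh{\ZZ})$, and the multiplier fits into a short exact sequence
\[
1 \longrightarrow \hyphat_{2g+2} \longrightarrow \grouphyp{g}{\QQ} \xrightarrow{\mult} \wh{\ZZ}^\times \longrightarrow 1.
\]
By Remark~\ref{remark:det-rho-is-chi}, the restriction of $\mult$ to $\mono_J$ equals $\chi$ and hence is surjective; a short snake-lemma diagram chase then identifies $[\grouphyp{g}{\QQ} : \mono_J]$ with $[\hyphat_{2g+2} : \mono_J \cap \hyphat_{2g+2}]$.

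Finally, I would exploit the sandwich $[\mono_J, \mono_J] \subseteq \mono_J \cap \hyphat_{2g+2} \subseteq \hyphat_{2g+2}$: together with $[\hyphat_{2g+2} : [\mono_J, \mono_J]] = 2$, this forces the desired index to lie in $\{1,2\}$. To rule out $1$, I would quote Theorem~\ref{mainbldg} applied to $C$ regarded as a $\QQ$-point of $\standardFamily{g}{2}{\QQ}$, which yields the lower bound $[\grouphyp{g}{\QQ} : \mono_J] \geq \delta_\QQ = 2$; were the index in question equal to $1$, the multiplier surjectivity would give $\mono_J = \grouphyp{g}{\QQ}$, contradicting this bound.

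None of this is genuinely hard: Corollary~\ref{theorem:r=2:special} does the substantive work, so the only ``obstacle'' is the bookkeeping to match the mod-$2$ and mod-$\ell$ hypotheses of the lemma with those of Lemma~\ref{theorem:r=2}, and to correctly cite the $\delta_\QQ = 2$ lower bound from Theorem~\ref{mainbldg}, which is already available by this point in the paper.
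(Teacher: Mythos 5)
Your proposal is correct in its core and shares the paper's main input: both you and the paper apply Lemma~\ref{theorem:r=2} and Corollary~\ref{theorem:r=2:special} to $H=\mono_J$ (whose hypotheses are exactly \eqref{thethingIwanttocheck} and \eqref{asillysequel}), and both use surjectivity of $\mult$ on $\mono_J$, i.e.\ surjectivity of the cyclotomic character over $\QQ$, to reduce the index $[\grouphyp g \QQ : \mono_J]$ to the index of $\mono_J\cap\hyphat_{2g+2}$ in $\hyphat_{2g+2}$ (your ``snake lemma'' is really just the coset-space bijection for a map of nonabelian group extensions, as in the paper's five-lemma diagram, but the conclusion is right). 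The genuine difference is the last step. The paper does not merely sandwich the index in $\{1,2\}$ and then quote a lower bound: it uses $\QQ^{\ab}=\QQ^{\cyc}$ (Kronecker--Weber) to obtain the exact identification $\mono_J\cap\Sp_{2g}(\wh\ZZ)=\rho_J(\Gal(\ol\QQ/\QQ^{\cyc}))=\rho_J(\Gal(\ol\QQ/\QQ^{\ab}))=[\mono_J,\mono_J]$, so the index is computed outright as $[\hyphat_{2g+2}:[\mono_J,\mono_J]]=2$. You instead rule out index $1$ by citing the bound $\delta_\QQ=2$ from Theorem~\ref{mainbldg}; this is not circular (that theorem is proved earlier and independently of this lemma), but it carries a caveat: the lower bound in Theorem~\ref{mainbldg} is stated only for points of the standard families, i.e.\ for curves $y^2=f(x)$ with $f$ monic, squarefree of degree $2g+2$, whereas the lemma is stated for an arbitrary hyperelliptic curve over $\QQ$, which need not admit such a model (hypothesis \eqref{thethingIwanttocheck} forces any Weierstrass model to have irreducible $f$ of degree $2g+2$, but does not force a monic one, nor even the existence of a $y^2=f(x)$ model). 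So as written your argument proves the lemma only for members of $\standardFamily g 2 \QQ$ --- enough for the paper's application to $C_2$ and $C_3$, but to obtain the statement in full generality you should replace the appeal to Theorem~\ref{mainbldg} by the Kronecker--Weber identification above, which is in any case the arithmetic fact underlying $\delta_\QQ=2$.
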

\begin{proof}
Since the maximal abelian extension $\QQ^{\on{ab}}$ is equal to the maximal cyclotomic extension $\QQ^{\on{cyc}}$, we have that
\begin{equation}\label{factcheck}
\rho_J(\Gal(\ol{\QQ}/\QQ^{\on{cyc}})) = \rho_J(\Gal(\ol{\bq}/\bq^{\ab})) = [\mono_J , \mono_J].
\end{equation}
Using~\eqref{factcheck}, we find that
\begin{align*}
[\grouphyp{g}{\QQ} : \mono_J] & = [\hyphat_{2g+2} : \rho_J(\Gal(\ol{\QQ}/\QQ^{\on{cyc}}))] \\
& = [\hyphat_{2g+2} : [\grouphyp{g}{\QQ},\grouphyp{g}{\QQ}]] \cdot [[\grouphyp{g}{\QQ}, \grouphyp{g}{\QQ}] : \rho_J(\Gal(\ol{\bq}/\bq^{\cyc}))] \\
& = 2 \cdot [[\hyphat_{2g+2}, \hyphat_{2g+2}] : [\mono_J, \mono_J]] ,
\end{align*}
where in the last step above we used the result of Corollary~\ref{theorem:r=2:special}. Thus, to prove that $H_J$ is maximal, it suffices to show that the inclusion $[\mono_J, \mono_J] \subset [\hyphat_{2g+2}, \hyphat_{2g+2}]$ is an equality.
The result then follows from Lemma~\ref{theorem:r=2}.
\end{proof}

Criterion~\eqref{asillysequel} may be broken down into two different sets of criteria by means of the following two propositions, adapted from~\cite{anni2017constructing} and~\cite{seaweed} respectively. The first set of criteria has the advantage that it implies the image is surjective
at all but finitely many primes,
although notably it does omit a finite list of primes $\ell$.

We first recall definitions from~\cite{anni2017constructing}.
\begin{definition}[\protect{\cite[Definition 1.2, Definition 1.3]{anni2017constructing}}]
	\label{definition:t-eisenstein}
	Let $t \geq 1$ be an integer and $p$ a prime. A polynomial $f(x) \defeq x^m + a_{m-1} x^{m-1} + \cdots + a_0 \in \mathbb Z_p[x]$
	is {\em $t$-Eisenstein} if $v(a_i) \geq t$ for $i > 0$ and $v(a_0) = t$, for $v$ the $p$-adic valuation.
	Further, suppose $q_1, \ldots, q_k$ are prime numbers and $f(x) \in \mathbb{Z}_p[x]$ is monic and squarefree.
	We say $f(x)$ is of {\em type} $t - \left\{ q_1, \ldots, q_k \right\}$ if it can be factored as
	$f(x) = h(x) \prod_{i=1}^k g_i(x-\alpha_i)$ over $\mathbb Z_p[x]$ for $\alpha_i \in \mathbb Z_p$ with
	$\alpha_i \not\equiv \alpha_j \bmod p$ for all $i \neq j$, $g_i(x)$ a $t$-Eisenstein polynomial
	of degree $q_i$, and $h(x) \bmod p$ a separable polynomial with $h(\alpha_i) \not \equiv 0 \bmod p$ for all $i$.
\end{definition}
The next proposition follows immediately upon combining the main results of~\cite{anni2017constructing}:
\begin{proposition}[\protect{\cite{anni2017constructing}}]
	\label{prop-anni}
Suppose $f \in \mathbb{Z}[x]$ satisfies the following properties:
\begin{enumerate}
\item There exist primes $q_1$, $q_2$, and $q_3$ such that
$$q_1 \leq q_2 < q_3 < q_1 + q_2 = 2g + 2.$$
\item There exist two distinct primes $p_{t_1}, p_{t_2} > g$ so that $f$ has type $1 - \left\{ 2 \right\}$ at $p_{t_1}$ and $p_{t_2}$.
\item There exists a prime $p_2 > 2g+2$ which is a primitive root modulo $q_1, q_2$, and $q_3$ so that $f$ has type $1 - \left\{ q_1, q_2 \right\}$ at $p_2 > 2g + 2$.
\item There exist a prime $p_3 > 2g+2$  which is a primitive root modulo $q_3$ such that $f$ has type $2-\left\{ q_3 \right\}$ at $p_3$.
\item Writing $f = x^{2g+2} + a_{2g+1} x^{2g+1} + \cdots + a_1x + a_0$ we have $a_0\equiv 2^{2g} \bmod 2^{2g+2}$,
	$a_{2g+1}\equiv 2 \bmod 2^{2g+2}$, and $a_i\equiv 0 \bmod 2^{2g+2-i}$ for $1 \leq i \leq 2g$.
\item For all primes $p \notin \left\{ 2,p_2,p_3 \right\}$ we have that $p^2 \nmid \disc f$, the discriminant of $f$.
\end{enumerate}
Let $J$ denote the Jacobian of the regular proper model for the affine curve $y^2 = f(x)$.
Then $\mono_J(\ell) \supset \Sp_{2g}(\ZZ / \ell \ZZ)$ for every $\ell > g$ so long as $\ell \not\in \{2,3,q_1, q_2, q_3, p_2, p_3\}$.
\end{proposition}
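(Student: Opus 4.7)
The plan is to treat Proposition~\ref{prop-anni} essentially as a bookkeeping assembly of the main theorems of~\cite{anni2017constructing}. Strategically, I would view each of the six numbered hypotheses as supplying one specific input that~\cite{anni2017constructing} requires in order to certify maximal mod-$\ell$ monodromy; once those are all in hand, the proposition reduces to quoting their classification result for subgroups of $\Sp_{2g}(\mathbb{F}_\ell)$ generated by the resulting inertia elements.

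First, I would identify the role of each Eisenstein-type hypothesis in terms of inertia at a bad prime. For $p \notin \{2\}$ with $f$ of type $1-\{q\}$ at $p$, the local factor $g(x-\alpha)$ contributes a cluster of $q$ Weierstrass points that collide mod $p$ with prescribed $p$-adic valuation; via the stable reduction/cluster picture, this forces the action of a generator of tame inertia $I_p$ on $J[\ell]$ to be a product of an element of order $q$ (acting as a $q$-cycle on those clustered Weierstrass points) times the identity on the complement. Applied to $p_{t_1}, p_{t_2}$ (type $1-\{2\}$) this produces two transvections in $\mono_J(\ell)$ at distinct support, and applied to $p_2$ (type $1-\{q_1,q_2\}$) and $p_3$ (type $2-\{q_3\}$) it produces elements of orders $q_1, q_2, q_3$ acting on prescribed symplectic subspaces. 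The primitive-root conditions on $p_2, p_3$ modulo $q_1, q_2, q_3$ guarantee that, after reducing mod $\ell$, these elements still have order exactly $q_i$ in $\Sp_{2g}(\mathbb{F}_\ell)$ whenever $\ell \neq q_i$, and moreover act irreducibly on the relevant $q_i$-dimensional $\mathbb{F}_\ell$-subspace.

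Next I would invoke the group-theoretic classification from~\cite{anni2017constructing}, which says (roughly) that for $\ell > g$ and $\ell \notin \{2, 3\}$, a subgroup $G \subseteq \Sp_{2g}(\mathbb{F}_\ell)$ which (i) contains a transvection, (ii) contains an element whose order is a prime $q_i$ with $q_1 \leq q_2 < q_3 < q_1 + q_2 = 2g+2$ acting irreducibly on an appropriate subspace, and (iii) acts absolutely irreducibly, must equal $\Sp_{2g}(\mathbb{F}_\ell)$. Hypothesis (1) is there to select the prime triple used in this classification; hypotheses (3) and (4) produce the required elements of orders $q_1, q_2, q_3$; hypothesis (2) supplies the transvection; hypothesis (6) on the discriminant guarantees that at all other primes the curve has semistable or good reduction so that no extra constraints enter (and in particular the image really is generated by the above inertias plus Frobenius lifts). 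Hypothesis (5), the mod-$2^{2g+2}$ congruences, is not needed for this mod-$\ell$ statement but is used separately to control the mod-$2$ part in~\eqref{thethingIwanttocheck}; I would simply note that it plays no role here.

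The main obstacle, and the only place where real care is required, is the translation from the $t$-Eisenstein hypotheses to honest elements of $\mono_J(\ell)$ with the advertised cycle structure, because this requires the cluster picture and the relationship between the $t$-Eisenstein data and the action of $I_p$ on $J[\ell]$ via the N\'eron model. Rather than redo this, I would cite the relevant propositions of~\cite{anni2017constructing} verbatim (they are stated in precisely the $t-\{q_1,\ldots,q_k\}$ language of Definition~\ref{definition:t-eisenstein}), check that the hypotheses $\ell > g$ and $\ell \notin \{2,3,q_1,q_2,q_3,p_2,p_3\}$ are exactly the running hypotheses there, and conclude by quoting their surjectivity theorem. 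The remaining details reduce to verifying that my six conditions match the hypotheses of the cited results line by line, which is mechanical.
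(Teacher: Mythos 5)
Your overall strategy coincides with the paper's: both proofs are an assembly job, matching hypotheses (1)--(4) to the hypotheses of the surjectivity theorem of \cite{anni2017constructing} (their conditions (G+$\varepsilon$), (2T), ($p_2$), ($p_3$) in \cite[Theorem 6.2]{anni2017constructing}) and then quoting that theorem for each $\ell$ in the stated range. To that extent the proposal is on the right track.

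However, there is a genuine gap: you assert that hypothesis (5), the congruences modulo $2^{2g+2}$, ``plays no role here'' and is only relevant to the $2$-adic criterion~\eqref{thethingIwanttocheck}. That is wrong, and it is precisely where your ``mechanical line-by-line verification'' would stall. The theorem of \cite{anni2017constructing} being cited carries an admissibility hypothesis (their condition (adm), \cite[Definition 4.6]{anni2017constructing}) at \emph{every} prime, including $p = 2$: inertia at $2$ acts on $J[\ell]$ for odd $\ell$, and their argument needs this action to be controlled, which is why admissibility at $2$ cannot be dropped. In the paper's proof, admissibility at $p_2$ and $p_3$ comes from their Lemmas 4.10 and 4.11, admissibility at semistable primes from their Lemma 4.9, semistability at odd primes $p \notin \{p_2, p_3\}$ from hypothesis (6) via $p^2 \nmid \disc f$, and semistability at $p = 2$ is obtained from \cite[Lemma 7.7]{anni2017constructing} \emph{using exactly hypothesis (5)}. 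Discarding (5) leaves the hypothesis (adm) unverified at $2$, so the cited theorem cannot be applied and the mod-$\ell$ conclusion for odd $\ell$ does not follow. A secondary, smaller point: rather than re-deriving inertia elements of orders $q_1, q_2, q_3$ and feeding them into a transvection-plus-irreducibility classification yourself, the intended (and cleaner) route is to verify the hypotheses of \cite[Theorem 6.2]{anni2017constructing} verbatim and then check that each admissible $\ell > g$ with $\ell \notin \{2,3,q_1,q_2,q_3,p_2,p_3\}$ falls under case (i) (semistability at $\ell$) or (iii) of that theorem; your sketch conflates this with the internal mechanism of their proof, which you would not need to reproduce.
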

\begin{proof}
	We now demonstrate why Proposition~\ref{prop-anni} follows immediately from the results of \cite{anni2017constructing}.
	It suffices to verify the hypotheses of \cite[Theorem 6.2]{anni2017constructing}.
	Their hypotheses (G+$\varepsilon$), (2T), ($p_2$) and ($p_3$) are respectively (1), (2), (3), and (4) above.
	Next, we note that $f$ satisfies their condition (adm):
	We have to show $f$ is admissible (in the terminology of
	\cite[Definition 4.6]{anni2017constructing}) at all primes $p$.
	$f$ is admissible at $p_2$ by \cite[Lemma 4.10]{anni2017constructing} and at $p_3$ by \cite[Lemma 4.11]{anni2017constructing}.
	Further, $f$ is admissible at all primes with semistable reduction by \cite[Lemma 4.9]{anni2017constructing}
	so it suffices to show $f$ is semistable at all primes $p \notin \left\{ p_2, p_3 \right\}$.
	At $p = 2$ this follows from \cite[Lemma 7.7]{anni2017constructing}, using (5) above, while at
	all odd primes this follows from \cite[Lemma 7.5]{anni2017constructing}, using (6) above.
	To conclude the proof, we only need check that all primes $\ell > g$ with $\ell \not\in \{2,3,q_1, q_2, q_3, p_2, p_3\}$
	satisfy either \cite[Theorem 6.2(i) or (iii)]{anni2017constructing}.
	If $\ell \neq p_2, p_3$ then $\ell$ satisfies \cite[Theorem 6.2(i)]{anni2017constructing} because we have seen $J/\mathbb Q_\ell$
	is semistable above. If $\ell = p_2$ or $p_3$ then $\ell$ satisfies \cite[Theorem 6.2(iii)]{anni2017constructing} by (3) and (4) above.
\end{proof}

The second set of criteria has the advantage that it is simpler to state and works for every odd prime $\ell$.
The following criteria have, in essence, appeared in several papers including~\cite[Theorem 1.1]{renyaDW:classification-of-subgroups-of-symplectic-groups-over-finite-fields},~\cite[Theorem 1.1]{hall:big-symplectic-or-orthogonal-monodromy-modulo-l}, and~\cite[Proposition 2.2]{seaweed}.
\begin{proposition}
	\label{proposition:zywina-criterion}
	Let $g \ge 2$, and let $\ell \ge 3$ be prime. Consider a subgroup $H(\ell) \subset \GSp_{2g}(\mathbb{F}_\ell)$ satisfying the following conditions:
	\begin{enumerate}
		\item[\customlabel{propa}{(A)}] $H(\ell)$ contains a transvection, by which we mean an element with determinant $1$ that fixes a codimension-$1$ subspace.
		\item[\customlabel{propb}{(B)}] The action of $H(\ell)$ on $(\ZZ/\ell \ZZ)^{2g}$ is irreducible, in the sense that there are no nontrivial invariant subspaces.
\item[\customlabel{propc}{(C)}] The action of $H(\ell)$ on $(\ZZ / \ell \ZZ)^{2g}$ is primitive, in the sense that there does not exist a decomposition $(\ZZ / \ell \ZZ)^{2g} \simeq V_1 \oplus \dots \oplus V_k$ with $H(\ell)$ permuting the $V_i$'s.
	\end{enumerate}
	Then we have that $H(\ell) \supset \Sp_{2g}(\mathbb{Z} / \ell \ZZ)$.
\end{proposition}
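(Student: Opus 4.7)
The proposition is essentially the classification of irreducible primitive subgroups of $\Sp_{2g}(\mathbb{F}_\ell)$ that contain a transvection, and the plan is to reduce to this classification by passing to the normal subgroup $N \subset H(\ell)$ generated by the transvections of $H(\ell)$.

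First I would observe that $N$ is normal in $H(\ell)$ (conjugation carries transvections to transvections) and nontrivial by~\ref{propa}. I would also check that any transvection $t \in \GSp_{2g}(\mathbb{F}_\ell)$ automatically lies in $\Sp_{2g}(\mathbb{F}_\ell)$: writing $W$ for the codimension-$1$ subspace fixed pointwise by $t$ and $m$ for the multiplier of $t$, the identity $m \langle v, w \rangle = \langle tv, tw \rangle = \langle v, w \rangle$ for $v, w \in W$ shows that $m \neq 1$ would force $\langle -, - \rangle|_W = 0$, hence $W \subset W^\perp$, contradicting $\dim W^\perp = 1 < 2g - 1 = \dim W$. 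Thus $N \subset \Sp_{2g}(\mathbb{F}_\ell)$.

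Next I would use Clifford theory to transfer the hypotheses from $H(\ell)$ down to $N$. Normality of $N$ in $H(\ell)$ together with irreducibility of $H(\ell)$ imply that $\mathbb{F}_\ell^{2g}$ decomposes as a direct sum of $N$-isotypic components permuted transitively by $H(\ell)$; primitivity~\ref{propc} forces there to be only one such component, so $\mathbb{F}_\ell^{2g} \cong m \cdot W$ with $W$ an irreducible $N$-module, and a short eigenspace count for a transvection (whose $+1$-eigenspace has codimension $1$) eliminates $m > 1$. Primitivity of $N$ follows as well, because any $N$-block decomposition is automatically $H(\ell)$-stable.

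Finally I would invoke the classification~\cite[Theorem 1.1]{renyaDW:classification-of-subgroups-of-symplectic-groups-over-finite-fields} (cf.~also~\cite[Theorem 1.1]{hall:big-symplectic-or-orthogonal-monodromy-modulo-l} and~\cite[Proposition 2.2]{seaweed}), which states that an irreducible primitive subgroup of $\Sp_{2g}(\mathbb{F}_\ell)$ containing a transvection must contain all of $\Sp_{2g}(\mathbb{F}_\ell)$; applying this to $N$ yields $H(\ell) \supset N \supset \Sp_{2g}(\mathbb{F}_\ell)$. The main obstacle is this final classification, which I would cite rather than reprove: in Aschbacher-style language it requires ruling out imprimitive wreath products, field-extension subgroups such as $\Sp_{2g/r}(\mathbb{F}_{\ell^r})$, tensor-product subgroups, smaller classical subgroups like orthogonal groups, and almost-simple candidates such as the symmetric-group embeddings $S_{2g+1}, S_{2g+2}$ that notably arise in hyperelliptic monodromy. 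The presence of a transvection is incompatible with the field-extension, tensor, and most almost-simple cases, and primitivity kills the wreath products, which together is what makes the classification succeed.
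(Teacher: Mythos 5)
The paper itself gives no argument for this proposition: it is quoted from the literature, with the work done by the citations \cite{renyaDW:classification-of-subgroups-of-symplectic-groups-over-finite-fields}, \cite{hall:big-symplectic-or-orthogonal-monodromy-modulo-l}, \cite[Proposition 2.2]{seaweed}, and your proposal ultimately rests on exactly the same citation, so at its core it is the same proof. The difference is the scaffolding you wrap around it, and there one step is wrong as stated: your justification that $N$ (the normal subgroup generated by the transvections of $H(\ell)$) is primitive, namely that ``any $N$-block decomposition is automatically $H(\ell)$-stable.'' That is false in general: if $V = V_1 \oplus \cdots \oplus V_k$ is a system of imprimitivity for a normal subgroup $N$ and $h \in H(\ell)$, then $\{hV_i\}$ is again a block system for $N$ but need not be the same one, and indeed a primitive linear group can contain irreducible imprimitive normal subgroups (normalizers of monomial or extraspecial-type groups are the standard examples). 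So primitivity of $H(\ell)$ does not descend to $N$ by the argument you give, and since your final appeal is to a classification of irreducible \emph{primitive} subgroups of $\Sp_{2g}(\FF_\ell)$ containing a transvection applied to $N$, this is a genuine gap in the proof as written.

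The gap is easily repaired, however, because the detour through $N$ is unnecessary: the cited results are stated for the given subgroup of $\GSp_{2g}(\FF_\ell)$ itself. Over the prime field, \cite[Theorem 1.1]{renyaDW:classification-of-subgroups-of-symplectic-groups-over-finite-fields} gives the trichotomy that a subgroup of $\GSp_{2g}(\FF_\ell)$ containing a nontrivial transvection is reducible, or imprimitive, or contains $\Sp_{2g}(\FF_\ell)$; hypotheses (B) and (C) eliminate the first two branches for $H(\ell)$ directly, and \cite[Proposition 2.2]{seaweed} is essentially this statement verbatim. With that route, your (correct) observations that a transvection in $\GSp_{2g}(\FF_\ell)$ has multiplier $1$, and your Clifford-theoretic multiplicity-one argument (which is fine: the isotypic components would give an $H(\ell)$-imprimitivity decomposition, and the codimension-one fixed space of a transvection forces multiplicity $m=1$), are both dispensable.
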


\subsection{Checking the Criteria}\label{icheckthat}

The remainder of the paper is devoted to using the criteria introduced in Section~\ref{sweenytime} to verify the examples declared in Theorem~\ref{exemplinongratia}.

\subsubsection{Criterion~\eqref{thethingIwanttocheck}: The $2$-adic Component} \label{subsection:2-adic}

The following lemma allows us to verify criterion~\eqref{thethingIwanttocheck}:

\begin{lemma} \label{lemma:2-adic}
	Let $g \in \{2,3\}$, and let $H_2 \subset \hypaddict_{2g+2}$ be a closed subgroup. If $H_2(2) = S_{2g+2}$, then we have that $H_2 = \hypaddict_{2g+2}$.
\end{lemma}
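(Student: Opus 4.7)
The plan is to proceed by induction on $k$, reducing to showing $H_2(2^k) = \hypaddict_{2g+2}(2^k)$ for every $k \geq 1$; this suffices because $H_2$ is closed in the profinite group $\hypaddict_{2g+2}$. The base case $k = 1$ is the hypothesis. For the inductive step, since $H_2(2^{k+1})$ surjects onto $\hypaddict_{2g+2}(2^k)$, the task reduces to showing that the $S_{2g+2}$-invariant $\mathbb F_2$-subspace $L_k := H_2(2^{k+1}) \cap \ker(\fy{2^{k+1}}{2^k})$ equals the whole kernel, which I identify with $\mathfrak{sp}_{2g}(\mathbb F_2)$. Invariance of $L_k$ under $S_{2g+2}$ follows because $H_2(2^{k+1})$ surjects onto $S_{2g+2}$, and conjugation by lifts of elements of $S_{2g+2}$ preserves $L_k$.

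The inductive step for $k \geq 2$ is relatively routine. Since the inductive hypothesis supplies $\ker(\fy{2^k}{2^{k-1}}) \subset H_2(2^k)$, each element $1 + 2^{k-1}\Lambda$ of this kernel lifts to some $\tilde x = 1 + 2^{k-1}\tilde\Lambda \in H_2(2^{k+1})$. For $k \geq 3$, a direct computation yields $\tilde x^2 \equiv 1 + 2^k \Lambda \pmod{2^{k+1}}$, independent of the chosen lift, so $L_k = \mathfrak{sp}_{2g}(\mathbb F_2)$ immediately. For $k = 2$ the analogous calculation gives $\tilde x^2 \equiv 1 + 4(\Lambda + \Lambda^2) \pmod 8$; I would combine this with the commutator identity $[\tilde x, \tilde y] \equiv 1 + 2^k[\Lambda_x, \Lambda_y] \pmod{2^{k+1}}$, exploiting that $\Lambda + \Lambda^2 = \Lambda$ for nilpotent $\Lambda$ (such as $x_{12} \otimes y_{ii}$), to span $\mathfrak{sp}_{2g}(\mathbb F_2)$ by arguments mirroring the proof of Lemma~\ref{lemma:gsp-perfect}.

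The principal obstacle is the first inductive step, $k = 1$: showing $L_1 = \mathfrak{sp}_{2g}(\mathbb F_2)$. Here I would lift the transposition $T_1 = \id_{2g} + x_{12} \otimes y_{11}$ of Lemma~\ref{lemma:s-action} to some $\tilde T_1 \in H_2(4)$. Because $(x_{12} \otimes y_{11})^2 = 0$, a direct expansion yields $\tilde T_1^2 \equiv 1 + 2(x_{12} \otimes y_{11}) \pmod 4$ up to an anticommutator term that itself lies in $L_1$. Applying the same computation to the other transpositions $T_{2n+1}$ from Lemma~\ref{lemma:s-action} and conjugating by $S_{2g+2}$ in $H_2(4)$ will generate an $\mathbb F_2[S_{2g+2}]$-submodule of $L_1$. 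The technical crux---and the reason $g$ is restricted to $\{2, 3\}$---is a direct module-theoretic verification that this submodule coincides with all of $\mathfrak{sp}_{2g}(\mathbb F_2)$. The modest dimensions ($10$ for $g = 2$ and $21$ for $g = 3$), together with the explicit basis in Remark~\ref{remark:sp-basis} and the explicit $S_{2g+2}$-action from Lemma~\ref{lemma:s-action}, make this case-by-case computation tractable by hand.
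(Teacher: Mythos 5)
There is a genuine gap, and it sits exactly where you locate the ``principal obstacle'': the step $k=1$. Your levels $k\ge 2$ are indeed routine, because once $H_2(4)$ is full, the elements $\id+2^k(\Lambda+\Lambda^2)$ and $\id+2^k[\Lambda_x,\Lambda_y]$ obtained from squares and commutators of lifts are independent of the chosen lifts, and their span is all of $\mathfrak{sp}_{2g}(\ZZ/2\ZZ)$. But at $k=1$ your computation gives, for a lift $\wt T=\id+N+2A$ of a transposition, $\wt T^2\equiv \id+2\bigl(N+NA+AN\bigr)\pmod 4$ with $A$ unknown and depending on the subgroup $H_2$. Your assertion that the anticommutator term ``itself lies in $L_1$'' is not justified: only the sum $N+NA+AN$ is known to lie in $L_1$, so you cannot isolate $N=x_{12}\otimes y_{11}$. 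Consequently the $\FF_2[S_{2g+2}]$-submodule you propose to compute is generated by elements that are only known up to uncontrolled corrections, and no ``direct module-theoretic verification'' can be carried out independently of $H_2$. What is really being asked at this level is whether a \emph{proper} subgroup of $\hypaddict_{2g+2}(4)$ (equivalently, of the mod-$8$ group) can surject onto $S_{2g+2}$, i.e.\ a non-splitting statement about the extension of $S_{2g+2}$ by quotients of $\mathfrak{sp}_{2g}(\ZZ/2\ZZ)$; knowing the submodule lattice alone does not settle this, and your sketch never engages with the splitting question.

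This is precisely the point at which the paper abandons pure-thought arguments: for $g=2$ it uses $S_6=\Sp_4(\ZZ/2\ZZ)$ to invoke a separate lifting theorem for $\Sp_4(\ZZ_2)$, and for $g=3$ it combines a Serre-type Frattini argument (reducing to the mod-$8$ group) with a \texttt{magma} enumeration of maximal subgroups of $\hypaddict_8(8)$ showing none surjects onto $S_8$ mod $2$. A further warning sign for your plan: the module-theoretic computation you describe would be essentially uniform in $g$ (the $S_{2g+2}$-action on $\mathfrak{sp}_{2g}(\ZZ/2\ZZ)$ and the basis of Remark~\ref{remark:sp-basis} behave the same for all $g$), so if it sufficed it would prove Conjecture~\ref{conjecture:surjective-mod-2-implies-surjective-2-adically} in full, which the authors explicitly leave open; the restriction to $g\in\{2,3\}$ in the paper reflects the need for the cited theorem, respectively the finite computation, not merely the tractability of a hand calculation.
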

\begin{proof}
	When $g = 2$, the inclusion $S_6 \subset \Sp_4(\bz / 2 \bz)$ is an equality, so the lemma follows from~\cite[Theorem 1]{landesman-swaminathan-tao-xu:lifting-symplectic-group}. For the rest of the proof, we take $g = 3$. Note that an easy generalization of the argument given in~\cite[Lemma 3, Section IV.3.4]{serre1989abelian} shows that, if $H \subset \Sp_{6}(\mathbb Z_2)$ is a closed subgroup satisfying $\hypaddict_{8}(8) = H(8)$, then $H_2 = \fy{2^{\infty}}{2}^{-1}(H(2))$.
So it suffices to show that
$\hypaddict_{8}(8) = H(8)$.
	Indeed, the following \texttt{magma} code verifies that there are no strict subgroups of
$\hypaddict_{8}(8)$ with $\bmod$-$2$ reduction equal to
$\hypaddict_{8}(2) = S_{8}$.
\vspace*{0.1in}
\begin{adjustwidth}{0.5in}{0in}
\begin{alltt}
G := GL(6,quo<Integers()|8>);
e := elt<G| 1,0,0,0,0,0,1,1,0,0,0,0,0,0,1,0,0,0,
\qquad 0,0,0,1,0,0,0,0,0,0,1,0,0,0,0,0,0,1>;
f := elt<G|1,1,0,0,0,0,0,1,1,0,0,0,1,1,1,1,0,0,
\qquad 1,1,0,1,1,0,1,1,1,1,1,1,1,1,1,1,0,1>;
H := sub<G|e,f>;
maximals := MaximalSubgroups(H);
grp, f := ChangeRing(G, quo<Integers()|2>);
for K in maximals do
    if #f(K{\`{}}subgroup) eq #H then
        assert false;
    end if;
end for; \qedhere
\end{alltt}
\end{adjustwidth}
\end{proof}

Recall from the discussion in Section~\ref{sec:single} that $\mono_J(2) = S_{2g+2}$ if and only if the polynomial $f(x)$ has Galois group $S_{2g+2}$. A simple {\tt magma} computation that this is the case for the polynomials $f(x)$ associated to the curves stated in Theorem~\ref{exemplinongratia}.
Then Lemma~\ref{lemma:2-adic} tells us that $(\mono_J)_2 = \hypaddict_{2g+2}$, thus verifying the criterion~\eqref{thethingIwanttocheck}.

\subsubsection{Criterion~\eqref{asillysequel}: The Genus-$2$ Example}

We now verify the genus-$2$ example. We first apply Proposition~\ref{prop-anni}. To verify the conditions (1)-(6) on the polynomial $f(x)$, we make the following choices:
$$q_1 = q_2 = 3,\, q_3 = 5,\, p_{t_1} = 3,\, p_{t_2} = 5,\, p_2 = 17,\, p_3 = 7.$$
Condition (1) is clearly satisfied and
conditions (2)-(4) are satisfied upon observing that $f(x)$ admits the following factorizations:
\begin{align*}
( x^4 + x^3 + x^2 + x + 1 )( x^2 - 3 ) & \bmod 3^2 \\
	( x^4 + x^2 + x + 1 )( x^2 - 5 ) & \bmod 5^2 \\
	( x^3 - 17 ) ( ( x-1 )^3 - 17 ) & \bmod 17^2 \\
	( x-1 )( x^5 - 7^2 ) & \bmod 7^3.
\end{align*}
Condition (5) is verified by reducing $f$ modulo $2^{2g+2} = 2^6 = 64$.
Finally, the computer verifies that the prime factorization of $\disc f$ is given by
\begin{equation*}
	\disc f = 3 \cdot 5 \cdot 7^8 \cdot 17^4 \cdot 421 \cdot 6397 \cdot 103434941173345262214445927 \cdot 4899652830439610728976665849.
	\end{equation*}
Hence, Proposition~\ref{prop-anni} tells us that condition~\eqref{asillysequel} holds for every odd prime $\ell$ satisfying $\ell \not\in \{3,5,7,17\}$.

To deal with the four remaining primes $\ell$, we utilize the criteria given in Proposition~\ref{proposition:zywina-criterion}.
First, we show the existence of a transvection (condition (A) of Proposition~\ref{proposition:zywina-criterion}).
Indeed, this follows from \cite[Lemma 2.9]{anni2017constructing}, which says that if there is a prime $p \nmid 2 \ell$ such that $f(x)$ has type $1 - \left\{ 2 \right\}$ when viewed as a polynomial in $\mathbb Z_p[x]$, then $J[\ell]$ contains a transvection.
For $\ell \in \left\{ 5,7,17 \right\}$ this follows by taking $p = 3$ while for $\ell =3$ this follows by taking $p = 5$.

To complete the proof, it suffices to verify conditions (B) and (C) of Proposition~\ref{proposition:zywina-criterion}.
For $p$ be a prime of good reduction of $J$, let $\frob_p \in G_{\QQ}$ denote the corresponding Frobenius element, and let $\charpoly_p(T) \in \ZZ[t]$ denote the characteristic polynomial of $\rho_J(\frob_p) \in \GSp_{2g}(\wh{\ZZ})$. The next proposition gives us a criterion to check irreducibility and primitivity together (conditions (B) and (C)):

\begin{proposition}[\protect{\cite[Proof of Lemma 7.2]{seaweed}}]\label{prop-irrop}
	Fix a prime $\ell \ge 3$. Suppose there exists $p \neq \ell$ of good reduction such that $\charpoly_p(T)$ is irreducible modulo $\ell$ and $\ell \nmid \operatorname{tr}(\frob_p)$. Then $H(\ell)$ acts irreducibly and primitively on $(\mathbb{F}_\ell)^{2g}$.
\end{proposition}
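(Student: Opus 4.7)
The plan is to exploit the hypothesis that $\rho_J(\frob_p) \in H(\ell)$ has characteristic polynomial (mod $\ell$) which is irreducible of degree $2g$, together with the nonvanishing of its trace, to rule out both invariant subspaces and systems of imprimitivity.

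For irreducibility, I would argue as follows. Suppose $V \subset \mathbb{F}_\ell^{2g}$ is a nonzero proper $H(\ell)$-invariant subspace. Since $\rho_J(\frob_p) \in H(\ell)$, the operator $g := \rho_J(\frob_p)$ preserves $V$, so the characteristic polynomial of $g|_V$ is a monic factor of $\charpoly_p(T) \bmod \ell$ of degree strictly between $0$ and $2g$. This contradicts the irreducibility of $\charpoly_p(T) \bmod \ell$.

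For primitivity, suppose for contradiction that $\mathbb{F}_\ell^{2g} = V_1 \oplus \cdots \oplus V_k$ with $k \geq 2$ and $H(\ell)$ permuting the $V_i$. Then $g$ induces a permutation $\sigma$ of $\{V_1,\ldots,V_k\}$. On each $\sigma$-orbit $\{V_{i_1},\ldots,V_{i_m}\}$ of length $m$, the restriction of $g$ to $V_{i_1} \oplus \cdots \oplus V_{i_m}$ can be written as a block matrix cyclically moving one summand to the next; when $m > 1$ the diagonal blocks all vanish, so this restriction contributes $0$ to $\operatorname{tr}(g)$. Consequently
\[
\operatorname{tr}(g) = \sum_{\sigma(i) = i} \operatorname{tr}(g|_{V_i}) \pmod{\ell}.
\]
If $\sigma$ has a fixed index $i$, then $V_i$ is a nonzero proper $g$-invariant subspace (it is proper because $k \geq 2$), contradicting the irreducibility of $\charpoly_p(T)\bmod\ell$ as in the first paragraph. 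Hence $\sigma$ must be fixed-point-free, which forces $\operatorname{tr}(g) \equiv 0 \pmod{\ell}$, contradicting the hypothesis $\ell \nmid \operatorname{tr}(\frob_p)$.

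The argument is essentially mechanical once the trace computation on block-permutation matrices is observed; the only thing to be careful about is the bookkeeping in the primitivity step, namely that every $\sigma$-orbit of length at least two contributes zero trace, so that the only way for $\operatorname{tr}(g)$ to be nonzero is through a $\sigma$-fixed block $V_i$—which then constitutes a nontrivial proper invariant subspace and is ruled out by irreducibility of $\charpoly_p(T) \bmod \ell$. No other obstacle should arise; in particular, the assumption $p \neq \ell$ of good reduction is used only to ensure that $\rho_J(\frob_p)$ is well-defined modulo $\ell$ with the stated characteristic polynomial.
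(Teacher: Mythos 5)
Your argument is correct and is essentially the same as the one the paper relies on by citation to Zywina's proof of Lemma 7.2: irreducibility of $\charpoly_p(T) \bmod \ell$ rules out any nonzero proper invariant subspace (including any block fixed by $\rho_J(\frob_p)$ in a putative system of imprimitivity), and a fixed-point-free permutation of the blocks would force $\operatorname{tr}(\frob_p) \equiv 0 \pmod{\ell}$, contrary to hypothesis. No gaps; the paper gives no independent proof beyond this standard argument.
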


A simple {\tt magma} calculation shows that for $\ell \in \{3,17\}$, we can apply Proposition~\ref{prop-irrop} with
$$\on{ch}_{401}(T) = T^4 - 49T^3+1257T^2-19649T+160801.$$
Likewise, for $\ell = 5$, we can use
$$\on{ch}_{61}(T) = T^4 + 6T^3 + 54T^2 + 366T + 3721,$$
and for $\ell = 7$, we can use
$$\on{ch}_{277}(T) = T^4 + 31T^3 + 765T^2 + 8587T + 76729.$$
This completes the verification that the curve $C_2$ in Theorem~\ref{exemplinongratia} has maximal monodromy.

\subsubsection{Criterion~\eqref{asillysequel}: The Genus-$3$ Example}

We now verify the genus-$3$ example. We begin again by applying Proposition~\ref{prop-anni}. To verify the conditions (1)-(6) on the polynomial $f(x)$, we make the following choices:
$$q_1 = 3, \,  q_2 = 5,\, q_3 = 7,\, p_{t_1} = 5,\, p_{t_2} = 13,\, p_2 = 17,\, p_3 = 19.$$
Condition (1) is clearly satisfied and
conditions (2)-(4) are satisfied upon observing that $f(x)$ admits the following factorizations:
\begin{align*}
( x^6 + x^3 + x^2 + 1 )( x^2 + 5 ) & \bmod 5^2 \\
	 (x^6 + 51x^5 + 12 x^4 + 70x^3 + 82x^2 + 41x + 158)((x-10)^2 + 143(x-10)+78)  & \bmod 13^2 \\
	((x-1)^3 + 17)(x^5 + 17) & \bmod 17^2 \\
	(x+1)(x^7 + 361) & \bmod 19^3.
\end{align*}
Condition (5) is verified by reducing $f$ modulo $2^{2g+2} = 2^8 = 256$.
Finally, the computer verifies that the prime factorization of $\disc f$ is given by
\begin{align*}
	& \disc f = 2^{44} \cdot 5 \cdot 13 \cdot 17^6 \cdot 19^{12} \cdot 409 \cdot 71347 \cdot
249200273817326443 \cdot 2259862376409853901527 \cdot \\
& \qquad \qquad
\qquad 76378336963241484055881774103 \cdot 3700557180228322572272219236151
.
	\end{align*}
Hence, Proposition~\ref{prop-anni} tells us that condition~\eqref{asillysequel} holds for every odd prime $\ell$ satisfying $\ell \not\in \{3,5,7,13,17,19\}$.

To deal with the four remaining primes $\ell$, we again utilize the criteria given in Proposition~\ref{proposition:zywina-criterion}.
First, we show the existence of a transvection (condition (A) of Proposition~\ref{proposition:zywina-criterion}).
This follows from~\cite[Lemma 2.9]{anni2017constructing}, which says that if there is a prime $p \nmid 2 \ell$ such that $f(x)$ has type $1 - \left\{ 2 \right\}$ when viewed as a polynomial in $\mathbb Z_p[x]$, then $J[\ell]$ contains a transvection.
For $\ell \in \left\{ 3, 7,13,17,19 \right\}$ this follows by taking $p = 5$ while for $\ell =5$ this follows by taking $p = 13$.

To complete the proof, it suffices to verify conditions (B) and (C) of Proposition~\ref{proposition:zywina-criterion}. A simple {\tt magma} calculation shows that for $\ell = 3$, we can apply Proposition~\ref{prop-irrop} with
$$\on{ch}_{101}(T) = T^6 + 10T^5 + 60T^4 + 222T^3 + 6060T^2 + 102010T + 1030301.$$
Likewise, for $\ell = 5$, we can use
$$\on{ch}_{89}(T) = T^6 - 3T^5 + 93T^4 + 40T^3 + 8277T^2 - 23763T + 704969,$$
for $\ell \in \{7, 17\}$, we can use
$$\on{ch}_{127}(T) = T^6 - 12T^5 + 8T^4 + 548T^3 + 1016T^2 - 193548T + 2048383,$$
and for $\ell \in \{13,19\}$, we can use
$$\on{ch}_{103}(T) = T^6 - 7T^5 + 55T^4 - 191T^3 + 5665T^2 - 74263T + 1092727
.$$
This completes the verification that the curve $C_3$ in Theorem~\ref{exemplinongratia} has maximal monodromy.

\section{Conjecture Regarding Maximal Adelic Image of Hyperelliptic Curves}

As we saw in Section~\ref{symbed}, the $\bmod$-$2$ monodromy of a hyperelliptic
curve $y^2 = f(x)$ always lies in the subgroup $S_{2g+2} \subset \Sp_{2g}(\mathbb Z/2 \mathbb Z)$. Further, the $\bmod$-$2$ monodromy will be all of $S_{2g+2}$
if and only if the spitting field of $f(x)$ is as large as possible;
i.e., has Galois group $S_{2g+2}$ over the base field $K$.
In Lemma~\ref{lemma:2-adic}, we saw that for $g = 2$ or $3$,
if the $\bmod$-$2$ monodromy is surjective modulo $2$, then it is surjective
$2$-adically. We conjecture that this pattern continues to hold in higher
genera:
\begin{conjecture}
	\label{conjecture:surjective-mod-2-implies-surjective-2-adically}
	Let $g \geq 2$ and let $H \subset \hypaddict_{2g+2}$ be a closed
	subgroup. If $H(2) = S_{2g+2}$ then $H = \hypaddict_{2g+2}$.
\end{conjecture}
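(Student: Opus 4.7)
The plan is to prove the conjecture by induction on $k \geq 1$, showing that $H(2^k) = \hypaddict_{2g+2}(2^k)$ for every $k$; since $H$ is closed, this will force $H = \hypaddict_{2g+2}$. The base case $k = 1$ is the hypothesis. For the inductive step, suppose that $H(2^k) = \hypaddict_{2g+2}(2^k)$, and consider the short exact sequence
\[
1 \to \mathfrak{sp}_{2g}(\mathbb F_2) \to \hypaddict_{2g+2}(2^{k+1}) \xrightarrow{\fy{2^{k+1}}{2^k}} \hypaddict_{2g+2}(2^k) \to 1,
\]
where we identify $\ker \fy{2^{k+1}}{2^k}$ with $\mathfrak{sp}_{2g}(\mathbb F_2)$ via $\Lambda \mapsto \id_{2g} + 2^k \Lambda$. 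Let $K := H(2^{k+1}) \cap \ker \fy{2^{k+1}}{2^k}$, regarded as an $\mathbb F_2$-subspace of $\mathfrak{sp}_{2g}(\mathbb F_2)$. The conjugation action of $H(2^{k+1})$ stabilizes $K$ and factors through $\hypaddict_{2g+2}(2^k)$; since the pro-$2$ part $\ker\fy{2^k}{2}$ acts trivially on $\mathfrak{sp}_{2g}(\mathbb F_2)$ (being itself a $2$-group acting on an $\mathbb F_2$-vector space), the action factors further through $S_{2g+2}$. Hence $K$ is an $S_{2g+2}$-submodule of $\mathfrak{sp}_{2g}(\mathbb F_2)$.

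If $K = \mathfrak{sp}_{2g}(\mathbb F_2)$, counting gives $H(2^{k+1}) = \hypaddict_{2g+2}(2^{k+1})$, completing the inductive step. Otherwise, setting $M := \mathfrak{sp}_{2g}(\mathbb F_2)/K$, the subgroup $H(2^{k+1})/K$ gives a splitting of the pushed-out extension
\[
1 \to M \to \hypaddict_{2g+2}(2^{k+1})/K \to \hypaddict_{2g+2}(2^k) \to 1.
\]
So it suffices to prove that for every proper $S_{2g+2}$-submodule $K \subsetneq \mathfrak{sp}_{2g}(\mathbb F_2)$, the extension class of the displayed sequence is nonzero in $H^2(\hypaddict_{2g+2}(2^k), M)$.

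To carry this out, first classify the $S_{2g+2}$-submodules of $\mathfrak{sp}_{2g}(\mathbb F_2)$. Using the tensor decomposition $M \simeq N_1 \otimes N_2$ and the explicit description of the $S_{2g+2}$-action on $N_1, N_2$ from Section~\ref{subsection:s-action}, this is a question in the modular representation theory of $S_{2g+2}$ over $\mathbb F_2$; I expect only a short list of submodules, including the line spanned by $\id_{2g}$, its orthogonal under the Killing-like pairing $(\Lambda_1, \Lambda_2) \mapsto \on{tr}(\Lambda_1 \Lambda_2)$, and possibly an intermediate Specht-module layer coming from the standard representation of $S_{2g+2}$ on $W = t^\perp/\langle t \rangle$. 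Next, for each such proper quotient $M$, attack $H^2(\hypaddict_{2g+2}(2^k), M)$ via the Hochschild-Serre spectral sequence for $\ker \fy{2^k}{2} \to \hypaddict_{2g+2}(2^k) \to S_{2g+2}$; the action on $M$ factors through $S_{2g+2}$, so for large $k$ the calculation reduces to $H^*(S_{2g+2}, M)$ together with transgressions from the pro-$2$ kernel.

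The hardest part will be showing that the extension class is genuinely nonzero for each allowed $M$. Since the extension is the universal one coming from $\hypaddict_{2g+2} \hookrightarrow \Sp_{2g}(\mathbb Z_2)$, the natural strategy is to detect nonvanishing on a small cyclic subgroup: restrict to the cyclic subgroup generated by a carefully chosen lift of a transposition in $S_{2g+2}$ (using Lemma~\ref{lemma:s-action}), where nonsplitness can be read off by squaring and checking that the square has nontrivial image in $M$. In the base case $k = 1$ this recovers exactly the computational verification carried out in the proof of Lemma~\ref{lemma:2-adic} for $g = 3$. For a fully $g$-uniform argument one might exploit the natural inclusion $\hypaddict_{2g+2} \hookrightarrow \hypaddict_{2g+4}$ coming from splitting off two Weierstrass points, using the known small-$g$ cases to seed an induction on $g$ in parallel with the induction on $k$.
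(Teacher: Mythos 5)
This statement is stated in the paper as a \emph{conjecture}: the authors prove it only for $g\in\{2,3\}$ (Lemma~\ref{lemma:2-adic}), by combining a Serre-type argument (generalizing \cite[Lemma 3, Section IV.3.4]{serre1989abelian}) that reduces the whole conjecture to the single finite check $H(8)=\hypaddict_{2g+2}(8)$, with the identity $S_6=\Sp_4(\bz/2\bz)$ for $g=2$ and a machine computation for $g=3$. Your proposal is a research outline rather than a proof, and the gap is exactly at the point where the paper's own argument stops being general: after your (correct) reduction of the inductive step to showing that the extension $1 \to \mf{sp}_{2g}(\FF_2)/K \to \hypaddict_{2g+2}(2^{k+1})/K \to \hypaddict_{2g+2}(2^k) \to 1$ is nonsplit for every proper $S_{2g+2}$-submodule $K$, neither of the two substantive inputs is established. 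The classification of $S_{2g+2}$-submodules of $\mf{sp}_{2g}(\FF_2)$ is only asserted (``I expect only a short list''), and the nonvanishing of the pushed-out extension classes is explicitly deferred (``the hardest part will be''). Moreover, the proposed detection device --- restricting to the cyclic subgroup generated by a lift of a transposition and checking its square --- has no justification that the restriction map on $H^2$ does not kill the class; transpositions can admit involutive lifts in closely related situations, so nonsplitness cannot in general be read off from a single cyclic subgroup, and no computation is offered showing it can here. Since the levels $k\geq 2$ follow from the known Serre-type lemma anyway (this is Remark~\ref{remark:} in the paper), the entire content of the conjecture is concentrated at the bottom levels, precisely the part your outline leaves open; the $g$-induction via $\hypaddict_{2g+2}\hookrightarrow\hypaddict_{2g+4}$ is likewise only a suggestion, with no argument that maximality propagates.

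Two smaller points. Your justification that the conjugation action on $\ker\fy{2^{k+1}}{2^k}\simeq\mf{sp}_{2g}(\FF_2)$ factors through $S_{2g+2}$ is wrong as stated: a $2$-group acting on an $\FF_2$-vector space need not act trivially (it merely has nonzero fixed vectors). The conclusion is nevertheless correct, because conjugation of $\id_{2g}+2^k\Lambda$ by $S$ gives $\id_{2g}+2^kS\Lambda S^{-1}$ modulo $2^{k+1}$, which depends only on $S\bmod 2$. Finally, be aware that a complete argument along your lines would prove something strictly stronger than anything in the paper; as it stands, your text establishes the (valid) cohomological framework but none of the assertions needed to run it.
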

To conclude, we make some remarks on the consequences of this conjecture.
\begin{remark}
	\label{remark:}
	As described in the proof of Lemma~\ref{lemma:reduction-from-adelic-to-mod-l},
	via an easy generalization of the argument given
	in~\cite[Lemma 3, Section IV.3.4]{serre1989abelian},
	to prove
	Conjecture~\ref{conjecture:surjective-mod-2-implies-surjective-2-adically},
	it suffices to check $\hypaddict_{2g+2}(8) = H(8)$.
\end{remark}
\begin{remark}
	\label{remark:conjecture-implies-maximal-adelic}
Note that Conjecture~\ref{conjecture:surjective-mod-2-implies-surjective-2-adically}, if true, has the following useful consequence:
If $C$ is a hyperelliptic curve over $\mathbb Q$ with Jacobian $J$ satisfying
$H_J(2) = S_{2g+2}$ and $H_J(\ell) \supset \Sp_{2g}(\mathbb Z/\ell \mathbb Z)$
for every $\ell \geq 3$, then the $C$ has maximal adelic Galois image.
That is, $[\grouphyp{g}{\QQ} : \mono_J] = 2$.

Indeed, granting Conjecture~\ref{conjecture:surjective-mod-2-implies-surjective-2-adically}, this claim follows immediately from Lemma~\ref{lemma:reduction-from-adelic-to-mod-l}.
\end{remark}
\begin{remark}
	\label{remark:anni-maximal-adelic-examples}
	As follows from Remark~\ref{remark:conjecture-implies-maximal-adelic},
	Conjecture~\ref{conjecture:surjective-mod-2-implies-surjective-2-adically}, would imply that the examples of hyperelliptic
curves with maximal $\bmod$-$\ell$ image constructed in
\cite[Theorem 7.1]{anni2016residual} in fact have maximal adelic image.
\end{remark}

\section*{Acknowledgments}

\noindent This research was supervised by Ken Ono and David Zureick-Brown at the Emory University Mathematics REU and was supported by the National Science Foundation (grant number DMS-1557960). We would like to thank David Zureick-Brown for suggesting the problem that led to the present article and for offering us his invaluable advice and guidance.  We would like to thank
Samuele Anni
and
Vladimir Dokchitser
for helpful correspondence regarding applying their work.
We thank Jackson Morrow and David Zureick-Brown for reading early drafts and providing helpful feedback, and we thank the anonymous referee for providing a number of useful suggestions. We used {\tt Magma} and \emph{Mathematica} for explicit calculations.

\bibliographystyle{alpha}
\bibliography{bibfile}

\end{document}